\newcommand{\R}{\mathbb{R}}
\newcommand{\dint}{\displaystyle\int}
\newcommand\redout{\bgroup\markoverwith
{\textcolor{red}{\rule[0.5ex]{2pt}{0.8pt}}}\ULon}
\theoremstyle{plain}
\newtheorem{theorem}{Theorem}[section]
\newtheorem{conjecture}{Conjecture}[section]
\newtheorem{corollary}[theorem]{Corollary}
\newtheorem{lemma}[theorem]{Lemma}
\newtheorem{proposition}[theorem]{Proposition}
\newtheorem{example}{Example}[section]
\theoremstyle{definition}
\newtheorem{definition}[theorem]{Definition}
\newtheorem{hyp}[theorem]{Hypothesis}
\newtheorem{remark}[theorem]{Remark}
\theoremstyle{remark}
\numberwithin{equation}{section}
\numberwithin{theorem}{section}
\title{\vspace{-1cm}
Irregularity scales for Gaussian processes: Hausdorff dimensions and hitting probabilities}
\date{}
\begin{document}
\maketitle
\begin{center}
         \author{\vspace{-0.5cm} Youssef Hakiki\\
   \small{ Department of mathematics, Faculty of science Semlalia}, \\ Cadi Ayyad University,
   Marrakesh, Morocco, \\
   e-mail\textup{: \texttt{youssef.hakiki@ced.uca.ma}}}
 \end{center} 
 \vspace{0.01cm}
 \begin{center}
     \author{Frederi Viens\\
    \small Department of Statistics, Rice University,\\
    Houston, TX, USA,\\
     e-mail\textup{: \texttt{viens@rice.edu}}}
 \end{center}
\vspace{0.01cm}
\begin{abstract}
\small Let $X$ be a $d$-dimensional Gaussian process in $[0,1]$, where the component are independent copies of a scalar Gaussian process $X_0$ on $[0,1]$ with a given general variance function $\gamma^2(r)=\operatorname{Var}\left(X_0(r)\right)$ and a canonical metric $\delta(t,s):=(\mathbb{E}\left(X_0(t)-X_0(s)\right)^2)^{1/2}$ which is commensurate with $\gamma(t-s)$. Under a weak regularity condition on $\gamma$, referred to below as $\mathbf{(C_{0+})}$, which allows $\gamma$ to be far from H\"older-continuous, we prove that for any Borel set $E\subset [0,1]$, the Hausdorff dimension of the image $X(E)$ and of the graph $Gr_E(X)$ are constant almost surely. Furthermore, we show that these constants can be explicitly expressed in terms of $\dim_{\delta}(E)$ and $d$. However, when $\mathbf{(C_{0+})}$ is not satisfied, the classical methods may yield different upper and lower bounds for the underlying Hausdorff dimensions. This case is illustrated via a class of highly irregular processes known as logBm. Even in such cases, we employ a new method to establish that the Hausdorff dimensions of $X(E)$ and $Gr_E(X)$ are almost surely constant. The method uses the Karhunen-Lo\`eve expansion of $X$ to prove that these Hausdorff dimensions are measurable with respect to the expansion's tail sigma-field. Under similarly mild conditions on $\gamma$, we derive  upper and lower bounds on the probability that the process $X$ can reach the Borel set $F$ in $\mathbb{R}^d$ from the Borel set $E$ in $[0,1]$. These bounds are obtained by considering the Hausdorff measure and the Bessel-Riesz capacity of $E\times F$ in an appropriate metric $\rho_{\delta}$ on the product space, relative to appropriate orders. Moreover, we demonstrate that the dimension $d$ plays a critical role in determining whether $X\lvert_E$ hits $F$ or not. For this purpose, we introduce a further condition, denoted as \textbf{($C_{\ell}$)}, which is satisfied by all relevant examples from \textbf{($C_{0+}$)}. When $E$ is an Ahlfors-David-regular compact set in the metric $\delta$, we obtain precise upper and lower bounds on the hitting probability of $F$ by $X$ from $E$ in terms of Hausdorff measure and capacity in the Euclidean metric, utilizing specific kernels. These bounds facilitate the proof of an undecidability property, by which there are examples of sets $E\times F$ which have the same Hausdorff dimensions relative to $\rho_{\delta}$ but for which one target set $F$ has a positive hitting probability while the other does not.  
\end{abstract}

\textbf{Keywords:} Gaussian process, Karhunen-Lo\`eve expansion, hitting probabilities, 
Hausdorff dimension, capacity.
\vspace{0,2cm}

\textbf{Mathematics Subject Classification} \quad 60J45, 60G17, 28A78, 60G15

\section{Introduction}
This paper studies some fractal properties for Gaussian processes with a general covariance structure. Properties of interest include the Hausdorff dimension of the image sets and the graph sets, and corresponding hitting probabilities.
One of our motivations is to understand better the high path irregularity exhibited by certain Gaussian processes $X$ started from 0. For example the family of processes $X=B^{\gamma}$ defined in \cite{Viens&Mocioalca2005}, which for any given function $\gamma$ on $\R_+$ such that $\gamma^2$ is of class $\mathcal{C}^2$ on $\R_+$, with $\lim_0\gamma=0$, and $\gamma^2$ is increasing and concave near the origin, is defined by the following Volterra representation 
\begin{align}\label{volt repr}
    B^{\gamma}(t):=\dint_0^{t}\sqrt{\left(\frac{d\gamma^2}{dt}\right)(t-s)}dW(s),
\end{align}
where $W$ is a standard Brownian motion. 

In the particular case $\gamma(r):=\log^{-\beta}(1/r)$, where $\beta>1/2$, the process $B^{\gamma}$ is an element of the family of Gaussian processes called logarithmic Brownian motions (logBm). The condition $\beta>1/2$ ensures that $B^{\gamma}$ has continuous paths as guaranteed by the so-called Dudley-Fernique theorem (see for instance \cite{Adler 2}). This one-parameter family of logBm processes spans a wide range of highly irregular continuous Gaussian processes, which are not  H\"older-continuous. For general $\gamma$, the Dudley-Fernique theorem can be used  generically to show that $B^{\gamma}$ admits the function $h:r\mapsto \gamma(r)\log^{1/2}(1/r)$ as a uniform modulus of continuity almost surely, which is an indication of the non-H\"older-continuity of logBm. That property can in turn be established ``by hand''. Indications of how to do so are in Section 2, a full treatment being left to the interested reader. In any case, the logBm scale is instructive since it extends to the edge of continuous processes and beyond in a one-parameter family.


The broader model class defined via the Volterra representation \eqref{volt repr} is interesting and convenient for several reasons. It involves a simple kernel which makes it amenable to calculations. It produces a process $X=B^{\gamma}$  which, while not having stationary increments, has increments which are nonetheless roughly stationary. Proposition 1 in the original reference \cite{Viens&Mocioalca2005} explains how the canonical metric $\delta(s,t)$ of $X$, for $s,t\in \mathbb{R}_+$ is commensurate with $\gamma(t-s)$, for processes which are more irregular than the Wiener process, i.e. as soon as $r=o(\gamma^2(r))$. The variance of the process $X=B^{\gamma}$ at time $t$ is precisely $\gamma^2(t)$, which implies that the process starts at $0$, and that the scale of the process behaves similarly to the popular class of self-similar models, like fractional Brownian motion and related Gaussian processes, for which the variance equals $t^{2H}$ for self-similarity parameter $H$. Note for instance that the process $X=B^{\gamma}$ with $\gamma(r)=r^H$ yields a self-similar process known as the Riemann-Liouville fractional Brownian motion. It is $H$-self-similar, does not have stationary increments, but has increments whose variance is commensurate with the variance $|t-s|^{2H}$ of standard fractional Brownian motion (fBm). Aside from the fBm and logBm scales, many other scales of continuity can be obtained from $B^\gamma$, some of which yield interesting properties when examined from the lens of Hausdorff dimensions, as we will see. For instance, the choice $\gamma(x) = \exp ( - \log^{q} (1/x) )$, introduced at the end of Section 2.1, provides a process which is less irregular than logBm, but is more irregular than any H\"older-continuous process, such as fBm and Riemann-Liouville fBm. Again, this process does not have stationary increments, but it does satisfy the commensurability condition between $\delta$ and $\gamma$ (see Condition $\mathbf{(\Gamma)}$, i.e. the relations \eqref{commensurate} at the start of Section 2), and thus its increments can be deemed roughly stationary. Since this regularity scale defines processes which are intermediate between the extremely irregular logBm, and the H\"older-continuous processes, these  processes provide a good test of our methods' applicability. Interestingly, we will see that those processes share some desirable hitting probability features with H\"older-regular processes, which the logBm processes are too irregular to possess.

Most of the results in the literature about the fractal properties for Gaussian processes do not apply to the case of logBm, or to the processes which are more regular than logBm but not H\"older-continuous. For the question of  hitting probabilities, see for example \cite{Mueller-Tribe2002,Bierme&Xiao}; for the Hausdorff dimension of the image and the graph sets, see \cite{Hawkes}. This inapplicability stems from those references' assumptions which imply some form of H\"older continuity. To wit, the conditions in those references imply that, for some $\alpha\in (0,1)$, we have  $\gamma\left(r\right)\lesssim r^{\alpha}$ near the origin. To make matters more delicate yet, there are many regularity scales between the H\"older continuity scale and the logarithmic scale of logBm mentioned above, the aforementioned case of the choice $\gamma(x) = \exp ( - \log^{q} (1/x) )$ being only one such instance. 
This motivates us to study   
the fractal properties for Gaussian  processes $X$ with more general covariance structure, under flexible conditions which would encompass the entire class of a.s. continuous Volterra processes $B^\gamma$ in \eqref{volt repr}. 
We thus investigate these problems under some general conditions on the standard deviation function $\gamma$ only, with no direct reference to any regularity scale, and no assumption that our processes be given in a particular form such as the Volterra representation \eqref{volt repr}, so that our results may be satisfied by large classes of processes within and/or beyond the Hölder scale. We concentrate our efforts on handling the broadest possible class of processes which satisfy the commensurability condition $\delta(s,t) \asymp \gamma(|t-s|)$, namely Condition $\mathbf{(\Gamma)}$ from relations \eqref{commensurate}.  

By concentrating only on Condition $\mathbf{(\Gamma)}$, i.e. relations \eqref{commensurate}, we are able to relax the restriction of stationarity of increments (see Proposition 5 in \cite{Viens&Mocioalca2005}), and to break away from the confines of Hölder continuity, as illustrated above by the logBm class and other non-H\"older processes. Apart from the paper \cite{Eulalia&Viens2013}, and the original paper \cite{Viens&Mocioalca2005} where logBm was introduced, few authors have studied precise regularity results for Gaussian processes beyond the H\"older (fractional) scale. See \cite{NeufcourtViens} for a study of various regularity classes, some of which interpolate between logBm and fBm, in the context of central limit theorems for Gaussian time series with memory. Recently in \cite{GULISASHVILI}, logBm was proposed as a model for very rough volatility, making the ideas introduced in \cite{Rosenbaum et al} more quantitative when one leaves the H\"older scale. Recently, the logBm was employed to study the $\mathcal{C}^{\infty}$-Regularization of ODEs by noise as in \cite{Har-Perk}, the idea behind using logBm for this purpose is that the local time of logBm is highly regular (it is $\mathcal{C}^{\infty}$ in its space variable) due to the high irregularity of paths of the underlying process.
Another interesting class of Gaussian processes with non-stationary increments, which satisfy relations \eqref{commensurate}, are the evolution-sense solutions of the linear stochastic heat equation, see those studied in \cite{Ouahhabi&Tudor2013, TorresTudorViens}. The processes resulting from the models in those papers have complex H\"older regularity in space and in time, but stochastic heat equations driven by noises with logBm-type behavior or other non-H\"older noises, will have evolution-sense solutions which inherit those non-H\"older regularities. One has every reason to expect that these examples of processes will still satisfy Condition $\mathbf{(\Gamma)}$ (relations \eqref{commensurate}), which can be shown by employing arguments similar to the proof of Proposition 1 in \cite{Viens&Mocioalca2005}. 

These details are omitted, since the purpose of this paper is to remain at a scope which encompasses all these regularity scales simultaneously by requiring only the commensurability Condition $\mathbf{(\Gamma)}$, and interpreting our results via $\gamma$ only, not in reference to any specific scale. To be clear, the Volterra-type processes $B^\gamma$ in \eqref{volt repr} are convenient for generating examples of processes which satisfy Condition $\mathbf{(\Gamma)}$ and other general technical conditions. For instance, that logBm satisfies Condition $\mathbf{(\Gamma)}$ with $l=2$ was established in Proposition 1 in \cite{Viens&Mocioalca2005}. We will use such examples as illustrations, while our theorems and results are stated and established under more general conditions such as Condition $\mathbf{(\Gamma)}$. We now provide a summary of the results which we establish in this paper, and how they are articulated. 

In Section 2, we provide some general hypotheses on $\gamma$, which are important to ensure some desirable properties for the process $X$. 
Some preliminaries on Hausdorff measures, Bessel-Riesz capacities and Hausdorff dimension on $\R_+$ and $\R_+\times \R^d$, in a general context, are also given here. All these preliminaries allow us to provide optimal upper and lower bounds for the Hausdorff dimension of the image $X(E)$ and the graph $Gr_E(X)$, where $E\subset [0,1]$, and for the hitting probabilities estimates, in the sections 3 and 4 respectively. The choice to present results relative to subsets of $[0,1]$ in the time variable, as opposed to another time interval, is arbitrary, and used for convenience.  

Section 2 is also where we recall and establish important results on the process $X$ that imply lower bounds for hitting probabilities, and upper bounds for hitting probabilities and Hausdorff dimensions of images and of graphs. Those results are respectively Lemma \ref{lem two pts LND}, which proves a so-called 2-point local non-determinism property, and Lemma \ref{estim small ball 1}, which is a type of small-ball probability estimate (probability of reaching a small ball in space over a small ball in time of similar diameter). These are proved under the commensurability Condition $\mathbf{(\Gamma)}$, i.e. relations \eqref{commensurate}. Moreover we interpret these results under various general conditions on $\gamma$  which are not hard to check and are satisfied by large classes of regularity scales of interest to us and to others. 
With these tools in hands, and with the additional definitions and basic results recalled in Section 2 about Hausdorff dimensions relative to general metrics, we are able to provide the exact value of the Hausdorff dimension of the image $X(E)$ and the graph $Gr_E(X)$, where $E\subset [0,1]$, in the section 3, under mild regularity conditions which extend far beyond the H\"older scale. Similarly these tools help us provides some optimal lower and upper bounds for hitting probabilities in Section 4. The choice to present results relative to subsets of $[0,1]$ in the time variable, as opposed to another time interval, is arbitrary, and is used for convenience.

We finish this introduction with a detailed narrative description of the main results in Sections 3 and 4 and their ramifications.

Recall that in \cite{Hawkes}, Hawkes resolved the problem of computing the Hausdorff dimension of the image and of the graph of a Gaussian process $X$ with stationary increments, i.e. assuming $\delta(s,t)=\gamma(|t-s|)$, under the strong condition $\hbox{ind}_{*}(\gamma)>0$, where $\hbox{ind}_{*}(\cdot)$ is the lower index, which will be defined in \eqref{def index}. A positive lower index for $\gamma$ implies $\alpha$-H\"older-continuity of the paths of $X$ for all $\alpha \in (0,\hbox{ind}_{*}(\gamma))$. In section 3, we relax those two conditions used by Hawkes. We consider functions $\gamma$ which satisfy a very mild regularity condition: the general condition labeled as $(\mathbf{C_{0+}})$, by which the inequality \eqref{nice condition}) holds for all $\varepsilon \in (0,1)$. Assuming these, using methods from potential theory and covering arguments, we prove in Section 3.1 that for all Borel set $E\subset [0,1]$, the Hausdorff dimension of $X(E)$ and $Gr_E(X)$ are constants almost surely, which are provided explicitly in terms of $\dim_{\delta}(E)$ and $d$, where $\dim_{\delta}(\cdot)$ denotes the Hausdorff dimension associated with the canonical metric $\delta$, and $d$ is the dimension of the ambient image space.  In this same Section 3.1 we also show in Lemma \ref{lem check nice cond} that the condition ``$\hbox{ind}_{*}(\gamma)>0$'' used by Hawkes implies the regularity condition $(\mathbf{C_{0+}})$; however, we also know from Example \ref{exmpl weak cond} that condition $(\mathbf{C_{0+}})$ goes significantly further than ``$\hbox{ind}_{*}(\gamma)>0$'' since  it is satisfied by the aforementioned important regularity class where $\gamma(x) = \exp ( - \log^{q} (1/x) )$, for which $\hbox{ind}_{*}(\gamma)=0$.

On the other hand, in some regularity scales, condition $(\mathbf{C_{0+}})$ fails to hold. Without this condition, the method of using potential theory and covering arguments may lead to different upper and lower bounds for the Hausdorff dimension, both for the image and for the graph of $X$. For instance, in the logBm case, $(\mathbf{C_{0+}})$ fails because \eqref{nice condition} holds only for some, though not all, $\varepsilon \in (0,1)$. 
Therefore, in Section 3.2, we develop a general method that enables us to prove that the Hausdorff dimension of the image and of the graph are almost surely constants, which hold for any continuous Gaussian process $X$. The idea we introduce is to use the Karhunen-Lo\`eve representation of $X$ and to prove that, for any Borel set $E\subset [0,1]$ the Hausdorff dimensions of $X(E)$ and $Gr_E(X)$ are measurable with respect to certain tail sigma-fields, so we can apply  a Kolmogorov zero-one law, showing that these random variables are almost surely constants. These constants depends on $E$, and when $(\mathbf{C_{0+}})$ fails, they  are not given explicitly, but for example, in the scale of logBm, the upper and lower bounds which we obtain with the capacity+chaining method are explicit and become nearly optimal towards the upper end of the logBm scale, i.e. when $\beta\gg 1/2$. To be specific, for instance in the case of the graph's dimension, while Section 3.1 shows using a general argument that Condition $(\mathbf{C_\varepsilon})$ for fixed $\varepsilon$ implies, for an appropriate metric $\rho_{\delta}$ defined in \eqref{parabolic metric}, that $\dim_{\rho_{\delta}}\left(Gr_E(X)\right)$ is bounded below by $\dim_{\delta}(E)$ and above by $ \dim_{\delta}(E)+\varepsilon\,d$, in Section 3.2, in the specific case where $\gamma(r)$ is commensurate with $\log ^{-\beta}(1/r)$, a slightly finer analysis implies that the upper bound can be replaced by $\dim_{\delta}(E)\beta/(\beta-1/2)$. When $\beta$ is large, i.e. towards the higher regularity range of logBm, this is equivalent to $\dim_{\delta}(E)(1+1/(2\beta))$. The factor $\beta/(\beta-1/2)$ is not an improvement over the general result in Section 3.1 on the lower end of the logBm scale, since it explodes when $\beta$ approaches $1/2$, but it is an improvement on the general result when the logarithmic-scale Hausdorff dimension $\dim_{\log}(E)$ is finite (see equation \eqref{dimlog} and following line for the definition and relevant property of $\dim_{\log}(\cdot)$). Indeed, the $\gamma$ of the logBm scale satisfies Condition $(\mathbf{C_{\varepsilon}})$ with $\varepsilon = 1/(2\beta)$ and, noting that $ \dim_{\delta}(E) = \beta^{-1} \dim_{\log}(E)$, where $\dim_{\log}(E)$ is intrinsic to $E$ (i.e. does not depend on $\beta$), thus one only needs to require $\dim_{\log}(E)<\beta\,d$ to get an improved upper bound. This requirement, and the corresponding improvement on the upper bound, which incidentally is dimension-independent, holds for large $\beta$ as soon as $\dim_{\log}(E)$ is finite.

In section 4, our investigation focuses on the hitting-probabilities problem, i.e. estimating the probability of the event $\left\{X(E)\cap F\neq \varnothing \right\}$ where $E\subset [0,1]$ and $F\subset \mathbb{R}^d$ are Borel sets. Assuming that functions $\gamma$ satisfy Condition $(\mathbf{C_\varepsilon})$ for some fixed $\varepsilon \in (0,1)$ and a slightly strengthened concavity condition near the origin (Hypothesis \ref{Hyp2}), again using the capacity+chaining method, we obtain upper and lower bounds on the probability in question in terms of the Hausdorff measures and the Bessel-Riesz capacities of $E \times F$, relative to appropriate metrics and orders. These results are estalished in the first subsection of Section 4. These bounds suggest that, under condition $(\mathbf{C_{0+}})$, the dimension $d$ is a critical value for the dimension of $E \times F$ in the intrinsic metric. 

In the second subsection of Section 4, we do in fact prove that under the slightly stronger condition $(\mathbf{C_{0}})$, we can improve our results quantitatively, by making mild regularity assumptions (Ahlfors-David regularity) on either the set $E$ or the set $F$. We show in this subsection that the aforementioned criticality follows, by proving that, for any process $X$ satisfying a condition $(\mathbf{C_{\ell}})$, defined therein, which is an intermediate condition between the weaker condition $(\mathbf{C_{0+}})$ and the stronger condition $(\mathbf{C_{0}})$, whether or not a set can be reached by $X$ with positive probability cannot be decided when the dimension of $E \times F$ is critical. This condition is satisfied by all our examples of functions $\gamma$ of interest with zero index satisfying $(\mathbf{C}_{0+})$. In particular, the case $\gamma(x)=\exp\left(-\log^{q} (1/x) \right)$ with $q\in (0,1)$ satisfies $(\mathbf{C_{\ell}})$. We provide references in Section 4 and we explain therein how our results improve on prior known criticality studies, where processes $X$ were restricted to being H\"older-continuous and sets $E$ were restricted to being intervals. 

As a final application of our general result on hitting probabilities, in the last subsection of Section 4, we show first, under condition $(\mathbf{C_{0+}})$, that the so-called stochastic co-dimension of $X(E)$ exists and is given by $d-\dim_{\delta}(E)$ under a mild regularity condition on $E$. On the other hand, when condition $(\mathbf{C_{0+}})$ fails to hold, the method may lead to some upper and lower bounds for the hitting probabilities which are not necessarily optimal. We use the logBm case to illustrate this lack of optimality. In this case, the hitting-probabilities estimates do not help to compute the stochastic co-dimension of $X(E)$. However, since we proved in Section 3 that the Hausdorff dimension of $X(E)$ is almost surely constant, denoting this constant by $\zeta(E)$, then it is well within the realm of the possible, under some regularity condition on $E$ (e.g. similar to the Ahlfors-David regularity), that the stochastic co-dimension of $X(E)$ might be equal to $d-\zeta(E)$. This is an open problem at this point, and we do not have a well-developed strategy to resolve it, leaving it as a conjecture.      

\section{Preliminaries}\label{PrelimSec}
This section collects and establishes general facts about Gaussian processes whose variance function $\gamma^2$ is an increasing function starting from $0$,  particularly those whose canonical metric is commensurate with $\gamma$, a property referred to below as Condition $\mathbf{(\Gamma)}$ given by relations \eqref{commensurate}. The key technical estimate for upper bounds on Hausdorff measures of images and graphs is Lemma \ref{lem upper bound} below. It holds without any regularity assumptions on $\gamma$. We provide mild technical conditions which imply various levels of regularity, including corresponding estimates of the integral $f_{\gamma}$ featured in this lemma. Examples illustrating the various regularity behaviors are provided. Lemma \ref{lem two pts LND} is a two-point local non-determinism property which will help us establish lower bounds on hitting probabilities. It assumes a mild concavity property near the origin, referred to below as Hypothesis \ref{Hyp2}.

The second part of this section provides the definitions of Hausdorff measures and Riesz-Bessel capacities needed to understand and quantify the results in this paper. Since we work beyond H\"older regularity scales, notions of capacities and Hausdorff measures with respect to power functions apply when modified to be relative to non-H\"older metrics, using balls and distances relative to our processes' regularity scales, e.g. the processes' canonical metrics rather than powers of Euclidean distance; Hausdorff dimensions are thus relative to those metrics. General results expressing equivalent formulations of these Hausdorff dimensions are collected and justified in this section. Some of our results later in the paper will also relate to Euclidean-metric Hausdorff dimensions. 

\subsection{Gaussian processes with general variance function and commensurate squared canonical metric}
 In this entire paper we will work with $ \{X_0(t),t\in \mathbb{R}_+\}$ a real-valued mean-zero continuous Gaussian process defined on a complete  probability space $(\Omega,\mathcal{F},\mathbb{P})$, with canonical metric $\delta$ of $X_0$ on $(\mathbb{R}_+)^2$ defined by 
 $$ \delta(s,t):=\left(\mathbb{E}(X_0(s)-X_0(t))^2\right)^{1/2}.
 $$ 
 Let $\gamma$ be continuous increasing function on $\mathbb{R}_+$ (or possibly only on a neighborhood of $0$ in $\mathbb{R}_+$), such that $ \lim_{0+} \gamma =0$. We assume the following throughout, which we refer to as Condition $\mathbf{(\Gamma)}$: for some constant $l\geq 1$ we have, for all $s,t \in \mathbb{R}_+$, or possibly only all $s,t$ in the neighborhood of $0$ where $\gamma$ is defined,
\begin{align}
\mathbf{(\Gamma) :} \left\{\begin{array}{rcl}
    &\mathbb{E}\left(X_0(t)\right)^2=\gamma^2(t) \\
    \\
   & \text{and} \\
   \\
     &1/\sqrt{l}\,\gamma\left(|t-s|\right)\leq \delta(t,s)\leq \sqrt{l}\,\gamma(|t-s|).    
\end{array}\right.\label{commensurate}
\end{align}
Now, we consider the $\mathbb{R}^d$-valued process $X=\{X(t): t\in \mathbb{R}_+\}$ defined by 
\begin{align}\label{d iid copies}
    X(t)=(X_1(t),...,X_d(t)),\quad t\in \mathbb{R}_+,
\end{align}
where $X_1,...,X_d$ are independent copies of $X_0$. Let us consider the following hypotheses
 
 \begin{hyp}\label{H1} The increasing function $\gamma$ is concave in a neighborhood of the origin, and for all $0<a<\infty$, there exists $\varepsilon>0$ such that $\gamma^{\prime}(\varepsilon +)>\sqrt{l}\, \gamma^{\prime}(a{-})$.
 \end{hyp}
 
 \begin{hyp} \label{Hyp2} For all $0<a<b<\infty$, there exists $\varepsilon>0$ and $\mathsf{c}_0\in (0,1/\sqrt{l})$, such that
\begin{align}\label{Condition Hyp2}
     \gamma(t)-\gamma(s)\leq \mathsf{c}_0\gamma(t-s) \quad \text{ for all $\,s,t\in [a,b]\,$ with $\,0<t-s \leq \varepsilon$}.
\end{align}
 \end{hyp}
 The following lemma shows that Hypothesis \ref{H1} implies Hypothesis \ref{Hyp2}, and under the strong but typical condition $\gamma^{\prime}(0+)=\infty$, the constant $\mathsf{c}_0$ in \eqref{Condition Hyp2} can be chosen arbitrarily small. The proof is given in \cite{Eulalia&Viens2013}.
 \begin{lemma}\label{lem Hyp1 implies Hyp2}
     Hypothesis \ref{H1} implies Hypothesis \ref{Hyp2}. Moreover if $\gamma^{\prime}(0+)=+\infty$, then for all $0<a<b<\infty$ and all $\mathsf{c}_0>0$, there exists $\varepsilon>0$ such that 
     $$
     \gamma(t)-\gamma(s)\leq \mathsf{c}_0\, \gamma(t-s) \quad \text{ for all $\,t,s \in [a,b]\,$ with $\,0<t-s<\varepsilon$}.
     $$
 \end{lemma}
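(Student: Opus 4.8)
The plan is to reduce the statement to two elementary monotonicity properties of a function that is increasing and concave on an interval $[0,\rho]$ and vanishes at $0$, namely: its left and right derivatives $\gamma'(x-),\gamma'(x+)$ are finite on $(0,\rho)$, are nonincreasing in $x$, and satisfy $\gamma'(x+)\le\gamma'(x-)$; and for $0<u<v\le\rho$ the chord slope $(\gamma(v)-\gamma(u))/(v-u)$ lies between $\gamma'(v-)$ and $\gamma'(u+)$. From these I extract the two one-sided estimates that drive the argument. Letting $u\downarrow0$ in the chord property and using monotonicity, $\gamma(r)\ge\gamma'(r-)\,r\ge\gamma'(\varepsilon+)\,r$ for all $0<r\le\varepsilon\le\rho$. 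And for $s<t$ in a compact interval $[a,b]$ on which $\gamma$ is concave, the chord slope over $[s,t]$ is at most $\gamma'(s+)\le\gamma'(a-)$, so $\gamma(t)-\gamma(s)\le\gamma'(a-)(t-s)$; in general, when $[a,b]$ is not contained in the concavity neighbourhood, one only needs, and has, the cruder bound $\gamma(t)-\gamma(s)\le M_{a,b}(t-s)$ with $M_{a,b}:=\sup_{[a,b]}\gamma'<\infty$, finite under the standing regularity of $\gamma$ away from the origin.

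To show that Hypothesis \ref{H1} implies Hypothesis \ref{Hyp2}, I would fix $0<a<b<\infty$. Hypothesis \ref{H1} supplies $\varepsilon_0>0$ with $\gamma'(\varepsilon_0+)>\sqrt{l}\,\gamma'(a-)$; since $\gamma'(\cdot+)$ is nonincreasing the same strict inequality holds for every smaller argument, so I set $\varepsilon:=\min(\varepsilon_0,\rho)$. Then for all $s,t\in[a,b]$ with $0<t-s\le\varepsilon$, using the two estimates above,
\[
\gamma(t)-\gamma(s)\ \le\ \gamma'(a-)\,(t-s)\ \le\ \frac{\gamma'(a-)}{\gamma'(\varepsilon+)}\,\gamma(t-s),
\]
and $\mathsf{c}_0:=\gamma'(a-)/\gamma'(\varepsilon+)$ lies in $(0,1/\sqrt{l})$ by the choice of $\varepsilon$ (positivity being clear since $\gamma$ is increasing). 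This is precisely \eqref{Condition Hyp2}.

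For the quantitative refinement, assume in addition $\gamma'(0+)=+\infty$, and fix $0<a<b<\infty$ together with an arbitrary $\mathsf{c}_0>0$. Now $M_{a,b}=\sup_{[a,b]}\gamma'$ is a fixed finite constant, whereas $\gamma'(0+)=+\infty$ and the monotonicity of $\gamma'(\cdot+)$ force $\gamma'(\varepsilon+)\to+\infty$ as $\varepsilon\downarrow0$. Hence I can pick $\varepsilon\le\rho$ small enough that $\gamma'(\varepsilon+)>M_{a,b}/\mathsf{c}_0$; for $s,t\in[a,b]$ with $0<t-s<\varepsilon$ the lower estimate then gives $\gamma(t-s)\ge\gamma'(\varepsilon+)(t-s)>(M_{a,b}/\mathsf{c}_0)(t-s)$, whence $\gamma(t)-\gamma(s)\le M_{a,b}(t-s)<\mathsf{c}_0\,\gamma(t-s)$, which is the claimed inequality.

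There is no deep obstacle here: the content is entirely in the two chord-slope inequalities of the first paragraph, which are classical for concave functions, and the rest is bookkeeping with the quantifiers on $(a,b,\varepsilon,\mathsf{c}_0)$. The one point that needs a little care is the upper estimate for $\gamma(t)-\gamma(s)$ when the compact interval $[a,b]$ reaches beyond the neighbourhood of the origin on which $\gamma$ is concave: there one replaces $\gamma'(a-)$ by $\sup_{[a,b]}\gamma'$, still finite under the assumed regularity of $\gamma$, a substitution that is immaterial for the quantitative second statement and that agrees with the form stated in Hypothesis \ref{H1} whenever $[a,b]$ lies inside the concavity range. The complete details are carried out in \cite{Eulalia&Viens2013}.
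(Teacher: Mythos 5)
The paper does not reproduce a proof of this lemma (it defers to \cite{Eulalia&Viens2013}), and your argument is the standard one: bound $\gamma(t)-\gamma(s)$ from above by a chord-slope/mean-value estimate on $[a,b]$, bound $\gamma(t-s)$ from below by $\gamma'(\varepsilon+)\,(t-s)$ using concavity of $\gamma$ near the origin together with $\gamma(0)=0$, and compare the resulting constants against $1/\sqrt{l}$ (resp.\ against an arbitrary $\mathsf{c}_0$ when $\gamma'(0+)=+\infty$, since then $\gamma'(\varepsilon+)\to\infty$ absorbs any fixed bound on $\sup_{[a,b]}\gamma'$). This is correct, and you rightly isolate the only delicate point, namely that the upper estimate requires the derivative of $\gamma$ to be controlled on all of $[a,b]$ (via concavity there, or a finite supremum), which is implicit in the way Hypothesis \ref{H1} is phrased and is indeed immaterial for the second, quantitative statement.
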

 The following lemma is also proven in \cite{Eulalia&Viens2013}. 
\begin{lemma}\label{lem two pts LND}
Assume Hypothesis \ref{Hyp2}. Then for all $0<a<b<\infty$, there exist constants $\varepsilon>0$ and $\mathsf{c}_1>0$ depending only on $a,b$, such that for all $s,t\in [a,b]$ with $|t-s|\leq \varepsilon$,
\begin{align}\label{two pts LND}
    Var\left(X_0(t)\lvert X_0(s)\right)\geq \mathsf{c}_1\,\delta^2(s,t) \geq (\mathsf{c}_1/l)\,\gamma^2(|t-s|).
\end{align}
\end{lemma}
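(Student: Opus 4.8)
The plan is to start from the Gaussian regression formula. Writing $\rho := \mathbb{E}\left[X_0(s)X_0(t)\right]$ and using the first line of Condition $\mathbf{(\Gamma)}$ that $\mathbb{E}\left[X_0(r)^2\right]=\gamma^2(r)$, we have
\[
Var\left(X_0(t)\mid X_0(s)\right)=\gamma^2(t)-\frac{\rho^2}{\gamma^2(s)}=\frac{\big(\gamma(t)\gamma(s)-\rho\big)\big(\gamma(t)\gamma(s)+\rho\big)}{\gamma^2(s)},
\]
where both factors in the numerator are $\geq 0$ by Cauchy--Schwarz. It therefore suffices to bound each of these two factors from below and $\gamma^2(s)$ from above, uniformly for $s,t\in[a,b]$ with $|t-s|$ small; the last inequality in the statement then follows at once from the lower bound $\delta^2(s,t)\geq\gamma^2(|t-s|)/l$ in $\mathbf{(\Gamma)}$.

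For the first factor I would use the elementary identity $\delta^2(s,t)=\gamma^2(t)+\gamma^2(s)-2\rho=(\gamma(t)-\gamma(s))^2+2\big(\gamma(t)\gamma(s)-\rho\big)$, i.e. $\gamma(t)\gamma(s)-\rho=\tfrac12\big(\delta^2(s,t)-(\gamma(t)-\gamma(s))^2\big)$. Taking $s<t$ without loss of generality, Hypothesis \ref{Hyp2} supplies $\varepsilon_1>0$ and $\mathsf{c}_0\in(0,1/\sqrt{l})$ with $0\leq\gamma(t)-\gamma(s)\leq\mathsf{c}_0\,\gamma(t-s)$ for $0<t-s\leq\varepsilon_1$, hence $(\gamma(t)-\gamma(s))^2\leq\mathsf{c}_0^2\gamma^2(|t-s|)\leq\mathsf{c}_0^2 l\,\delta^2(s,t)$ by the upper bound in $\mathbf{(\Gamma)}$. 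Since $\mathsf{c}_0^2l<1$, this yields $\gamma(t)\gamma(s)-\rho\geq\tfrac12(1-\mathsf{c}_0^2l)\,\delta^2(s,t)$.

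For the second factor I would use that $s,t$ stay away from $0$: since $\gamma$ is increasing, $\gamma(t)\gamma(s)\geq\gamma^2(a)>0$, and $\rho=\tfrac12\big(\gamma^2(s)+\gamma^2(t)-\delta^2(s,t)\big)\geq\gamma^2(a)-\tfrac{l}{2}\gamma^2(|t-s|)$ by the upper bound in $\mathbf{(\Gamma)}$. Because $\lim_{0+}\gamma=0$, pick $\varepsilon_2>0$ with $l\,\gamma^2(\varepsilon_2)\leq\gamma^2(a)$; then $\rho\geq\tfrac12\gamma^2(a)>0$ whenever $|t-s|\leq\varepsilon_2$, so $\gamma(t)\gamma(s)+\rho\geq\gamma^2(a)$. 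Combining the two lower bounds with $\gamma^2(s)\leq\gamma^2(b)$, and setting $\varepsilon:=\min(\varepsilon_1,\varepsilon_2)$ and $\mathsf{c}_1:=(1-\mathsf{c}_0^2l)\,\gamma^2(a)/\big(2\gamma^2(b)\big)$, gives $Var\left(X_0(t)\mid X_0(s)\right)\geq\mathsf{c}_1\,\delta^2(s,t)$, and then $\mathsf{c}_1\delta^2(s,t)\geq(\mathsf{c}_1/l)\gamma^2(|t-s|)$ by $\mathbf{(\Gamma)}$. The only substantive point is the lower bound on $\gamma(t)\gamma(s)-\rho$: this is precisely where the strict inequality $\mathsf{c}_0<1/\sqrt{l}$ from Hypothesis \ref{Hyp2} is needed, so that the variance increment $|\gamma(t)-\gamma(s)|$ is genuinely dominated by the canonical distance $\delta(s,t)$ and the numerator cannot degenerate faster than $\delta^2(s,t)$; everything else is bookkeeping with the two-sided bound $\mathbf{(\Gamma)}$ and the fact that the variances remain in $[\gamma^2(a),\gamma^2(b)]$ on $[a,b]$.
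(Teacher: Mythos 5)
Your proof is correct, and it follows essentially the same route as the source the paper cites for this lemma (\cite{Eulalia&Viens2013}): the Gaussian regression formula, the factorization $\gamma^2(t)\gamma^2(s)-\rho^2=(\gamma(t)\gamma(s)-\rho)(\gamma(t)\gamma(s)+\rho)$, the identity $\gamma(t)\gamma(s)-\rho=\tfrac12(\delta^2(s,t)-(\gamma(t)-\gamma(s))^2)$ combined with Hypothesis \ref{Hyp2} and the strict inequality $\mathsf{c}_0^2 l<1$, and the fact that the variances stay in $[\gamma^2(a),\gamma^2(b)]$ away from the origin. The only cosmetic quibble is that the bound $\gamma^2(|t-s|)\leq l\,\delta^2(s,t)$ comes from the lower (not upper) inequality on $\delta$ in Condition $\mathbf{(\Gamma)}$; the mathematics is unaffected.
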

\noindent Condition \eqref{two pts LND} is called {\it two-point local non-determinism}.

We denote by $B_{\delta}(t,r)=\{s\in \mathbb{R}_+: \delta(s,t)\leq r\}$ the closed ball of center $t$ and radius $r$ in the metric $\delta$. 
The following lemma is useful for the proof of the upper bounds for the Hausdorff dimension in Theorem \ref{Hausd dim image}. It is an improvement of both of proposition 3.1. and proposition 4.1. in \cite{Eulalia&Viens2013}. The proof that we give here uses similar arguments to those of \cite[Proposition 4.4.]{Dal-Khosh-Eulal 2007}.
\begin{lemma}\label{lem upper bound}
Assume that $\gamma$ satisfies the commensurability condition $\mathbf{(\Gamma)}$, i.e. relations \eqref{commensurate}. Let $0<a<b<\infty$, and $I:=[a,b]$.
Then for all $M>0$, there exist positive constants $\mathsf{c}_2$ and $r_0$ such that for all $r\in (0,r_0)$, $t\in I$  and $z\in [-M,M]^d$ we have 
\begin{equation}\label{estim small ball 1}
\mathbb{P}\left\{\inf _{ s\in B_{\delta}(t,r)\cap I}\|X(s)-z\| \leqslant r\right\} \leqslant \mathsf{c}_2 (r+f_{\gamma}(r))^{d},
\end{equation}
		where $\|\cdot\|$ is the Euclidean metric, and $f_{\gamma}$ is defined by $$f_{\gamma}(r):= \dint_0^{1/2}\frac{\gamma\left(\gamma^{-1}({l}^{1/2}\,r)\,y\right)}{y \sqrt{\log(1/y)}}dy.$$ 
\end{lemma}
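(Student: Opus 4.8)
The plan is to run a small‑ball argument in the spirit of \cite[Proposition 4.4]{Dal-Khosh-Eulal 2007}: bound the event in \eqref{estim small ball 1} by a statement about $X$ at the centre $t_0$ of the ball plus the oscillation of $X$ over the ball, decouple that oscillation from $X(t_0)$, and then integrate a Gaussian tail estimate for the oscillation against the uniformly bounded Gaussian density of $X(t_0)$. Concretely, I would fix $t_0\in I$ and $z\in[-M,M]^d$, set $\tau=\tau(r):=\gamma^{-1}(\sqrt l\,r)$, and note via $\mathbf{(\Gamma)}$ that $B_\delta(t_0,r)\cap I\subseteq[t_0-\tau,t_0+\tau]\cap I$, which lies in the region where $\mathbf{(\Gamma)}$ holds once $r<r_0$ is small. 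If the event occurs then $\|X(t_0)-z\|\le r+O_r$ with $O_r:=\sup_{s\in B_\delta(t_0,r)\cap I}\|X(s)-X(t_0)\|$. To separate $O_r$ from $X(t_0)$, I would decompose each coordinate as $X_j(s)=\beta_j(s)X_j(t_0)+R_j(s)$ with $\beta_j(s):=\mathbb E[X_j(s)X_j(t_0)]/\gamma^2(t_0)$ and $R_j(s)$ Gaussian, independent of $X_j(t_0)$; since $R_j(t_0)=0$, $R_j(s)$ is the $L^2$‑projection of $X_j(s)-X_j(t_0)$ onto $\{X_j(t_0)\}^{\perp}$, so $\mathbb E[R_j(s)^2]\le\delta^2(s,t_0)\le r^2$, $\mathbb E[(R_j(s)-R_j(s'))^2]\le\delta^2(s,s')$, and $|\beta_j(s)-1|=|\mathbb E[X_j(t_0)(X_j(s)-X_j(t_0))]|/\gamma^2(t_0)\le \gamma(b)\,r/\gamma^2(a)$. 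Hence $O_r\le \mathsf c\,r\,\|X(t_0)\|+\widetilde O_r$ with $\mathsf c:=\gamma(b)/\gamma^2(a)$ and $\widetilde O_r:=\sup_{s\in B_\delta(t_0,r)\cap I}\|R(s)\|$ \emph{independent} of $X(t_0)$.

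Then I would assemble two ingredients. First, $X(t_0)\sim N(0,\gamma^2(t_0)I_d)$ with $\gamma^2(a)\le\gamma^2(t_0)\le\gamma^2(b)$, so its density $p_{t_0}$ is bounded by $\mathsf c_3:=(2\pi\gamma^2(a))^{-d/2}$ uniformly in $t_0\in I$. Second, viewing $\widetilde O_r$ as the supremum of the centred Gaussian field $(s,\theta)\mapsto\langle\theta,R(s)\rangle$ over $(B_\delta(t_0,r)\cap I)\times S^{d-1}$, whose supremal variance is $\sup_s\mathbb E[R_j(s)^2]\le r^2$, Borell--TIS gives $\mathbb P(\widetilde O_r\ge\mathbb E[\widetilde O_r]+u)\le e^{-u^2/(2r^2)}$; and from $\|R(s)\|\le\sum_j|R_j(s)|$, Dudley's inequality, and the fact that the canonical metric of each $R_j$ is dominated by $\delta$, one gets $\mathbb E[\widetilde O_r]\le \mathsf c_4\int_0^{2r}\sqrt{\log N(B_\delta(t_0,r)\cap I,\delta,\eta)}\,d\eta$. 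Since a $\delta$‑ball of radius $\eta$ about $u$ contains $[u-\gamma^{-1}(\eta/\sqrt l),u+\gamma^{-1}(\eta/\sqrt l)]$, we have $N(B_\delta(t_0,r)\cap I,\delta,\eta)\le 1+\tau/\gamma^{-1}(\eta/\sqrt l)$; the substitution $\eta=\sqrt l\,\gamma(\tau y)$ and a Lebesgue--Stieltjes integration by parts (the boundary term at $y=0^+$ being nonnegative, hence only helpful for an upper bound) turn this into $\mathbb E[\widetilde O_r]\le \mathsf c_5\,(r+f_\gamma(r))$.

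Putting the pieces together, conditioning on $X(t_0)=y$ and using the independence of $\widetilde O_r$,
\[
\mathbb P\Big\{\inf_{s\in B_\delta(t_0,r)\cap I}\|X(s)-z\|\le r\Big\}\le \mathbb P\{\|X(t_0)-z\|\le r\}+\int_{\mathbb R^d}p_{t_0}(y)\,\mathbf 1_{\{\|y-z\|>r\}}\,\mathbb P\big(\widetilde O_r\ge \|y-z\|-r-\mathsf c\,r|y|\big)\,dy ,
\]
the first term being $\le \mathsf c_3\,|B(0,1)|\,r^d$. Into the integral I would insert the Borell--TIS bound and split on $|y|\le\Lambda_M$ versus $|y|>\Lambda_M$ for a threshold $\Lambda_M$ of order $M\sqrt d$: on the large‑$|y|$ part one checks $\|y-z\|-r-\mathsf c\,r|y|-\mathbb E[\widetilde O_r]\ge|y|/4$ for $r$ small, so this contributes $\le \mathsf c_3\,r^d e^{-c/r^2}$, negligible; on the bounded‑$|y|$ part $\mathsf c\,r|y|+\mathbb E[\widetilde O_r]\le\Lambda:=\mathsf c_6(M)\,(r+f_\gamma(r))$, so it is at most $\mathsf c_3\int_{\mathbb R^d}\exp\!\big(-(\|y-z\|-\Lambda)_+^2/(2r^2)\big)\,dy$, which by polar coordinates about $z$ and $\int_0^\infty e^{-v^2/2}v^{d-1}\,dv<\infty$ is $\le \mathsf c_7(M)\,\Lambda^d$. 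Summing the three contributions and using $r\le r+f_\gamma(r)$ gives \eqref{estim small ball 1}, with $\mathsf c_2$ and $r_0$ depending only on $M$ (and on $a,b,l,d$ and $\gamma$).

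The step I expect to be the real obstacle is this last integration. A crude version that replaces $O_r$ by a deterministic threshold valid with probability $1-\eps$ forces $\eps\asymp r^d$, hence a threshold of order $r\sqrt{\log(1/r)}$, producing a bound worse by a factor $(\log(1/r))^{d/2}$ than the claimed $(r+f_\gamma(r))^d$; keeping the full Gaussian tail of the \emph{independent} oscillation $\widetilde O_r$ and integrating it against the bounded density of $X(t_0)$ — so that the convergence of $\int_0^\infty e^{-v^2/2}v^{d-1}dv$ absorbs the would‑be logarithmic loss — is what makes the estimate sharp. A secondary technical point is the change of variables plus integration by parts identifying the Dudley entropy integral with $f_\gamma(r)$ up to an $O(r)$ error; since $\gamma$ is assumed only continuous and increasing this must be done in the Lebesgue--Stieltjes sense, with some care at the endpoint $y=0$.
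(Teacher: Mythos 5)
Your proof is correct, and most of it coincides with the paper's argument: the regression decomposition $X_j(s)=c(s,t_0)X_j(t_0)+R_j(s)$ with $R$ independent of $X(t_0)$, the bound $|1-c(s,t_0)|\le \delta(s,t_0)/\gamma(a)$, and the entropy estimate $\mathbb{E}\bigl[\sup_s\|R(s)\|\bigr]\lesssim r+f_\gamma(r)$ (your Dudley-plus-change-of-variables computation is exactly the content of the Marcus--Rosen lemma the paper invokes) all appear there. Where you genuinely diverge is in how the exponent $d$ is produced. The paper first reduces to $d=1$ via the inclusion $\{\inf_s\|X(s)-z\|\le r\}\subseteq\bigcap_{i=1}^{d}\{\inf_s|X_i(s)-z_i|\le r\}$ and the independence of the coordinate processes; in one dimension the final step is then elementary: conditioning on the oscillation $Z(t,r)$, which is independent of $X_0(t)$, the uniformly bounded density of $X_0(t)$ gives a bound that is \emph{linear} in $r+Z(t,r)$, so only the first moment $\mathbb{E}[Z(t,r)]$ is needed and no concentration inequality enters at all. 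You instead stay in $\mathbb{R}^d$ and integrate the Borell--TIS tail of the independent oscillation against the bounded $d$-dimensional density of $X(t_0)$; this does work and, as you observe, correctly avoids the $(\log(1/r))^{d/2}$ loss that a union-bound-style truncation would incur. The price is that you need Borell--TIS and the polar-coordinate integration, and you should be slightly more careful than stated with the threshold $\Lambda_M$: since $z$ ranges over $[-M,M]^d$, you need $\Lambda_M$ to exceed $M\sqrt d$ by a definite factor (say $\Lambda_M=5M\sqrt d$) so that $\|y-z\|\ge \tfrac45|y|$ on the outer region and the claimed lower bound $\|y-z\|-r-\mathsf{c}\,r|y|-\mathbb{E}[\widetilde O_r]\gtrsim |y|$ actually holds for small $r$. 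In short, both routes reach \eqref{estim small ball 1}; the paper's reduction to $d=1$ is the cheaper path, while yours is a self-contained multivariate version of the same idea.
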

\begin{proof}
We begin by observing that, for all $M>0$ and $z=(z_1,\ldots,z_d)\in [-M,M]^d$, we have 
$$
\left\{\inf_{ s\in B_{\delta}(t,r)\cap I}\|X(s)-z\| \leqslant r\right\}\subseteq \bigcap_{i=1}^{d} \left\{\inf_{ s\in B_{\delta}(t,r)\cap I}\lvert X_i(s)-z_i\rvert \leqslant r\right\}.  
$$
Then since the coordinate processes of $X$ are independent copies of $X_0$, it is sufficient to prove  \eqref{estim small ball 1} for $d=1$. Note that for any $s,t\in I$, we have
\begin{align}
\mathbb{E}\left(X_{0}(s) \mid X_{0}(t)\right)=\frac{\mathbb{E}\left(X_{0}(s) X_{0}(t)\right)}{\mathbb{E}\left(X_{0}(t)^{2}\right)} X_{0}(t):=c(s, t) X_{0}(t).
\end{align}
This implies that the Gaussian process $(R(s))_{s\in I}$ defined by 
\begin{align}
    R(s):=X_{0}(s)-c(s, t) X_{0}(t),\label{process minus projection}
\end{align}
is uncorrelated with and thus independent of $X_{0}(t)$, since these two processes are jointly Gaussian. Let
$$
Z(t, r):=\sup _{s \in B_{\delta}(t, r) \cap I}\left|X_{0}(s)-c(s, t) X_{0}(t)\right|.
$$
Then
\begin{align}
\begin{aligned}
\mathbb{P}&\left\{\inf _{s \in B_{\delta}(t, r) \cap I}\left|X_{0}(s)-z_0\right| \leq r\right\} \\
&\leq \mathbb{P}\left\{\inf _{s \in B_{\rho}(t, r) \cap I}\left|c(s, t)\left(X_{0}(t)-z_0\right)\right| \leq r+Z(t, r)+\sup _{s \in B_{\delta}(t, r) \cap I}|(1-c(s, t)) z_0|\right\}
\end{aligned}\label{upper bound 1}
\end{align}
By the Cauchy-Schwarz inequality and relations \eqref{commensurate}, we have for all $s, t \in I$,
\begin{align}\label{estim correlation}
    \begin{aligned}
        |1-c(s, t)|&=\frac{\left|\mathbb{E}\left[X_{0}(t)\left(X_{0}(t)-X_{0}(s)\right)\right]\right|}{\mathbb{E}\left(X_{0}(t)^{2}\right)} \\
        &\leq \frac{\left(\mathbb{E}( X_0(t))^2\right)^{1/2}\left(\mathbb{E}( X_0(t)-X_0(s))^2\right)^{1/2}}{\mathbb{E}\left(X_{0}(t)^{2}\right)}= \frac{\delta(s,t)}{\gamma(t)}\\
        &\leq\, \mathsf{c}_3\,\delta(s,t),
    \end{aligned}
\end{align}
where $\mathsf{c}_{3}=(\gamma(a))^{-1}$. Let $r_0:=1/2\mathsf{c}_3$, then \eqref{estim correlation} implies that for all $0<r<r_0$ and $s\in B_{\delta}(t,r)\cap I$, we have $1/2\leq c(s,t)\leq 3/2$. Furthermore, for $0<r\leq r_0$, $s\in B_{\delta}(t,r)$, and $z_0\in [-M,M]$, we have 
$$
|(1-c(s,t))z_0|\leq\,\mathsf{c}_3\,M\,r.
$$
Combining this inequality with \eqref{upper bound 1}, we derive that
\begin{align}\label{bnd small ball via regression}
\begin{aligned}
    \mathbb{P}\left\{\inf _{s \in B_{\delta}(t, r) \cap I}\left|X_{0}(s)-z\right| \leq r\right\}&\leq \mathbb{P}\left\{\left|X_0(t)-z\right|\leq 2\left(\mathsf{c}_3\,M+1\right)r+2 Z(t,r)\right\}\\
    &\leq \mathsf{c}_4\left(r+\mathbb{E}\left[Z(t,r)\right]\right),
\end{aligned}
\end{align}
for all $z_0\in [-M,M]$ and $0<r<r_0$, where the constant $\mathsf{c}_4$ depends on $M$, $a$, $b$, $l$ and $\mathsf{c}_3$ only. The last inequality follow from the independence between $X_0(t)$ and $Z(t,r)$. 

Now we bound $\mathbb{E}\left[Z(t,r)\right]$. Indeed, we have 
\begin{align}
    Z(t,r)\leq  Z_{1}(t,r)+Z_{2}(t,r),
\end{align}
where 
\begin{align*}
    \begin{aligned}
        Z_{1}(t,r)&:=\lvert X_0(t)\rvert \sup_{s\in B_{\delta}(t,r)\cap I}\lvert 1-c(s,t)\rvert\\
        Z_{2}(t,r)&:=\sup_{s\in B_{\delta}(t,r)\cap I}\lvert X_0(s)-X_0(t)\rvert.
    \end{aligned}
\end{align*}
Using \eqref{estim correlation} and Cauchy-Schwartz inequality we get that 
\begin{align}\label{bnd Z1}
\mathbb{E}\left[Z_{1}(t,r)\right]\leq \mathsf{c}_5\,r,
\end{align}
where $\mathsf{c}_5:=\gamma(b)/\gamma(a)$. Recall that relations \eqref{commensurate} ensure that $B_{\delta}(t,r)\subseteq \{s\in \R_+\,:\, |t-s|\leq \gamma^{-1}({l}^{1/2}\,r) \}$. Therefore
$$
Z_{2}(t,r)\leq \sup_{\substack{|t-s|\leq \gamma^{-1}({l}^{1/2}\,r)\\ s\in I}}\lvert X_0(t)-X_0(s)\rvert .
$$
Now, using the fact that $ \delta(s,t)\leq \sqrt{l} \gamma(|t-s|)$ then \cite[Lemma 7.2.2]{Marcus-Rosen} ensures that 
\begin{align}\label{bnd Z2}
\begin{aligned}
\mathbb{E}\left[Z_2(t,r)\right]&\leq\, \mathbb{E}\left[\sup_{\substack{|t-s|\leq \gamma^{-1}({l}^{1/2}\,r)\\ s\in I}}\lvert X_0(t)-X_0(s)\rvert\right]\\
&\leq\, \mathsf{c}_6\, \left(\gamma\left(\gamma^{-1}({l}^{1/2}\,r)\right)+\dint_0^{1/2} \frac{\gamma\left(\gamma^{-1}({l}^{1/2}r)\,y\right)}{y\, \log^{1/2}(1/y)}dy\right)\\
&\leq \, \mathsf{c}_7\, \left(r+f_{\gamma}(r)\right),
\end{aligned}
\end{align}
where $\mathsf{c}_{6}$ is a universal constant which depends on $l$ only, and $\mathsf{c}_7=\sqrt{l}\,\mathsf{c}_6$. Combining \eqref{bnd small ball via regression},...,\eqref{bnd Z2} the desired upper bound \eqref{estim small ball 1} follows immediately.
\end{proof}
\noindent Lemma \ref{lem upper bound} is quantitatively efficient when $r$ and $f_\gamma(r)$ are of the same order as $r\to 0$. The following condition \textbf{$(\mathbf{C_0})$} describes this situation:

\textbf{$(\mathbf{C_0})$}: 
There exist two constants $\mathsf{c}_8>0$ and $x_0\in (0,1)$ such that 
\begin{align}\label{condition raisonable}
\int_{0}^{1 / 2} \gamma(x y) \frac{d y}{y \sqrt{\log (1 / y)}} \leq \mathsf{c}_8\, \gamma(x)\quad \text{ for all $x\in [0,x_0]$}.
\end{align}
\begin{corollary}\label{cor estim small ball}
If $\gamma$ satisfies the condition \textbf{$(\mathbf{C_0})$}, then for all $M>0$, there exists some constant $\mathsf{c}_9$ depending on $\gamma$, $I$, $r_0$, $x_0$ and $M$, such that for all $z\in [-M,M]^d$ and for all $r\in (0,r_0 \wedge \gamma(x_0))$ we have 
\begin{align}
\mathbb{P}\left\{\inf _{ s\in B_{\delta}(t,r)\cap I}\|X(s)-z\| \leqslant r\right\} \leqslant \mathsf{c}_9\, r^{d}.\label{estim small ball 2}
\end{align}
\end{corollary}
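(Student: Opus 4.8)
The plan is to obtain Corollary~\ref{cor estim small ball} as an immediate consequence of Lemma~\ref{lem upper bound}, the only substantive point being that Condition $(\mathbf{C_0})$ is exactly the change-of-variable form of the estimate $f_\gamma(r)=\BigOh{r}$ as $r\to 0$. So first I would simply quote Lemma~\ref{lem upper bound}: for the given $M$ there are constants $\mathsf{c}_2,r_0>0$ such that
\[
\mathbb{P}\left\{\inf_{s\in B_\delta(t,r)\cap I}\|X(s)-z\|\le r\right\}\le \mathsf{c}_2\bigl(r+f_\gamma(r)\bigr)^d
\]
for all $r\in(0,r_0)$, $t\in I$, $z\in[-M,M]^d$, where $f_\gamma(r)=\int_0^{1/2}\gamma\bigl(\gamma^{-1}(l^{1/2}r)\,y\bigr)\,\frac{dy}{y\sqrt{\log(1/y)}}$.

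Next I would set $x:=\gamma^{-1}(l^{1/2}r)$, so that $\gamma(x)=l^{1/2}r$. Since $\gamma$ is increasing with $\lim_{0+}\gamma=0$, as soon as $l^{1/2}r\le\gamma(x_0)$, i.e. $r\le l^{-1/2}\gamma(x_0)$, we have $x\le x_0$, and Condition $(\mathbf{C_0})$ applied at this $x$ gives
\[
f_\gamma(r)=\int_0^{1/2}\frac{\gamma(xy)}{y\sqrt{\log(1/y)}}\,dy\le \mathsf{c}_8\,\gamma(x)=\mathsf{c}_8\,l^{1/2}\,r .
\]
Hence $r+f_\gamma(r)\le(1+\mathsf{c}_8 l^{1/2})r$, and plugging into the bound from Lemma~\ref{lem upper bound} yields the desired inequality with $\mathsf{c}_9=\mathsf{c}_2(1+\mathsf{c}_8 l^{1/2})^d$, valid on $r\in\bigl(0,\,r_0\wedge(l^{-1/2}\gamma(x_0))\bigr)$.

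Finally, to match the exact range $r\in(0,r_0\wedge\gamma(x_0))$ stated in the corollary (the discrepancy only arises when $l>1$), I would dispose of the leftover bounded interval $r\in[l^{-1/2}\gamma(x_0),\,r_0\wedge\gamma(x_0))$ by the trivial observation that the probability is at most $1$ while there $r^d\ge(l^{-1/2}\gamma(x_0))^d$, so enlarging $\mathsf{c}_9$ by the factor $(l^{1/2}/\gamma(x_0))^d$ absorbs that region; one then takes the larger of the two constants. There is essentially no obstacle here — the argument is a one-line substitution plus Lemma~\ref{lem upper bound} — and the only thing requiring a moment's care is precisely this mismatch between the $l^{1/2}$ inside $\gamma^{-1}$ and the cut-off $\gamma(x_0)$, which the trivial bound handles.
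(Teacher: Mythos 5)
Your proof is correct and is exactly the argument the paper intends: the corollary is stated as an immediate consequence of Lemma \ref{lem upper bound}, obtained by substituting $x=\gamma^{-1}(l^{1/2}r)$ so that Condition $(\mathbf{C_0})$ yields $f_\gamma(r)\le \mathsf{c}_8\,l^{1/2}r$. Your additional care in reconciling the range $r<r_0\wedge\gamma(x_0)$ with the natural threshold $r\le l^{-1/2}\gamma(x_0)$ via the trivial bound $\mathbb{P}\le 1$ is a valid (and slightly more scrupulous) handling of a detail the paper glosses over.
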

 It is immediate that all power functions satisfy \eqref{condition raisonable}. Moreover, we will see in the sequel that \eqref{condition raisonable} is satisfied by all regularly varying functions of index $\alpha\in (0,1]$. We include some facts here about indexes for the reader's reference.
    
Let $\gamma:(0,1]\rightarrow \R_+$ be a continuous function which is increasing near zero and $\lim_{x\downarrow 0}\gamma(x)=0$. Then its lower and upper indexes $\mathrm{ind}_{*}(\gamma)$ and $\mathrm{ind}^{*}(\gamma)$ are defined respectively as 
\begin{align}\label{def index}
    \begin{aligned}
    \mathrm{ind}_{*}\left(\gamma\right):&=\sup\{\alpha: \gamma(x)=o\left(x^{\alpha}\right)\}\\
    &=\left(\inf\{\beta: \gamma(x)=o\left(x^{1/\beta}\right)\} \right)^{-1}.
    \end{aligned}
\end{align}
and
\begin{align}
    \begin{aligned}
 \quad\quad \mathrm{ind}^{*}\left(\gamma\right)&:= \inf\left\{ \alpha \geq 0: \, x^{\alpha}=o\left(\gamma(x)\right)  \right\}\\
    &=\sup\left\{\alpha\geq 0:\, \liminf_{x\downarrow 0}\left(\frac{\gamma(x)}{x^{\alpha}}\right)=0 \right\}.
    \end{aligned}
\end{align}
It is well known that $\mathrm{ind}_{*}(\gamma)\leq \mathrm{ind}^{*}(\gamma)$. Moreover we have the following statement
\begin{lemma}\label{bnd for lwr-uppr ind}
    If $\gamma$ is differentiable near $0$, then
\begin{align}\label{bnd on indexes} \mathrm{ind}_{*}\left(\gamma\right)\geq \liminf_{r\downarrow 0}\left(\frac{r\, \gamma^{\prime}(r)}{\gamma(r)}\right)  \quad \text{ and } \quad \mathrm{ind}^{*}\left(\gamma\right)\leq \limsup_{r\downarrow 0}\left(\frac{r\, \gamma^{\prime}(r)}{\gamma(r)}\right).
    \end{align}
    \end{lemma}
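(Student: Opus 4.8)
The plan is to convert the hypotheses on the logarithmic derivative $r\mapsto \gamma'(r)/\gamma(r)=(\log\gamma)'(r)$ into two-sided power bounds on $\gamma$ itself, using the elementary fact that a function with positive (resp.\ negative) derivative on an interval is increasing (resp.\ decreasing) there; this keeps the argument valid under the bare differentiability hypothesis, without appealing to the fundamental theorem of calculus. First observe that since $\gamma$ is increasing near the origin with $\lim_{0+}\gamma=0$, we have $\gamma>0$ and $\gamma'\ge 0$ on some interval $(0,r_1)$, so that $\log\gamma$ is well defined and differentiable on $(0,r_1)$ with derivative $\gamma'/\gamma$, and both quantities $r\gamma'(r)/\gamma(r)$ are nonnegative there. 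If $\liminf_{r\downarrow 0}r\gamma'(r)/\gamma(r)=0$ the first inequality is trivial because indexes are nonnegative, and if $\limsup_{r\downarrow 0}r\gamma'(r)/\gamma(r)=+\infty$ the second is trivial; so we may assume the liminf is a positive number $\underline\beta$ and the limsup a finite number $\overline\beta$.

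For the lower-index bound, fix $\alpha'<\underline\beta$. By definition of the liminf there is $r_0\in(0,r_1)$ with $r\gamma'(r)/\gamma(r)>\alpha'$ for all $r\in(0,r_0)$. Then $g(r):=\log\gamma(r)-\alpha'\log r$ satisfies $g'(r)=\gamma'(r)/\gamma(r)-\alpha'/r>0$ on $(0,r_0)$, so $g$ is increasing there; comparing $g(x)$ with $g(r_0)$ for $x\in(0,r_0)$ and exponentiating gives $\gamma(x)\le C\,x^{\alpha'}$ near $0$, with $C:=\gamma(r_0)/r_0^{\alpha'}$. Hence, for every $\alpha<\alpha'$, $\gamma(x)/x^{\alpha}\le C\,x^{\alpha'-\alpha}\to 0$, i.e.\ $\gamma(x)=o(x^\alpha)$, so $\alpha\le\mathrm{ind}_{*}(\gamma)$; letting $\alpha\uparrow\alpha'$ and then $\alpha'\uparrow\underline\beta$ yields $\mathrm{ind}_{*}(\gamma)\ge\underline\beta$. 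The upper-index bound is symmetric: fix $\alpha'>\overline\beta$, find $r_0\in(0,r_1)$ with $r\gamma'(r)/\gamma(r)<\alpha'$ on $(0,r_0)$, note that the same $g$ is now decreasing on $(0,r_0)$, and deduce $\gamma(x)\ge c\,x^{\alpha'}$ near $0$ with $c:=\gamma(r_0)/r_0^{\alpha'}>0$; then for every $\alpha>\alpha'$, $x^{\alpha}/\gamma(x)\le c^{-1}x^{\alpha-\alpha'}\to 0$, so $x^\alpha=o(\gamma(x))$ and $\alpha\ge\mathrm{ind}^{*}(\gamma)$, whence $\mathrm{ind}^{*}(\gamma)\le\overline\beta$ after letting $\alpha\downarrow\alpha'\downarrow\overline\beta$.

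I do not anticipate a genuine obstacle; the only points requiring a little care are (i) ensuring $\gamma$ stays positive near the origin so that $\log\gamma$ and its derivative are meaningful, and (ii) using the monotonicity-from-the-sign-of-the-derivative argument rather than an integration argument, so that only differentiability (not $C^1$ regularity) of $\gamma$ near $0$ is needed. Everything else is the routine dictionary between ``$\gamma$ sandwiched between powers near $0$'' and ``bounds on $r\gamma'(r)/\gamma(r)$''.
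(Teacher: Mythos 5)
Your proof is correct and follows essentially the same route as the paper: both arguments convert the bound on the logarithmic derivative $r\gamma'(r)/\gamma(r)$ into monotonicity of $r\mapsto\gamma(r)/r^{\alpha}$ near $0$, deduce two-sided power bounds on $\gamma$, and then pass to the indexes by letting the auxiliary exponents tend to the liminf/limsup. The only (cosmetic) difference is that you obtain the monotonicity from the sign of the derivative of $\log\gamma(r)-\alpha\log r$, whereas the paper integrates the inequality $\alpha/r\le\gamma'(r)/\gamma(r)$ over $[r_1,r_2]$; your variant is marginally more careful under bare differentiability, but the substance is identical.
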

    \begin{proof}
We start with the left hand term of \eqref{bnd on indexes}. We assume that $\liminf_{r\downarrow 0}\left(r\, \gamma^{\prime}(r)/\gamma(r)\right)>0$ otherwise there is nothing to prove. Let us fix $0<\alpha^{\prime}<\alpha<\liminf_{r\downarrow 0}\left(r\, \gamma^{\prime}(r)/\gamma(r)\right)$, then there is $r_0>0$ such that $\alpha/r\leq \gamma^{\prime}(r)/\gamma(r)$ for any $r\in (0,r_0]$. Next, for $r_1<r_2\in (0,r_0]$ we integrate over $[r_1,r_2]$ both of elements of the last inequality, we obtain that $\log\left(r_2/r_1\right)^{\alpha}\leq \log\left(\gamma(r_2)/\gamma(r_1)\right)$, this implies immediately that $r\mapsto \gamma(r)/r^{\alpha}$ is nondecreasing on $(0,r_0]$, and thence $\lim_{r\downarrow 0}\gamma(r)/r^{\alpha}$ exists and finite. Since $\alpha^{\prime}<\alpha$, we get $\lim_{r\downarrow 0}\gamma(r)/r^{\alpha^{\prime}}=0$ and then $\alpha^{\prime}\leq \mathrm{ind}(\gamma)$. Since $\alpha^{\prime}$ and $\alpha$ are arbitrarily chosen, the desired inequality holds by letting $\alpha^{\prime}\uparrow \alpha$ and $\alpha\uparrow \liminf_{r\downarrow 0}\left(r\, \gamma^{\prime}(r)/\gamma(r)\right)$.     
    
For the upper inequality in \eqref{bnd on indexes}, we assume that $\limsup_{r\downarrow 0}\left(r\, \gamma^{\prime}(r)/\gamma(r)\right)<\infty$ otherwise there is nothing to prove. We fix $\alpha^{\prime}>\alpha>\limsup_{r\downarrow 0}\left(r\, \gamma^{\prime}(r)/\gamma(r)\right)$. By a similar argument as above there exists $r_1>0$ such that $r\mapsto \gamma(r)/r^{\alpha}$ is nonincreasing on $(0,r_1]$, and then $\lim_{r\downarrow 0}\gamma(r)/r^{\alpha}$ exists and positive. Therefore $\lim_{r\downarrow 0}\gamma(r)/r^{\alpha^{\prime}}=\infty$ and thence $\mathrm{ind}^{*}(\gamma)\leq \alpha^{\prime}$. Hence, by letting $\alpha^{\prime}\downarrow \alpha$ and $\alpha\downarrow \limsup_{r\downarrow 0}\left(r\, \gamma^{\prime}(r)/\gamma(r)\right)$, we obtain the desired inequality.
    \end{proof}
    \begin{remark}
    Notice that if in addition $\gamma$ is concave then $\limsup_{r\downarrow 0}\left(\frac{r\, \gamma^{\prime}(r)}{\gamma(r)}\right)\leq 1$.
    \end{remark}
    
   \noindent Recall that $\gamma$ is said to be a {\it regularly varying function near $0$} with index $\alpha\in (0,1]\,$ if it can be represented as 
   $$
   \gamma(x)=x^{\alpha}\, L(x),
   $$
    for all $x\in(0,x_0)$ for some $x_0>0$, where $L: (0,x_0)\rightarrow [0,\infty)$ is a slowly varying function at $0$ in the sense of Karamata, see for example \cite{Bingham et al}. Moreover such a slowly varying function can be represented as 
    \begin{align}
    L(x)=\exp\left(\eta(x) + \int_{x}^{x_0}\frac{\varepsilon(t)}{t}dt\right),
    \end{align}
    where $\eta,\varepsilon: [0,x_0)\rightarrow \mathbb{R}$, are Borel measurable and bounded functions, such that  
    $$ 
    \lim_{x\rightarrow 0}\eta(x)=\eta_0\in (0,\infty) \quad \text{ and  } \quad \lim_{x\rightarrow 0}\varepsilon(x)=0.
    $$
    For more details one can see Theorem 1.3.1 in \cite{Bingham et al}. It is known from Theorem 1.3.3 and Proposition 1.3.4 in \cite{Bingham et al} and the ensuing discussion that there exists $\widetilde{L}:(0,x_0]\rightarrow \R_+$ which is $\mathcal{C}^{\infty}$ near zero such that  $L(x) \thicksim \widetilde{L}(x)$ as $x\rightarrow 0$, and $\widetilde{L}(\cdot)$ has the following form
    \begin{equation}\label{nice rep slow var}
        \widetilde{L}(x)=\mathsf{c}_{10}\,\exp\left( \int_{x}^{x_0}\frac{\widetilde{\varepsilon}(t)}{t}dt\right),
    \end{equation}
    for some positive constant $\mathsf{c}_{10}$. Such function is called normalized slowly varying function (Kohlbecker \cite{Kohlbecker 58}), and in this case 
    \begin{equation}\label{epsilon calcul}
     \widetilde{\varepsilon}(x)=-x\, \widetilde{L}^{\prime}(x)/\widetilde{L}(x) \quad \text{for all }  \,x\in (0,x_0).
    \end{equation} 
For more properties of regularly varying functions see Seneta \cite{Seneta} or Bingham et al. \cite{Bingham et al}. 
    
\begin{remark}
     It is remarkable that Lemma \ref{bnd for lwr-uppr ind} implies that when the limit $\alpha:=\lim_{r\downarrow 0}\left(\frac{r\gamma^{\prime}(r)}{\gamma(r)}\right)$ exists, then $\mathrm{ind}_{*}(\gamma)=\mathrm{ind}^{*}(\gamma)=\alpha$. Moreover one then readily checks that if $\alpha>0$, then $\gamma(\cdot)$ is regularly varying with index $\alpha$, and in this case, $\gamma(\cdot)$ can be represented as $\gamma(x)=x^{\alpha}\, L(x)$ for all $x\in (0,x_0]$ for some $x_0\in (0,1)$, where $L(x)=\mathsf{c}_{10}\,\exp\left(\int_x^{x_0} \frac{\varepsilon(t)}{t}dt\right)$, and $\varepsilon(x)=-\frac{x\,L^{\prime}(x)}{L(x)}=\alpha-\frac{x\gamma^{\prime}(x)}{\gamma(x)}$. 
\end{remark}
\noindent The following result ensures that all regularly varying functions with indexes in $(0,1)$ satisfy \eqref{condition raisonable}.
\begin{proposition}\label{prop RVF}
Let $\gamma$ be a regularly varying function near 0, with index $\alpha \in (0,1]$. Then $\gamma$ satisfies \eqref{condition raisonable}.
\end{proposition}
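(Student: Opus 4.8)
The plan is to exploit the Karamata representation of $\gamma$ recalled just before the statement, reducing matters to a normalized slowly varying function, and then to control the integrand by a Potter-type bound read off directly from that representation.

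I would first write $\gamma(x)=x^{\alpha}L(x)$ on $(0,x_0)$ with $L$ slowly varying, and let $\widetilde L$ be the normalized slowly varying function with $L\sim\widetilde L$ near $0$ and representation \eqref{nice rep slow var}, so that $\widetilde L(x)=\mathsf{c}_{10}\exp\!\big(\int_x^{x_0}\widetilde\varepsilon(t)/t\,dt\big)$ with $\widetilde\varepsilon(t)\to0$ as $t\downarrow0$. Put $\widetilde\gamma(x):=x^{\alpha}\widetilde L(x)$. Since $\gamma(x)/\widetilde\gamma(x)=L(x)/\widetilde L(x)\to1$ as $x\downarrow 0$, there is $x_1\in(0,x_0)$ with $\tfrac12\widetilde\gamma\le\gamma\le2\widetilde\gamma$ on $(0,x_1]$; because $xy< x$ whenever $y\le1/2$, it then suffices to prove \eqref{condition raisonable} with $\widetilde\gamma$ in place of $\gamma$ (the resulting multiplicative constants being absorbed into $\mathsf{c}_8$) and for $x$ restricted to a possibly smaller neighborhood of the origin.

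Next I would fix $\delta\in(0,\alpha)$ and choose $x_2\in(0,x_1]$ so small that $|\widetilde\varepsilon(t)|\le\delta$ for all $t\in(0,x_2]$. For $x\in(0,x_2]$ and $y\in(0,1/2]$ one has $[xy,x]\subset(0,x_2]$, so the representation yields the bound
$$\frac{\widetilde L(xy)}{\widetilde L(x)}=\exp\!\left(\int_{xy}^{x}\frac{\widetilde\varepsilon(t)}{t}\,dt\right)\le\exp\!\left(\delta\int_{xy}^{x}\frac{dt}{t}\right)=y^{-\delta}.$$
Hence
$$\int_0^{1/2}\widetilde\gamma(xy)\,\frac{dy}{y\sqrt{\log(1/y)}}=x^{\alpha}\int_0^{1/2}\frac{y^{\alpha-1}\,\widetilde L(xy)}{\sqrt{\log(1/y)}}\,dy\le\widetilde\gamma(x)\int_0^{1/2}\frac{y^{\alpha-1-\delta}}{\sqrt{\log(1/y)}}\,dy,$$
and the last integral is a finite constant $\mathsf{c}$: near $y=0$ the power $y^{\alpha-1-\delta}$ is integrable since $\alpha-1-\delta>-1$, the factor $1/\sqrt{\log(1/y)}$ is bounded on $(0,1/2]$ and only helps, and there is no singularity at the upper endpoint. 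Recombining with the reduction step gives \eqref{condition raisonable} with $\mathsf{c}_8=4\mathsf{c}$ and $x_0$ replaced by $x_2$.

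I do not anticipate a serious obstacle: the core estimate is precisely the uniform-convergence (Potter) bound for slowly varying functions, which in the normalized form \eqref{nice rep slow var} is immediate. The only places needing a little care are the passage from $\gamma$ to its normalized companion $\widetilde\gamma$ — legitimate because $xy\le x\le x_0$ throughout the range of integration — and the choice $\delta<\alpha$, which is exactly what makes $y^{\alpha-1-\delta}$ integrable at the origin and is where the hypothesis $\alpha>0$ enters. (If $\alpha=1$ one simply also takes $\delta<1$, which is automatic from $\delta<\alpha$.)
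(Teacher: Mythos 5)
Your proof is correct. Both you and the paper begin with the same reduction, replacing $L$ by its normalized companion $\widetilde L$ via \cite[Proposition 1.3.4]{Bingham et al}, but the key estimate is handled by a genuinely different route. You extract a Potter-type bound $\widetilde L(xy)/\widetilde L(x)=\exp\bigl(\int_{xy}^{x}\widetilde\varepsilon(t)\,dt/t\bigr)\le y^{-\delta}$ directly from the representation \eqref{nice rep slow var} (valid once $|\widetilde\varepsilon|\le\delta$ near $0$, with $\delta<\alpha$), which reduces everything to the convergence of the fixed integral $\int_0^{1/2}y^{\alpha-1-\delta}\,dy/\sqrt{\log(1/y)}$; this is self-contained, requires no differentiation of $\gamma$, and yields an explicit constant $\mathsf{c}_8$ together with an explicit admissible range of $x$. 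The paper instead bounds $1/\sqrt{\log(1/y)}$ by $\log^{-1/2}(2)$, changes variables $z=xy$ to rewrite the bound as $\log^{-1/2}(2)\,\gamma(x)^{-1}\int_0^{x}L(z)z^{\alpha-1}\,dz$, and then applies l'H\^opital's rule using $\gamma'(x)=x^{\alpha-1}L(x)(\alpha-\varepsilon(x))$ to identify the limsup as $\log^{-1/2}(2)/\alpha$; this is shorter and produces a cleaner asymptotic constant, at the cost of leaning on the smoothness of the normalized representation and giving only an eventual bound rather than an explicit one. Both arguments use $\alpha>0$ in exactly the same essential way — it is what makes the relevant power of $y$ (or $z$) integrable at the origin.
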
    
\begin{proof}
Since $\gamma$ is a regularly varying function we represent it as $\gamma(x)=x^{\alpha}\,L(x)$ for all $x\in (0,x_0)$ as discussed above. By a result of Adamović  \cite[Proposition 1.3.4]{Bingham et al}, since we are interested only in the asymptotic behavior of $\gamma$ near 0, we may assume without loss of generality that the slowly varying part $L(\cdot)$ is $\mathcal{C}^{\infty}$ and has the representation \eqref{nice rep slow var}. 
Now let 
$$
I(x):= \frac{1}{ \gamma(x)}\dint_0^{1/2}\gamma(xy)\frac{dy}{y\sqrt{\log(1/y)}}.
$$
Then we only need to show that $I(x)$ is bounded as $x$ approaches $0$. We first have 
\begin{align}
    \begin{aligned}
      I(x)=\frac{x^{\alpha}}{ \gamma(x)}\,\dint_0^{1/2}L(xy)\frac{dy}{y^{1-\alpha}\sqrt{\log(1/y)}}&\leq  \frac{\log^{-1/2}(2)\,x^{\alpha}}{\gamma(x)}\, \dint_0^{1/2}L(xy)\frac{dy}{y^{1-\alpha}}\\
    & \leq \frac{\log^{-1/2}(2)}{\gamma(x)}\, \dint_0^xL(z)\frac{dz}{z^{1-\alpha}}.
    \end{aligned}
\end{align}
It is easy to check that $\gamma^{\prime}(x)=x^{\alpha-1}\,L(x)\left(\alpha-\varepsilon(x)\right)$. Thus we may apply l'H\^opital's rule to get that
\begin{align*}
    \limsup_{x\downarrow 0}\frac{1}{ \gamma(x)}\dint_0^{1/2}\gamma(xy)\frac{dy}{y\sqrt{\log(1/y)}} &\leq  \lim_{x\downarrow 0}\frac{\log^{-1/2}(2)}{\gamma(x)}\int
_0^xL(z)z^{\alpha-1}dz\\
&=\lim_{x\downarrow 0}\frac{\log^{-1/2}(2)\,x^{\alpha-1}\,L(x)}{x^{\alpha-1}\,L(x)\left(\alpha-\varepsilon(x)\right)}=\log^{-1/2}(2)/\alpha<\infty,
\end{align*}
since $\alpha>0$. This  finishes the proof.  
\end{proof}
\noindent Here are some examples of regularly varying functions which immediately satisfy Condition \textbf{$(\mathbf{C_0})$}.
\begin{example}\label{explHe ofareVF}
\,

\begin{itemize}
    \item[i)]$\gamma_{\alpha,\beta}(r):=r^{\alpha}\log^{\beta}(1/r)$ for $\beta \in \mathbb{R}$ and $\alpha\in (0,1)$,
    \item[ii)]$\gamma_{\alpha,\beta}(x):=x^{\alpha}\,\exp\left(\log^{q}(1/x)\right)$ for $q \in (0,1)$ and $\alpha \in (0,1)$,
    \item[iii)]$\gamma_{\alpha}(x):=x^{\alpha}\, \exp\left(\frac{\log(1/x)}{\log\left(\log(1/x)\right)}\right)$ for $\alpha\in (0,1)$.
\end{itemize}
\end{example}

On the other hand, one of our goals in this paper is to study path properties for continuous Gaussian processes, satisfying Condition $\mathbf{(\Gamma)}$, i.e. relations \eqref{commensurate}, within or beyond the Hölder scale.  
If $\hbox{ind}_{*}(\gamma)>0$, it is not difficult to check that all trajectories of $X$ are $\beta$-Hölder continuous for any $\beta\in \left(0,\mathrm{ind}_{*}(\gamma)\right)$. When ${\mathrm{ind}_{*}}(\gamma)={\mathrm{ind}^{*}}(\gamma)=0$, the trajectories of $X$ are never H\"older continuous. Since all continuous Gaussian  processes must live at least in the logarithmic scale, i.e we should have $\gamma(x)=o\left( \log^{-\beta}(1/r)\right)$ for some $\beta\geq 1/2$. Thinking of this logarithmic scale as the most irregular one, there are several other regularity scales which interpolate between H\"older-continuity scale and the aforementioned logarithmic scale. This compels us to ask the following question: Is there a continuous and increasing function $\gamma$ with $\hbox{ind}_{*}(\gamma)=\hbox{ind}^{*}(\gamma)=0$ which satisfies \eqref{condition raisonable}? 

Noting that most examples of interest of function $\gamma$  with  $\hbox{ind}_{*}(\gamma)=\hbox{ind}^{*}(\gamma)=0$ are slowly varying in the sense of Karamata, for any such function $\gamma$, \cite[Proposition 1.3.4]{Bingham et al} ensures that $\gamma$ is commensurate with a $\mathcal{C}^{\infty}$ function $\gamma_0$ which satisfies $\lim_{x\downarrow 0}\frac{x\, \gamma^{\prime}_{0}(x)}{\gamma_{0}(x)}=0$. Then the following proposition addresses the aforementioned compelling question, essentially providing a negative answer.


    \begin{proposition}\label{prop suffic cond}
    Let $\gamma:[0,1] \rightarrow \mathbb{R}_+$ be a differentiable increasing function and assume that $\lim_{x\downarrow 0}x\, \gamma^{\prime}(x)/\gamma(x)=0$. Then   
\begin{align}\label{cond not hold}
    \lim_{x\downarrow 0}\left(\frac{1}{\gamma(x)}\dint_0^{1/2}\gamma(x\, y)\frac{dy}{y\sqrt{\log(1/y)}}\right)=\infty.
    \end{align}
    \end{proposition}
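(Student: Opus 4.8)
The plan is to reduce everything to one scaling estimate that lets the ``index zero'' hypothesis $\lim_{x\downarrow 0}x\gamma'(x)/\gamma(x)=0$ do all the work. First I would record the elementary consequence of this hypothesis: given any $\eps>0$ there is $x_0=x_0(\eps)\in(0,1)$ such that $0\le t\gamma'(t)/\gamma(t)\le \eps$ for all $t\in(0,x_0]$ (the lower bound $0$ just uses that $\gamma$ is increasing and positive). Equivalently, the differentiable function $t\mapsto \log\gamma(t)-\eps\log t$ has nonpositive derivative on $(0,x_0]$, hence is nonincreasing there (this is plain MVT, so no absolute continuity of $\gamma$ is needed). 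Consequently, for $0<z\le x\le x_0$ one gets $\log\gamma(x)-\log\gamma(z)\le \eps\log(x/z)$, i.e.
\[
\gamma(z)\ \ge\ (z/x)^{\eps}\,\gamma(x)\qquad\text{for }0<z\le x\le x_0(\eps).
\]

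Next I would feed this bound into the integral. For $x\le x_0(\eps)$ and $y\in(0,1/2)$ we have $xy<x\le x_0$, hence $\gamma(xy)\ge y^{\eps}\gamma(x)$, and therefore
\[
\frac{1}{\gamma(x)}\dint_0^{1/2}\gamma(xy)\,\frac{dy}{y\sqrt{\log(1/y)}}\ \ge\ \dint_0^{1/2}\frac{y^{\eps-1}}{\sqrt{\log(1/y)}}\,dy\ =:\ M(\eps).
\]
For each fixed $\eps>0$ the quantity $M(\eps)$ is finite (near $y=0$ the integrand is dominated by $y^{\eps-1}$, which is integrable), but as $\eps\downarrow 0$ the integrand increases pointwise to $y^{-1}(\log(1/y))^{-1/2}$, whose integral over $(0,1/2)$ equals $\int_{\log 2}^{\infty}u^{-1/2}\,du=\infty$ after the substitution $u=\log(1/y)$. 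So by monotone convergence $M(\eps)\to\infty$ as $\eps\downarrow 0$.

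Combining the two steps finishes the proof: given any level $N>0$, choose $\eps$ small enough that $M(\eps)>N$; then for every $x\in(0,x_0(\eps)]$ the left-hand side above exceeds $N$, which is exactly the claimed divergence \eqref{cond not hold}. There is no genuine obstacle here; the only points that need a little care are (i) ensuring the inequality $\gamma(xy)\ge y^{\eps}\gamma(x)$ holds simultaneously over the whole range $y\in(0,1/2)$, which is why $x_0$ must be picked so that the index bound holds on all of $(0,x_0]$ rather than merely near the point $x$, and (ii) the order of limits, since $\eps$ is selected after $N$ and only afterwards is $x$ sent to $0$. It is worth contrasting this with Proposition~\ref{prop RVF}: there a positive index $\alpha$ keeps $y^{\alpha-1}$ integrable even against the extra factor $y^{-1}$, whereas here the index is $0$, precisely the borderline at which $\int_0^{1/2}y^{-1}(\log(1/y))^{-1/2}\,dy$ diverges.
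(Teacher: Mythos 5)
Your proof is correct. The inequality $\gamma(xy)\ge y^{\varepsilon}\gamma(x)$ for $x\le x_0(\varepsilon)$ is a valid consequence of the hypothesis (the MVT argument for the monotonicity of $t\mapsto\log\gamma(t)-\varepsilon\log t$ is sound), the resulting lower bound $M(\varepsilon)=\int_0^{1/2}y^{\varepsilon-1}(\log(1/y))^{-1/2}\,dy$ is uniform in $x\in(0,x_0(\varepsilon)]$, and monotone convergence correctly gives $M(\varepsilon)\uparrow\infty$; the order of quantifiers ($N$, then $\varepsilon$, then $x\downarrow 0$) is handled properly. The underlying idea is the same as the paper's — both exploit that $\gamma$ is slowly varying, so $\gamma(xy)/\gamma(x)$ cannot decay in $y$ fast enough to tame the divergent kernel $y^{-1}(\log(1/y))^{-1/2}$ — but the execution differs. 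The paper first invokes the Karamata representation $\gamma(x)=\mathsf{c}\exp\bigl(\int_x^{x_0}\varepsilon(t)t^{-1}\,dt\bigr)$ of a normalized regularly varying function, then applies Fatou's lemma and computes the exact pointwise limit $\gamma(xy)/\gamma(x)\to 1$ via the bound $\bigl|\int_{xy}^{x}\varepsilon(t)t^{-1}\,dt\bigr|\le\log(1/y)\sup_{t<x}|\varepsilon(t)|$. You instead derive a Potter-type uniform lower bound directly from the differential hypothesis and then pass to the limit in $\varepsilon$ rather than in the integrand. Your route is slightly more self-contained (no appeal to regular-variation theory or to Fatou) and has the minor additional payoff of giving an explicit quantitative rate: the left-hand side of \eqref{cond not hold} is at least $M(\varepsilon)$ as soon as the index ratio is below $\varepsilon$, uniformly on $(0,x_0(\varepsilon)]$, whereas Fatou only yields the $\liminf$ statement. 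Either argument is acceptable.
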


    \begin{proof} From Lemma \ref{bnd for lwr-uppr ind}, since $\lim_{x\downarrow 0}\frac{x\, \gamma^{\prime}(x)}{\gamma(x)}=0$ implies that $\hbox{ind}_{*}(\gamma)=\hbox{ind}^{*}(\gamma)=0$, and $\gamma(\cdot)$ is normalized regularly varying at zero, hence it can be represented as $\gamma(x)=\mathsf{c}_8\,\exp\left(\int_x^{x_0} \varepsilon(t)/tdt\right)$ where $\varepsilon(x):=-\frac{x\gamma^{\prime}(x)}{\gamma(x)}$ for some fixed $x_0\in (0,1)$. 
    Then using Fatou's Lemma we obtain
\begin{align}\label{lwr bnd integr 1}
     \liminf_{x\downarrow 0}\left(\frac{1}{\gamma(x)}\dint_0^{1/2}\gamma(x\, y)\frac{dy}{y\sqrt{\log(1/y)}}\right)&\geq \dint_0^{1/2}\lim_{x\downarrow 0}\left(\frac{\gamma( x\, y)}{\gamma( x)}\right)\frac{dy}{y\sqrt{\log(1/y)}}\nonumber\\
     &= \dint_{0}^{1/2}\exp\left(\lim_{x\downarrow 0}\int_{xy}^{x} \varepsilon(t)/tdt\right) \frac{dy}{y\sqrt{\log(1/y)}}\\
     &= \dint_{0}^{1/2} \frac{dy}{y\sqrt{\log(1/y)}}=\infty, \nonumber
 \end{align}
where, from the second to the third line, we used the facts that, for any fixed $y\in (0,1/2)$, we have
$$
\left\vert \int_{xy}^{x} \varepsilon(t)/tdt \right\vert \leq \log(1/y)\,\sup_{t\in (0,x)}\vert\varepsilon(t)\vert,
$$
for all $x\in (0,x_0)$, and that $\lim_{x\downarrow 0}\lvert \varepsilon(x)\rvert=0$. This finishes the proof.
    \end{proof}

The last result shows that condition \textbf{($\mathbf{C_{0}}$)} fails for a wide array of functions $\gamma$ with zero index. Thus condition \textbf{($\mathbf{C_{0}}$)} will not help to provide information on the upper bounds of the Hausdorff dimension of image and graph and the hitting probabilities for Gaussian processes whose  modulus of continuity is  slowly varying. We must therefore devise a weaker condition than \textbf{($\mathbf{C_{0}}$)}, satisfied by a larger class of $\gamma$'s, including slowly varying functions. First of all, for $\varepsilon>0$ we propose the following condition. 

\vspace{0.25cm}
\textbf{$\mathbf{(C_{\varepsilon})}$}: There exist three constants $\varepsilon \in (0,1)$, $\mathsf{c}_{\varepsilon}>0$ and $x_{\varepsilon}>0$, such that
\begin{align}\label{nice condition}
    \dint_0^{1/2}\gamma(xy)\frac{dy}{y\sqrt{\log(1/y)}}\leq \mathsf{c}_{\varepsilon}\, \left(\gamma(x)\right)^{1-\varepsilon}\text{ \quad for all $0<x<x_{\varepsilon}$}.
\end{align}
The following condition, denoted by \textbf{$\mathbf{(C_{0+})}$}, is weaker than \textbf{$\mathbf{(C_{0})}$} and it will be helpful to give some optimal upper bounds for the Hausdorff dimension of the image and graphe of $X$ and the hitting probabilities.

\vspace{0.25cm}
\textbf{$\mathbf{(C_{0+})}$}: For all $\varepsilon>0$ there exist two constants $\mathsf{c}_{\varepsilon}>0$ and $x_{\varepsilon}>0$, such that \eqref{nice condition} is satisfied.

\vspace{0.25cm}
The following example shows that the weaker condition \textbf{$\mathbf{(C_{0+})}$} is satisfied by a large class of functions $\gamma$ with $\mathrm{ind}_{*}(\gamma)=\mathrm{ind}^{*}(\gamma)=0$. 


\begin{example}\label{exmpl weak cond}
    Let $q\in (0,1)$ and let $\gamma_{q}$ be the function defined by $\gamma_{q}(x):=\exp\left(-\log^{q}(1/x)\right)$ for $x\in [0,1]$. Then $\gamma_{q}$ satisfies \textbf{$\mathbf{(C_{0+})}$}. 
    \end{example}
    \begin{remark} 
    Let us prove the claim in Example \ref{exmpl weak cond}. We have 
\begin{equation}\label{estim integral 2}
    \begin{aligned}
        \int_{0}^{1 / 2} \gamma_{q}(x y) \frac{d y}{y \sqrt{\log (1 / y)}}&=\int_{0}^{1 / 2} \exp\left(-\left(\log(1/x)+\log(1/y)\right)^{q}\right) \frac{d y}{y \sqrt{\log (1 / y)}}\\
        &=\int_{\log 2}^{\infty}\exp\left(-\left(\log(1/x)+z\right)^{q}\right)\frac{dz}{\sqrt{z}},
    \end{aligned}
\end{equation}
where we used the change of variable $z=\log(1/y)$. Using the fact that, for all $\mathfrak{c}\in (0,1)$ there is some $N:=N(\mathfrak{c})>0$ large enough, so that  
\begin{align}
    (1+u)^{q}\geq 1+\mathfrak{c}\,u^{q} \quad \text{for all $u\geq N$, }\label{real estimat}
\end{align}
we may fix $\mathfrak{c}\in (0,1)$, and its corresponding $N(\mathfrak{c})$. Then we break the integral in \eqref{estim integral 2} into the intervals $\,[\log(2), N \log(1/x))\,$ and $\,[N\log(1/x), +\infty)\,$ and denote them by $\mathcal{I}_1$ and $\mathcal{I}_2$, respectively. We write
\[
\left(\log(1/x)+z\right)^{q}=\log^{q}(1/x)\times\left(1+z/\log(1/x)\right)^{q}, 
\]
and we note that the second term is bounded from below by $1+\mathfrak{c}\left(\frac{z}{\log(1/x)}\right)^{q}$ when $z\geq N\log(1/x)$ due to \eqref{real estimat}, and bounded from below by $1$ when $z< N\log(1/x)$. Therefore 
\begin{align}\label{I1}
\mathcal{I}_1\leq\exp\left(-\log^{q}(1/x)\right)\, \int_{0}^{N\log(1/x)}\frac{dz}{\sqrt{z}}=2\,\gamma_{q}(x)\, \sqrt{N\,\log(1/x)}.
\end{align}
On the other hand
\begin{align}\label{I2}
    \mathcal{I}_2\leq \exp\left(-\log^{q}(1/x)\right)\,\int_{0}^{\infty}\operatorname{e}^{-\mathfrak{c}z^{q}}\frac{dz}{\sqrt{z}}=\mathsf{c}(q)\, \gamma_{q}(x).
\end{align}
Combining \eqref{I1}, \eqref{I2} and the fact $\,\sqrt{\log(1/x)}=o\left(\gamma_{q}^{-\varepsilon}(x)\right)$ for all $\varepsilon>0$, the proof of the claim in Example \ref{exmpl weak cond} is complete.
\end{remark}

As announced in the introduction, we spend some effort in this paper to study the Hausdorff dimensions of image sets and graphs, and associated hitting probabilities, for extremely irregular continuous Gaussian processes, those which satisfy Condition \textbf{$\mathbf{(C_{\varepsilon})}$} for some $\varepsilon\in (0,1)$. We use the logBm processes as a main source of examples. Proving that logBm is non-H\"older-continuous can be done ``by hand'' by employing a classical technique to establish a liminf on the gauge function in the H\"older-modulus of continuity, as is done for Brownian motion. It can also be established by invoking Fernique's  zero-one law regarding gauge functions of Gaussian processes, which states that any gauge function of the path of such a process must be a sub-Gaussian variable, and must thus have a finite expected value. This property can then be combined with the known optimality of Dudley's so-called entropy integral as an upper and lower bound for Gaussian processes with stationary increments, up to multiplicative constants.
This proof strategy must be adapted to deal with the issue that the increments of $B^{\gamma}$ are only roughly stationary in the sense of commensurability (as defined as in relations \eqref{commensurate}). The same proof structure also works to show that the process $B^{\gamma_q}$ defined using $\gamma_q$ in Example \ref{exmpl weak cond} is not H\"older-continuous, and similarly to prove that that an a.s. modulus of continuity for $B^{\gamma_q}$ is not an a.s. modulus of continuity for any logBm. 

The details of these proofs are not within the scope of this paper, and are left to the interested reader, who will find \cite{Adler 2, Marcus-Rosen, Eulalia&Viens2013} and results in the current section herein instructive. In justifying Example \ref{exmpl weak cond}, we proved that the standard deviation function $\gamma$  of $B^{\gamma_q}$ satisfies \textbf{$\mathbf{(C_{0+})}$}; the reader will easily check that the standard deviation function $\gamma$ of logBm satisfies \textbf{$\mathbf{(C_{1/2\beta})}$} but fails to satisfy \textbf{$\mathbf{(C_{\varepsilon})}$} for all $\varepsilon\in (0,1/2\beta)$. 

\subsection{Hausdorff measure, Hausdorff dimension and Riesz-Bessel capacity on $\mathbb{R}_+$ and $\R_+\times \R^d$ equipped with general metrics}
To give formula for the Hausdorff dimension of the image $X(E)$ and the graph $Gr_E(X)$ under some general conditions on $\gamma$, we must first provide appropriate notions of Hausdorff measure and Hausdorff dimension associated with a general metric $\delta$, since these will apply in particular with $\delta$ equal to the canonical metric $\delta$. 

Let $\delta: [0,1]\times [0,1]\rightarrow \mathbb{R}_+$ be a metric on $[0,1]$. For $\beta>0$ and $E \subset [0,1]$, the $\beta$-dimensional Hausdorff measure of $E$ in the metric $\delta$ is defined by 
    \begin{equation}
    \mathcal{H}_{\delta}^{\beta}(E):=\lim_{\eta \rightarrow 0}\inf \left\{\sum_{n=1}^{\infty}\left(2 r_{n}\right)^{\beta}: E \subseteq \bigcup_{n=1}^{\infty} B_{\delta}\left(r_{n}\right), r_{n} \leqslant \eta \right\}.\label{parabolic Hausdorff measure 0}
    \end{equation}
    The associated Hausdorff dimension is defined as 
    \begin{equation}\label{dim delta}
        \dim_{\delta}(E):=\sup\left\{\beta>0: \mathcal{H}_{\delta}^{\beta}(E)>0 \right \}.
    \end{equation}
The Bessel-Riesz capacity of order $\beta$ in the metric $\delta$ is defined by 
\begin{equation}
    	\mathcal{C}^{\beta}_{\delta}(E):=\left[\inf _{\nu \in \mathcal{P}(E)} \mathcal{E}_{\delta,\beta}(\nu)\right]^{-1},\label{delta capacity}
\end{equation}
    	where $\mathcal{E}_{\delta,\beta}(\nu)$ denote the $\beta$-energy of a measure $\nu \in \mathcal{P}(E)$ in the metric space ${\delta}$, defined as  
$$ 
\mathcal{E}_{\delta,\beta}(\nu):= \int_{\mathbb{R}_+} \int_{\mathbb{R}_+} \frac{\nu(d t) \nu(d s)}{(\delta(t,s))^{\beta}}.
$$
If $\delta$ is the Euclidean metric on $\mathbb{R}^n$ for some $n$ we denote the associated $\beta$-energy by $\mathcal{E}_{\operatorname{euc},\beta}(\cdot)$ and the corresponding Bessel-Riesz capacity by $\mathcal{C}_{\operatorname{euc}}^{\beta}(\cdot)$.
There exists  an alternative expression for the Hausdorff dimension given through the Bessel-Riesz capacities by
\begin{align}\label{altern dim delta}
    \dim_{\delta}(E)=\sup\left\{\beta>0: \mathcal{C}_{\delta}^{\beta}(E)>0 \right \}.
\end{align}
It is useful to understand from whence formula \eqref{altern dim delta} comes. The fact that the right hand of \eqref{altern dim delta} is a lower bound for $\dim_{\delta}(E)$ is due to the so-called energy method (see for example Theorem 4.27 in \cite{Peres&Morters}). That it is an upper bound comes from an application of Frostman's Lemma in the metric space $\left([0,1],\delta \right)$, as we now explain. 

Since capacities  are non-negative, if $\dim_{\delta}(E)=0$, then the upper bound in \eqref{altern dim delta} holds. We thus assume that $\dim_{\delta}(E)>0$. It was proven in \cite{Ho95} that, if $E$ is any subset of some general metric space $(Z,\delta)$ then we have 
\begin{align}\label{dim Frost}
     \dim_{\delta}(E)=\sup\left\{\beta \,:\, \exists r_0>0, \mathsf{c}_0>0, \text{ and } \nu \in \mathcal{P}(E) : \nu\left(B_{\delta}(z,r)\right)\leq \mathsf{c}_0\, r^{\beta} \text{ for all } r<r_0 \text{ and } z\in Z \right\}.
\end{align}
See for example Proposition $5$ and  Note $12$ in \cite{Ho95} for a good understanding of this last formulation, which we now use to prove the remaining inequality in \eqref{altern dim delta}. 
Let $\alpha \in (0,\dim_{\delta}(E))$, and fix some $\beta\in (\alpha,\dim_{\delta}(E))$. Equality \eqref{dim Frost} implies that there exists $\nu \in \mathcal{P}(E)$, $0<r_0<1$, and $0<\mathsf{c}_0<\infty$ such that 
\begin{equation}\label{Frost cond nu}
    \nu\left(B_{\delta}(z,r)\right)\leq \mathsf{c}_0\, r^{\beta} \quad \text{ for all $r<r_0$ and $z\in Z$.}
\end{equation}
For a fixed $t\in E$, since \eqref{Frost cond nu}  ensures that $\nu$ has no atom, we derive the following decomposition:
\begin{align}\label{decomp energy}
    \begin{aligned}
    \int_E\frac{\nu(ds)}{\delta(t,s)^{\alpha}}=\sum_{k=1}^{\infty}\int_{\delta(t,s)\in (2^{-k},2^{-k+1}]}\frac{\nu(ds)}{\delta(t,s)^{\alpha}}&\leq \sum_{k=1}^{\infty}2^{k\alpha}\nu\left(B_{\delta}(t,2^{-k+1})\right)\\
    &\leq \mathsf{c}_1\, \sum_{k=1 }^{\infty}2^{-k(\beta-\alpha)},
    \end{aligned}
\end{align}
with $\mathsf{c}_1=2^{\beta}\,\mathsf{c}_0$. The last sum is finite since $\alpha<\beta$, and does not depend on $t\in E$. Using the fact that $\nu$ is a probability measure, we deduce that $\mathcal{E}_{\delta,\alpha}(\nu)<+\infty$. 
which finishes the proof of the upper bound part in \eqref{altern dim delta}.

We will also need Hausdorff-dimension notions to quantify the size of the graphs of our processes as subsets of $\mathbb{R}_+\times \mathbb{R}^d$. Let $\rho_{\delta}$ be the metric defined on $\mathbb{R}_+\times \mathbb{R}^d$ via
\begin{align}\label{parabolic metric}
    \rho_{\delta}\left((s,x),(t,y)\right):=\max\{\delta(t,s),\|x-y\|\}, \quad \text{ for all } (s,x),(t,y)\in \mathbb{R}_+\times \mathbb{R}^d.
\end{align}
For $\beta>0$ and $G\subseteq \mathbb{R}_+\times\mathbb{R}^d$ be a Borel set, the $\beta$-dimensional Hausdorff measure of $G$ in the metric $\rho_{\delta}$ is defined by 
\begin{equation}
\mathcal{H}_{\rho_{\delta}}^{\beta}(G)=\lim_{\eta \rightarrow 0}\inf \left\{\sum_{n=1}^{\infty}\left(2 r_{n}\right)^{\beta}: G \subseteq \bigcup_{n=1}^{\infty} B_{\rho_{\delta}}\left(r_{n}\right), r_{n} \leqslant \eta \right\}.\label{parabolic Hausdorff measure 1}
\end{equation}
Let us also recall the so called $\beta$-Hausdorff content in the metric $\rho_{\delta}$, which is defined as follows
\begin{equation}\label{Haus content}
\mathcal{H}_{\rho_{\delta},\infty}^{\beta}\left(G\right)=\inf \left\{\sum_{i=1}^{\infty}\left|G_i\right|_{\rho_{\delta}}^\beta: G\subset \bigcup_{i=1}^{\infty}G_i\right\},
\end{equation}
where the infimum is taken over all possible covering of $G$, not merely ball coverings, and where $\lvert \cdot \rvert_{\rho_{\delta}}$ denotes the diameter in the metric $\rho_{\delta}$. The corresponding Hausdorff dimension of $G$ is defined and characterized by
\begin{equation}\label{Haus dim product 1}
    \dim_{\rho_{\delta}}(G):=\inf\{\beta\geq 0: \mathcal{H}_{\rho_{\delta}}^{\beta}(G)=0\}=\inf\{\beta\geq 0: \mathcal{H}_{\rho_{\delta},\infty}^{\beta}(G)=0\}.
\end{equation}
For the proof of the second equality above one can see Proposition 4.9 in \cite{Peres&Morters}. The Bessel-Riesz capacity of order $\alpha$ of $G$, in the metric $\rho_{\delta}$, is defined by 
\begin{equation}
    \mathcal{C}_{\rho_{\delta}}^{\alpha}(G)=
    \left[\inf _{\mu \in \mathcal{P}(E)} \int_{\mathbb{R}_+\times\mathbb{R}^d} \int_{\mathbb{R}_+\times \mathbb{R}^d} \frac{\mu(d u) \mu(d v)}{(\rho_{\delta}(u,v))^{\alpha}}\right]^{-1}.\label{rho-delta capacity}
\end{equation}
Using the same arguments \eqref{dim Frost} and \eqref{decomp energy}, used for \eqref{altern dim delta}, we can deduce the following alternative expression of $\dim_{\rho_{\delta}}(\cdot)$ in terms of Bessel-Riesz capacities:
\begin{equation}\label{altern dim rho_delta}
     \dim_{\rho_{\delta}}(G)=\sup\left\{\alpha\geq 0: \mathcal{C}_{\rho_{\delta}}^{\alpha}(G)>0 \right \}.
\end{equation}

\section{Hausdorff dimension for the rank $X(E)$ and graph $Gr_E(X)$}
\subsection{Less irregular Processes}
Let $E\subset [0,1]$ be a general Borel set. Our goal in this subsection is to give minimal conditions on $\gamma$ under which upper and lower bounds for the Hausdorff dimension of the image $X(E)$ and the graph $Gr_E(X)$ are well quantified, and are preferably explicit. When $X$ has stationary  increments and $\mathrm{ind}_{*}(\gamma)>0$, an explicit formula for the Hausdorff dimension of $X(E)$ under the Euclidean metric was provided 
by Hawkes in \cite[Theorem 2]{Hawkes}. The following lemma shows that the condition $\mathrm{ind}_{*}\left(\gamma\right)>0$ generically ensures that $\gamma$ satisfies Condition \textbf{$\mathbf{(C_{0+})}$}. We also saw in the previous section that the converse if far from true, since \textbf{$\mathbf{(C_{0+})}$} allows regularity classes with zero index.  

\begin{lemma}\label{lem check nice cond}
Let $\gamma$ be continuous, increasing, and concave near the origin. If we assume that $\mathrm{ind}_{*}(\gamma) >0 $, then $\gamma$ satisfies Condition \textbf{$\mathbf{(C_{0+})}$}.
\end{lemma}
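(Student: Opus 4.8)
The plan is to establish Condition $\mathbf{(C_\varepsilon)}$, i.e. inequality \eqref{nice condition}, for an arbitrary fixed $\varepsilon>0$; since this is exactly what $\mathbf{(C_{0+})}$ asks for all $\varepsilon>0$, that suffices. The argument hinges on splitting the integral
\[
I(x):=\int_0^{1/2}\gamma(xy)\,\frac{dy}{y\sqrt{\log(1/y)}}
\]
at $y=x^{\theta}$ for a fixed exponent $\theta>0$ depending only on $\gamma$, and treating the two pieces differently: on the outer range $y\in[x^{\theta},1/2]$ I use only the monotonicity of $\gamma$, while on the inner range $y\in(0,x^{\theta})$ I use the polynomial majorant furnished by the positive lower index.

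First I would record two elementary inputs. Since $\mathrm{ind}_{*}(\gamma)>0$, fix $\alpha\in(0,\mathrm{ind}_{*}(\gamma))$ once and for all; by \eqref{def index} we have $\gamma(t)=o(t^{\alpha})$, hence $\gamma(t)\le t^{\alpha}$ for $t$ below some $x_1\in(0,1)$. Because $\gamma$ is also increasing and concave near $0$ with $\lim_{0+}\gamma=0$, the chord from the origin lies below the graph, giving a constant $\mathsf{c}_1>0$ with $\gamma(x)\ge\mathsf{c}_1\,x$ for all small $x$; the same concavity forces $\mathrm{ind}_{*}(\gamma)\le1$, so $\alpha<1$, and I set $\theta:=\tfrac1\alpha-1>0$, chosen precisely so that $\alpha(1+\theta)=1$.

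For the outer piece, bound $\gamma(xy)\le\gamma(x)$ (as $y<1$) and compute, via $u=\log(1/y)$, that $\int_{x^{\theta}}^{1/2}\frac{dy}{y\sqrt{\log(1/y)}}\le 2\sqrt\theta\,\sqrt{\log(1/x)}$; then write $\gamma(x)\sqrt{\log(1/x)}=\gamma(x)^{1-\varepsilon}\bigl(\gamma(x)^{\varepsilon}\sqrt{\log(1/x)}\bigr)$ and note, using $\gamma(x)^{\varepsilon}\le x^{\alpha\varepsilon}$, that the bracket tends to $0$ as $x\downarrow0$, so this piece is $\le2\sqrt\theta\,\gamma(x)^{1-\varepsilon}$ below some $\varepsilon$-dependent threshold. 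For the inner piece, once $x$ is small enough that $x^{1+\theta}\le x_1$, every $y\in(0,x^{\theta})$ satisfies $xy\le x^{1+\theta}\le x_1$, so $\gamma(xy)\le(xy)^{\alpha}$; pulling out $x^{\alpha}$ and bounding $\log(1/y)\ge\theta\log(1/x)$ reduces the remaining integral of $y^{\alpha-1}$ to a constant multiple of $x^{\alpha(1+\theta)}/\sqrt{\log(1/x)}$, which, since $\alpha(1+\theta)=1$, is at most $x/\alpha$ for small $x$. Finally $x\le\mathsf{c}_1^{-1}\gamma(x)\le\mathsf{c}_1^{-1}\gamma(x)^{1-\varepsilon}$ once $\gamma(x)\le1$. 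Adding the two estimates yields $I(x)\le\bigl(2\sqrt\theta+(\alpha\mathsf{c}_1)^{-1}\bigr)\gamma(x)^{1-\varepsilon}$ for all small $x$, which is \eqref{nice condition}.

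I expect no genuine obstacle: the one point requiring care is making the two halves of the split fit, namely choosing $\theta$ so that the power of $x$ produced by the inner piece dominates $1-\varepsilon$ (taking $\theta=1/\alpha-1$ makes that power exactly $1$, which works for every $\varepsilon$ at once), and then using the concavity lower bound $\gamma(x)\gtrsim x$ to turn that power of $x$ back into a power of $\gamma(x)$. Both vanishing factors are just the usual domination of a logarithm by a power, so the only $\varepsilon$-dependent objects in the end are the thresholds $x_\varepsilon$ — indeed the constant $\mathsf{c}_\varepsilon$ can be taken independent of $\varepsilon$.
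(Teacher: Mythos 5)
Your proof is correct, but it takes a genuinely different route from the paper's. The paper integrates by parts and changes variables to rewrite $I(x)=\int_0^{1/2}\gamma(xy)\,\frac{dy}{y\sqrt{\log(1/y)}}$ as $\int_0^{\gamma(x)}\sqrt{\log(1/\gamma^{-1}(u))}\,du$, then uses $u^{1/\alpha}=o(\gamma^{-1}(u))$ (from $\mathrm{ind}_*(\gamma)>\alpha$) to bound this by $2\alpha^{-1/2}\gamma(x)\sqrt{\log(1/\gamma(x))}$, and finally absorbs the square root of the logarithm into $\gamma(x)^{-\varepsilon}$. You instead split the $y$-integral at $x^{\theta}$ with $\theta=1/\alpha-1$, handle the outer range by monotonicity alone (yielding $\gamma(x)\sqrt{\log(1/x)}$, which is comparable to the paper's intermediate bound since $\mathsf{c}_1x\le\gamma(x)\le x^{\alpha}$), and handle the inner range by the polynomial majorant, landing on a term of order $x\lesssim\gamma(x)$. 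Your argument is more elementary in that it avoids Stieltjes integration against $d\gamma$ and the inverse function $\gamma^{-1}$, at the price of invoking the concavity hypothesis explicitly (for the chord bound $\gamma(x)\ge\mathsf{c}_1 x$, which controls the inner piece and forces $\mathrm{ind}_*(\gamma)\le1$), whereas the paper's computation does not actually need concavity; both proofs correctly isolate all $\varepsilon$-dependence in the threshold $x_\varepsilon$, and your observation that the constant $\mathsf{c}_\varepsilon$ can be taken uniform in $\varepsilon$ holds for the paper's version as well.
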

\begin{proof}
By  a change of variable and an integration by part, we obtain that for $x\in (0,1)$ sufficiently small, we have
\begin{align}
I(x)&:=\int_{0}^{1/2}\gamma(xy)\frac{dy}{y\sqrt{\log(1/y)}}\\
&=\int_{0}^{x/2}\sqrt{\log\left(x/y\right)}d\gamma(y)-\sqrt{\log(2)}\gamma(x/2)\nonumber\\
&\leq \int_{0}^{x}\sqrt{\log\left(1/y\right)}d\gamma(y)= \dint_{0}^{\gamma(x)}\sqrt{\log\left(\frac{1}{\gamma^{-1}(u)}\right)}du\nonumber.
\end{align}
Fix an arbitrary $\alpha \in (0,\mathrm{ind}(\gamma))$, then $\gamma(x)=o(x^{\alpha})$ near zero and so $u^{1/\alpha}=o\left(\gamma^{-1}(u)\right)$ near zero also. Therefore, for any fixed $\varepsilon\in(0,1)$, there exists $\mathsf{c}_\varepsilon<\infty$ and $x_{\varepsilon}\in (0,1/2]$ such that for all $x \in (0,x_{\varepsilon}]$,
\begin{align*}
    I(x) & \leq \alpha^{-1/2}\, \int_{0}^{\gamma(x)}\sqrt{\log\left(1/u\right)}du\\
    & = \alpha^{-1/2}\left(\gamma(x)\, \sqrt{\log\left(\frac{1}{\gamma(x)}\right)}+\dint_0^{\gamma(x)}\frac{dy}{\sqrt{\log(1/y)}}\right)\\
    & \leq 2\, \alpha^{-1/2}\, \gamma(x)\, \sqrt{\log\left(\frac{1}{\gamma(x)}\right)} \\ & <\mathsf{c}_\varepsilon \left(\gamma(x)\right)^{1-\varepsilon}.
\end{align*}
Since $\varepsilon$ is arbitrarily small, the proof is complete.
\end{proof}
We relax the stationarity of increments, by assuming only that $\delta$, the canonical metric of $X$, is commensurate with $\gamma$, i.e.  $\gamma$ satisfies relations \eqref{commensurate}. Then we have the following result, which also eliminates the need for a positive index. 
\begin{theorem}\label{Hausd dim image}
Let $X:[0,1]\rightarrow \R^d$ be a continuous $d$-dimensional centered Gaussian process with i.i.d. scalar components who all share a  canonical metric $\delta$ satisfying Condition $\mathbf{(\Gamma)}$, i.e. relations \eqref{commensurate}. The following statements hold. \begin{itemize}
    \item[i)] 
For any Borel set $E\subset [0,1]$,    \begin{align}\label{dim lower bnd}
    \dim_{\operatorname{euc}}(X(E)) \geq d\wedge\dim_{\delta}(E)\quad \text{ a.s.}
    \end{align}
    and     
    \begin{align}\label{dim lwr bnd graph} 
    \dim_{\rho_{\delta}}\left(Gr_E(X)\right)\geq \dim_{\delta}(E) \quad \text{ a.s.} 
    \end{align}
\item[ii)] Assume in addition that the function $\gamma$ in Condition $\mathbf{(\Gamma)}$ satisfies 
Condition \textbf{$\mathbf{(C_{\varepsilon})}$} for some $\varepsilon\in (0,1)$. Then for any Borel set $E\subset [0,1]$,             
\begin{align}\label{dim upper bnd}
    \dim_{\delta}(E)\wedge d \leq \dim_{\operatorname{euc}}(X(E))\leq d\wedge \left(\dim_{\delta}(E)+\varepsilon\,d\right)\quad \text{ a.s.}  
\end{align}
    and     
\begin{align}\label{dim uppr bnd graph}    
    \dim_{\delta}(E) \leq \dim_{\rho_{\delta}}\left(Gr_E(X)\right)\leq \dim_{\delta}(E)+\varepsilon\,d \quad \text{ a.s.} 
\end{align}
    where $\dim_{\operatorname{euc}}(\cdot)$ denote the Hausdorff dimension associated with the Euclidean metric.
\end{itemize}
\end{theorem}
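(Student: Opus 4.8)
The plan is to prove the four inequalities in turn, treating the image and the graph in parallel since the arguments run essentially the same way once one works in the product metric $\rho_\delta$; indeed, the image statements follow from the graph statements by projecting $\mathbb{R}_+\times\mathbb{R}^d\to\mathbb{R}^d$, which is $1$-Lipschitz for $\rho_\delta\to\|\cdot\|$ and hence does not increase Hausdorff dimension, so the only extra work for the image is the truncation by $d$ (which is automatic since $X(E)\subset\mathbb{R}^d$). First I would establish the lower bounds \eqref{dim lwr bnd graph} and \eqref{dim lower bnd} via the energy method: fix $\alpha<\dim_\delta(E)$, use \eqref{altern dim delta} to get a probability measure $\nu$ on $E$ with finite $\alpha$-energy in the metric $\delta$, push it forward to a random measure $\mu_\omega$ on $Gr_E(X)$ (resp. on $X(E)$) via $t\mapsto(t,X(t))$ (resp. $t\mapsto X(t)$), and estimate $\mathbb{E}[\mathcal{E}_{\rho_\delta,\alpha}(\mu_\omega)]$. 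The key Gaussian input here is the two-point local non-determinism Lemma \ref{lem two pts LND}, which (together with independence of the $d$ coordinates) gives a lower bound $\gtrsim\gamma^2(|t-s|)\asymp\delta^2(t,s)$ on the conditional variance, so that the joint density of $(X(t),X(s))$ is bounded above by $C\,\delta(t,s)^{-d}$ near the diagonal; one then gets $\mathbb{E}\!\int\!\int \rho_\delta((t,X(t)),(s,X(s)))^{-\alpha}\,\nu(dt)\nu(ds)<\infty$ by splitting the inner expectation according to whether $\|X(t)-X(s)\|$ dominates $\delta(t,s)$ or not and integrating the Gaussian density, reducing to $\int\!\int\delta(t,s)^{-\alpha}\nu(dt)\nu(ds)<\infty$ for the graph (for the image one needs $\alpha<d$ as well, which is where the $d\wedge$ enters). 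By \eqref{altern dim rho_delta}/\eqref{altern dim delta} this yields the lower bounds, and letting $\alpha\uparrow\dim_\delta(E)$ finishes part (i). I should be careful to handle the case $\dim_\delta(E)=0$ trivially and to note that these computations do not use any regularity of $\gamma$ beyond Hypothesis \ref{Hyp2}.

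For part (ii), the lower bounds are already in (i), so only the upper bounds \eqref{dim upper bnd} and \eqref{dim uppr bnd graph} remain, and here is where Condition $\mathbf{(C_\varepsilon)}$ and Lemma \ref{lem upper bound} enter. Fix $\beta>\dim_\delta(E)$; then $\mathcal{H}^\beta_\delta(E)=0$, so for each $\eta>0$ there is a cover of $E$ by $\delta$-balls $B_\delta(t_n,r_n)$ with $r_n\le\eta$ and $\sum_n(2r_n)^\beta$ arbitrarily small. The image $X(B_\delta(t_n,r_n)\cap E)$ has Euclidean diameter at most $2Z(t_n,r_n)$-type quantity controlled by the oscillation of $X$ over $B_\delta(t_n,r_n)$; more efficiently, I would cover each $X(B_\delta(t_n,r_n)\cap I)$ by roughly $\big((r_n+f_\gamma(r_n))/r_n\big)^d$ Euclidean balls of radius $r_n$ and use the small-ball estimate \eqref{estim small ball 1} of Lemma \ref{lem upper bound} to control the expected number of these balls that actually meet the image. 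Under $\mathbf{(C_\varepsilon)}$ we have $f_\gamma(r)\lesssim\gamma(\gamma^{-1}(l^{1/2}r))^{1-\varepsilon}\asymp r^{1-\varepsilon}$, so $r+f_\gamma(r)\asymp r^{1-\varepsilon}$ for small $r$; this is the crucial quantitative consequence of $\mathbf{(C_\varepsilon)}$. Then the expected $\gamma_{\text{target}}$-dimensional content of $X(E)$ for the exponent $\beta_* := \beta + \varepsilon d$ is bounded, up to constants, by $\sum_n (r_n^{1-\varepsilon})^{\beta/(1-\varepsilon)}\cdot$(something) — more precisely one sets up the covering so that $\sum_n (\text{number of Euclidean }r_n\text{-balls})\cdot r_n^{\beta+\varepsilon d}$ is comparable to $\sum_n r_n^{-\varepsilon d}\cdot r_n^{\varepsilon d}\cdot(2r_n)^{\beta}=\sum_n (2r_n)^\beta\to 0$ after taking expectations and using \eqref{estim small ball 1}. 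Hence $\mathbb{E}[\mathcal{H}^{\beta+\varepsilon d}_{\operatorname{euc},\infty}(X(E))]=0$, so $\dim_{\operatorname{euc}}(X(E))\le\beta+\varepsilon d$ a.s.; letting $\beta\downarrow\dim_\delta(E)$ gives \eqref{dim upper bnd}. For the graph one covers $Gr_E(X)\cap(B_\delta(t_n,r_n)\times\mathbb{R}^d)$ by $\rho_\delta$-balls of radius $\asymp r_n^{1-\varepsilon}$ — there are $\asymp(r_n^{1-\varepsilon}/r_n^{1-\varepsilon})^d=$ order-$1$ many in the space direction times... actually one needs $\asymp (r_n / r_n)^{0}$... here the bookkeeping is: the time-fiber has $\rho_\delta$-diameter $\le 2r_n$ but the space-spread is $\asymp r_n^{1-\varepsilon}\gg r_n$, requiring $\asymp (r_n^{1-\varepsilon}/r_n)^d=r_n^{-\varepsilon d}$ balls of radius $r_n$, or alternatively order-$1$ balls of the larger radius $r_n^{1-\varepsilon}$, and one checks $\sum_n (r_n^{1-\varepsilon})^{(\beta+\varepsilon d)} \lesssim \sum_n r_n^{\beta} \to 0$ using $\varepsilon(\beta+\varepsilon d)\le \varepsilon d$ plus $(1-\varepsilon)(\beta+\varepsilon d)\ge \beta$ for $\beta\ge\dim_\delta(E)>0$; combined with \eqref{estim small ball 1} to show the expected number of occupied space-slices is controlled, this gives \eqref{dim uppr bnd graph}.

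The main obstacle, I expect, is the upper-bound covering argument: organizing the random cover so that the expectation of the Hausdorff content is genuinely small requires carefully coupling the geometric covering numbers (which are deterministic, coming from $f_\gamma$ and $\mathbf{(C_\varepsilon)}$) with the probabilistic occupation estimate \eqref{estim small ball 1}, and making sure the exponents match: one wants the number of sub-balls per $\delta$-ball times the probability each is hit times the sub-ball diameter raised to the target exponent to telescope back to $\sum(2r_n)^\beta$. One must also be careful that Lemma \ref{lem upper bound} is stated on a compact sub-interval $I=[a,b]\subset(0,\infty)$ with $z\in[-M,M]^d$, so I would first reduce to $E\subset[a,1]$ for fixed small $a>0$ (handling the behavior near $t=0$ by a separate, easy argument using $\gamma(t)\to 0$, or by monotone limits $a\downarrow 0$), and restrict attention to the event that $\sup_{t\in I}\|X(t)\|\le M$, taking $M\to\infty$ at the end via a union over the events $\{M-1<\sup\|X\|\le M\}$. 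A final subtlety is that all four inequalities hold \emph{almost surely} for each fixed $E$, with the exceptional null set depending on $E$; the statement as given claims exactly this, so no extra uniformity over $E$ is needed, but I would remark that the lower bound's null set can in fact be taken uniform over a countable dense family of $\alpha$'s and the upper bound's over a countable family of $\beta$'s, which is what makes the "a.s. constant" conclusion meaningful.
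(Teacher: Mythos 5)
Your overall strategy coincides with the paper's: the lower bounds are obtained by the energy method (push a measure of finite $\alpha$-energy forward onto the image/graph and bound the expected energy), and the upper bounds by a covering argument in which the deterministic cover of $E$ by small $\delta$-balls (the paper uses $\gamma$-dyadic intervals $I_{j,n}=[(j-1)\gamma^{-1}(2^{-n}),j\gamma^{-1}(2^{-n})]$) is refined by Euclidean dyadic cubes whose occupation probability is controlled by Lemma \ref{lem upper bound} together with $\mathbf{(C_\varepsilon)}$; your exponent bookkeeping (all $2^{nd}$ cubes in a compact window, each hit with probability $\lesssim 2^{-n(1-\varepsilon)d}$, contributing $2^{-n(\zeta+\varepsilon d)}$ each, telescoping to $\sum M_n 2^{-n\zeta}$) matches \eqref{estim content}, and the image bound is deduced by projection exactly as in the paper.

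One substantive correction to the lower bound: you should not route the argument through the two-point local non-determinism Lemma \ref{lem two pts LND}. That lemma requires Hypothesis \ref{Hyp2}, which is \emph{not} among the assumptions of part (i), and it only holds for $s,t$ in a compact $[a,b]$ with $|t-s|\leq\varepsilon$, which would force an extra localization. It is also unnecessary: the energy computation needs only the law of the increment $X(t)-X(s)$, not the joint law of the pair, and by Condition $\mathbf{(\Gamma)}$ this increment is exactly a centered Gaussian vector with per-coordinate standard deviation $\delta(t,s)$, so $\mathbb{E}\bigl[\lVert X(t)-X(s)\rVert^{-\zeta}\bigr]=\mathsf{c}_\zeta\,\delta(t,s)^{-\zeta}$ with $\mathsf{c}_\zeta=\mathbb{E}\lVert Z\rVert^{-\zeta}<\infty$ for $\zeta<d$ (and for the graph one even has the pointwise bound $\rho_\delta\geq\delta(t,s)$, so no Gaussian input is needed beyond finiteness of the analogous constant). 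This is what the paper does, and it is why part (i) holds with no regularity hypothesis on $\gamma$ at all. A second, more cosmetic point: your ``more efficient'' cover of each $X(B_\delta(t_n,r_n))$ by only $\asymp r_n^{-\varepsilon d}$ candidate balls is a random collection (its validity is an event of large probability, not a sure event), so to make the expectation of the Hausdorff content rigorous you should, as the paper does, enumerate the full deterministic grid of cubes in a fixed compact window and sum the occupation probabilities; your final arithmetic is the same either way.
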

\begin{corollary}\label{joli corollaire}
 Let $X:[0,1]\rightarrow \mathbb{R}^d$ be a Gaussian process as in Theorem \ref{Hausd dim image} such that $\delta$  Condition $\mathbf{(\Gamma)}$. If $\gamma$ satisfies Condition \textbf{$\mathbf{(C_{0+})}$} then we have 
 \begin{align}
     \dim_{\operatorname{euc}}(X(E))=d\wedge \dim_{\delta}(E) \quad \text{ and } \quad \dim_{\rho_{\delta}}\left(Gr_E(X)\right)= \dim_{\delta}(E) \quad \text{ almost surely}.
 \end{align}
\end{corollary}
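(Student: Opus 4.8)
The plan is to deduce this directly from Theorem \ref{Hausd dim image} by a limiting argument in $\varepsilon$. The key observation is that Condition $(\mathbf{C_{0+}})$ is, by definition, precisely the assertion that $\gamma$ satisfies Condition $(\mathbf{C_\varepsilon})$ for \emph{every} $\varepsilon \in (0,1)$. Hence part (ii) of Theorem \ref{Hausd dim image} is available with $\varepsilon$ taken as small as we like, while part (i) already supplies the matching lower bounds under Condition $\mathbf{(\Gamma)}$ alone.

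First I would record the lower bounds: part (i) of Theorem \ref{Hausd dim image} gives $\dim_{\operatorname{euc}}(X(E)) \geq d \wedge \dim_{\delta}(E)$ and $\dim_{\rho_{\delta}}(Gr_E(X)) \geq \dim_{\delta}(E)$ almost surely. These are exactly the claimed values, so only the reverse inequalities remain.

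Next, for the upper bounds, I would fix a sequence $\varepsilon_n \downarrow 0$ with $\varepsilon_n \in (0,1)$. For each $n$, part (ii) of Theorem \ref{Hausd dim image} (applicable because $(\mathbf{C_{0+}})$ implies $(\mathbf{C_{\varepsilon_n}})$) provides a probability-one event $\Omega_n$ on which $\dim_{\operatorname{euc}}(X(E)) \leq d \wedge (\dim_{\delta}(E) + \varepsilon_n d)$ and $\dim_{\rho_{\delta}}(Gr_E(X)) \leq \dim_{\delta}(E) + \varepsilon_n d$. Setting $\Omega_\infty := \bigcap_{n \geq 1} \Omega_n$, which still has probability one as a countable intersection of almost sure events, both upper bounds hold simultaneously for every $n$ on $\Omega_\infty$. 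Letting $n \to \infty$ yields $\dim_{\operatorname{euc}}(X(E)) \leq d \wedge \dim_{\delta}(E)$ and $\dim_{\rho_{\delta}}(Gr_E(X)) \leq \dim_{\delta}(E)$ on $\Omega_\infty$. Combining with the lower bounds from the previous step gives the two equalities almost surely.

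The only subtlety — and it is a mild one — is that the exceptional null set in Theorem \ref{Hausd dim image}(ii) may depend on $\varepsilon$, which is why one must first restrict to a countable family $\{\varepsilon_n\}$ and intersect the corresponding events before passing to the limit; this is the sole role of $\Omega_\infty$. I do not anticipate any genuine obstacle: the corollary is essentially just a packaging of the two halves of Theorem \ref{Hausd dim image} together with this routine countable-intersection argument.
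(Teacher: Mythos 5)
Your proposal is correct and is exactly the argument the paper intends: the corollary is stated without a separate proof precisely because it follows from Theorem \ref{Hausd dim image} by combining the lower bounds of part (i) with the upper bounds of part (ii) applied along a sequence $\varepsilon_n \downarrow 0$, using that $(\mathbf{C_{0+}})$ means $(\mathbf{C_{\varepsilon}})$ holds for every $\varepsilon$. Your remark about intersecting the countably many probability-one events before passing to the limit is the right (and only) point of care.
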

\noindent Before proving this Theorem \ref{Hausd dim image} we introduce some notation. Let $\mathfrak{I}=\bigcup_{n=0}^{\infty}\mathfrak{I}_n$ be the class of all $\gamma$-dyadic subintervals of $[0,1]$ such that the elements of each subclass $\mathfrak{I}_n$ are of the form 
$$
I_{j,n}:=[(j-1)\gamma^{-1}(2^{-n}), j\gamma^{-1}(2^{-n})],
$$
for $n \in \mathbb{N}$ and $1 \leq j \leq \left(\gamma^{-1}( 2^{-n})\right)^{-1}$. By using relations \eqref{commensurate} and substituting  $\delta$-balls by $\gamma$-dyadic intervals in the definition of Hausdorff measure, we obtain another family of outer measures $\{\widetilde{H}_{\delta}^{\beta}(\cdot)\,:\,\beta>0\}$. Making use of relations \eqref{commensurate} we can check that for all fixed $\beta$, the measures $\mathcal{H}_{\delta}^\beta(\cdot)$ and $\widetilde{H}_{\delta}^{\beta}(\cdot)$ are commensurate and then are equivalent. The detailed proof of this equivalence, omitted here for brevity, follows the  lines of Taylor and Watson \cite{Tay&Wats} p. 326., 
 which applies immediately due to the concavity of $\gamma$ on a neighborhood of $0$.

\begin{proof}[Proof of Theorem \ref{Hausd dim image}] We begin by proving $(i)$. 
Let $\zeta<d\wedge \dim_{\delta}(E)$, then \eqref{altern dim delta} implies that there is a probability measure $\nu$ supported on $E$ such that 
\begin{align}
  \int_E\int_E\frac{\nu(ds)\nu(dt)}{\left(\delta\left(s,t\right)\right)^{\zeta}}<\infty.
\end{align}
Let $\mu:= \nu \circ X^{-1}$ be the image of $\nu$ by the process $X$, then by transfer theorem, Fubini's theorem and scaling property we have
\begin{align}
    \begin{aligned}
    \mathbb{E}\left(\dint_{\mathbb{R}^{2d}}\frac{\mu(dx)\mu(dy)}{\|x-y\|^{\zeta}}\right)&=\dint_{E^2}\mathbb{E}\left(\frac{1}{\|X(t)-X(s)\|^{\zeta}}\right)\nu(ds)\nu(dt)\\
    &=\mathsf{c}_{1,\zeta}\, \dint_{E^2}\frac{\nu(ds)\nu(dt)}{\delta(t,s)^{\zeta}}<\infty,
    \end{aligned}
\end{align}
where $\mathsf{c}_{1,\zeta}:=\mathbb{E}\left(1/\|Z\|^{\zeta}\right)$ with $Z \sim \mathcal{N}(0,I_d)$, which is finite because $\zeta<d$. Then $\mathcal{C}_{\operatorname{euc}}^{\zeta}(X(E))>0$ \,a.s. 
Hence the classical Frostman theorem ensures that $\dim_{\operatorname{euc}}\left(X(E) \right)\geq \zeta$ \,a.s., and letting $\zeta \uparrow d\wedge \dim_{\delta}(E)$ we obtain \eqref{dim lower bnd}. Let us now prove \eqref{dim lwr bnd graph}, let $\zeta < \dim_{\delta}(E)$ be arbitrary and let $\nu$ be the probability measure such that $\mathcal{E}_{\delta,\alpha}(\nu)<\infty$. Let $\widetilde{\mu}:= \nu \circ Gr(X)^{-1}$ be the image of $\nu$ by the map $t\mapsto (t,X(t))$, then again transfer theorem, Fubini's theorem and scaling property imply that
\begin{align}
    \begin{aligned}
\mathbb{E}\left(\dint_{(\mathbb{R}_+\times\mathbb{R}^d)^2}\frac{\widetilde{\mu}(dx)\widetilde{\mu}(dy)}{\left(\rho_{\delta}((t,x),(s,y)\right)^{\zeta}}\right)
&=\dint_{E^2}\mathbb{E}\left(\frac{1}{\left(\delta(t,s)\vee\|X(t)-X(s)\|\right)^{\zeta}}\right)\nu(ds)\nu(dt)\\
    &=\mathsf{c}_{2,\zeta}\, \dint_{E^2}\frac{\nu(ds)\nu(dt)}{\delta(t,s)^{\zeta}}<\infty,
    \end{aligned}
\end{align}
where $\mathsf{c}_{2,\zeta}:=\mathbb{P}[\lVert Z \rVert \leq 1]+ \mathbb{E}\left[\lVert Z\rVert^{-\zeta}\operatorname{1}_{[\lVert Z \rVert \geq 1]}\right]$ with $Z \sim \mathcal{N}(0,I_d)$, which is finite whenever $\zeta$ is. Then $\mathcal{C}_{\rho_{\delta}}^{\zeta}\left(Gr_E(X)\right)>0$ \,a.s.
Hence \eqref{altern dim rho_delta} implies that $\dim_{\rho_{\delta}}Gr_E(X)\geq \zeta$ \,a.s. and by letting $\zeta \uparrow \dim_{\delta}(E)$ the desired lower bound \eqref{dim lwr bnd graph} follows.

Now let us prove $(ii)$, the lower bounds follow from (i), so it is sufficient to establish the upper bounds. We only prove \eqref{dim uppr bnd graph}, and the assertion in \eqref{dim upper bnd} follows from a projection argument. 
Let $\zeta>\dim_{\delta}(E)$, by definition of Hausdorff dimension we have  ${\mathcal{H}}_{\delta}^{\zeta}(E)=0$ and then $\widetilde{\mathcal{H}}_{\delta}^{\zeta}(E)=0$. Let $\eta>0$ be arbitrary, then there is a family of $\gamma$-dyadic interval $(I_k)_{k\geq 1}$ such that for every $k\geq 1$ there is $n_k\in \mathbb{N}$, $1 \leq j_k \leq \left(\gamma^{-1}( 2^{-n_k})\right)^{-1}$ and $I_k:=[(j_k-1)\,\gamma^{-1}\left(2^{-n_k}\right),j_k\,\gamma^{-1}\left(2^{-n_k}\right)]$ and we have
\begin{align}
    E\subset\bigcup_{k=1}^{\infty}I_k \quad \text{ and }\quad \sum_{k=1}^{\infty}\left|I_k\right|_{\delta
    }^{\zeta}< \eta,\label{cover Hausdorff 1}
\end{align}
where $\left|\cdot\right|_{\delta}$ denote the diameter associated to the metric $\delta$. For all fixed $n\geq 1$, let $M_n$ be the number of indices $k$ for which $n_k=n$, which is obviously finite due to right hand part of \eqref{cover Hausdorff 1}. Let us denote the corresponding $\gamma$-dyadic intervals by $I_i^{n}$ for $i=1,\ldots, M_n$.
It is not hard to check, using the commensurability condition $\mathbf{(\Gamma)}$, i.e. \eqref{commensurate}, that for all $i=1,\ldots,M_n$ we have $ \mathsf{c}_3\, 2^{-n} \leq \left|I_i^n\right|_{\delta}\leq \mathsf{c}_4\, 2^{-n}$ where the constants $\mathsf{c}_3$ and $\mathsf{c}_4$ depend on $l$ only. Then 
\begin{equation}\label{sum approximate Haus}
    \sum_{n=1}^{\infty}M_n 2^{-n\,\zeta}<\eta/\mathsf{c}_3.
\end{equation}

\noindent Let $K\subset \mathbb{R}^{d+1}$ be an arbitrary compact set, we will construct an adequate covering of $Gr_E\left(X\right)\cap K$. To simplify we suppose that $K=[0,1]^{d+1}$. For every $n\geq 1$ let $\mathfrak{C}_n$ be the collection of Euclidean dyadic subcubes of $[0,1]^d$ of side length $2^{-n}$, and for all $i=1,...,M_n$ let $\mathcal{G}_{n,i}$ be the collection of cubes $C\in \mathfrak{C}_n$ such that $X\left(I_i^{n}\right)\cap C\neq \emptyset$. 
Then we have 
\begin{equation}
    \begin{aligned}\label{inclusion intersection 0}
        Gr_E\left(X\right)\cap [0,1]^{d+1}\subseteq \bigcup_{n=1}^{\infty}\,\,\bigcup_{i=1}^{M_n}\bigcup_{\,\,C\in \mathcal{G}_{n,i}} I_{i}^n\times C.
    \end{aligned}
\end{equation}
Let $\varepsilon\in (0,1)$ such that $\gamma$ satisfies Condition \textbf{$\mathbf{(C_{\varepsilon})}$}. For all $n\geq 1$, $i\in \{1,...,M_n\}$ and $ C\in \mathfrak{C}_n$, \eqref{estim small ball 1} and \eqref{nice condition} imply that 
\begin{align}\label{chance of being in cover}
    \mathbb{P}\left\{C\in \mathcal{G}_{n,i}\right\}\leq \mathsf{c}_5 2^{-n\,(1-\varepsilon)d},
\end{align}
where $\mathsf{c}_5$ depends on $\varepsilon$ only. Combining \eqref{sum approximate Haus}, \eqref{inclusion intersection 0} and \eqref{chance of being in cover} we obtain 
\begin{align}
\begin{aligned}
\mathbb{E}\left(\mathcal{H}_{\rho_{\delta},\infty}^{\zeta+\varepsilon\,d}\left(Gr_E(X)\cap [0,1]^d \right)\right)&\leq \mathsf{c}_6\, \sum_{n=1}^{\infty}\sum_{i=1}^{M_n}\sum_{I\in \mathfrak{C}_n}2^{-n\left(\zeta+\varepsilon\,d\right)}\mathbb{P}\{C\in \mathcal{G}_{n,i}\}\\
&\leq \mathsf{c}_7 \sum_{n=1}^{\infty}M_n\operatorname{Card}(\mathfrak{C}_n) 2^{-n(d+\zeta)}\\
        &=\mathsf{c}_7\sum_{n=1}^{\infty}M_n 2^{-n\, \zeta}<\mathsf{c}_8\,\eta,\label{estim content}
\end{aligned}
\end{align}
where $\mathcal{H}_{\rho_{\delta},\infty}^{\alpha}(\cdot)$ represent the $\alpha$-Hausdorff content in the metric $\rho_{\delta}$ which is defined in \eqref{Haus content} and the constants $\mathsf{c}_6$, $\mathsf{c}_7$ and $\mathsf{c}_8$ depend on $\varepsilon$ only. Since $\eta>0$ is arbitrary we get that 
$$
\mathcal{H}_{\rho_{\delta},\infty}^{\zeta+\varepsilon\,d}\left(Gr_E(X)\cap K \right)=0 \quad \text{ almost surely},
$$
and therefore, by using \eqref{Haus dim product 1}, we have $\dim_{\rho_{\delta}}\left(Gr_{E}(X)\cap K \right)\leq \zeta+\varepsilon\,d$\, a.s. for all $K\subset \mathbb{R}_+\times \mathbb{R}^d$. Hence by the countable stability of Hausdorff dimension and by making $\varepsilon\downarrow 0$ and $\zeta\downarrow \dim_{\delta}(E)$ we get the desired upper bound in \eqref{dim uppr bnd graph}. 

Finally, the upper bound in \eqref{dim upper bnd} follows directly from the facts that Hausdorff dimension does not increase by taking projection.
\end{proof}

Here are some interesting cases that are covered by our study in this section
\begin{example} 
\,
\begin{itemize}
    \item[\textbf{i)}] \textit{Lipschitz scale}: Let $\gamma$ be defined near $0$ by 
$\gamma(r):=r\,L(r),$
where $L(\cdot)$ is a slowly varying function at $0$ with $\lim_{0+}L\left(r\right)\in (0,+\infty]$, and let $\delta$ such that relations \eqref{commensurate} (Condition $\mathbf{(\Gamma)}$) are satisfied. Then it is not difficult to show that $\dim_{\delta}(E)=\dim_{\operatorname{euc}}(E)$,
where $\dim_{\operatorname{euc}}(\cdot)$ denote the Hausdorff dimension associated to the Euclidean metric on $\mathbb{R}_+$.
\item[\textbf{ii)}]
\textit{Hölder scale}: For  $\alpha\in (0,1)$ let $\gamma$ be defined defined near $0$ by $\gamma(r)=r^{\alpha}L\left(r\right)$, where $L(\cdot)$ is a slowly varying function at $0$, and let $\delta$ satisfying $\mathbf{(\Gamma)}$. Then it can be shown easily, using the slowly varying property of $L(\cdot)$, that  $\dim_{\delta}(E)=\dim_{\operatorname{euc}}(E)/\alpha$.
\item[\textbf{iii)}]\textit{Beyond the Hölder scale}: For $q\in (0,1)$ let $\gamma$ be defined by $ \gamma_{q}(x):=\exp\left(-\log^{q}(1/x)\right)$ and $\delta$ such that \eqref{commensurate} holds.
First, note that for any Borel set $E\subset [0,1]$ such that $\dim_{\delta}(E)<\infty$, by using the fact that $r^{\alpha}=o\left(\gamma(r)\right)$ for any $\alpha>0$, one can show that $\dim_{euc}(E)=0$. Hence the Euclidean metric is not sufficient to describe the geometry of some Borel sets. 
\end{itemize}
\end{example}

\subsection{Most irregular processes (LogBm)}
Now, when $\gamma(x) = \log^{-\beta}(1/x)$\, for some $\beta > 1/2$, Condition \textbf{$\mathbf{(C_{0+})}$} fails to holds, we only have   
$$
\int_0^{1/2}\gamma(xy)\frac{dy}{y\sqrt{\log(1/y)}} \asymp \gamma(x)\, 
\sqrt{\log(1/x)} = \left(\gamma(x)\right)^{1-1/2\beta},
$$
which means that $\gamma$ satisfies Condition \textbf{$\mathbf{(C_{1/2\mathbf{\beta}})}$}, but none of Conditions \textbf{$\mathbf{(C_{\varepsilon})}$} for $\varepsilon\in (0,1/2\beta)$ are satisfied. 
On the other hand, since $\delta(t,s) \asymp \log^{-\beta}\left(\frac{1}{|t-s|}\right)$, it follows that 
\begin{equation}\label{dimlog}
\dim_{\delta}(E)=\dim_{\log}(E)/\beta,
\end{equation}
where $\dim_{\log}(\cdot)$ is the Hausdorff dimension in the metric $\delta_{\log}(t,s):=\log^{-1}(1/|t-s|)$. Therefore Theorem \ref{Hausd dim image} ensures that 
\begin{equation}\label{dim upr bnd logBM}
\frac{\dim_{\log}(E)}{\beta}\wedge d \leq \dim_{\operatorname{euc}}X(E)\leq\frac{1}{\beta}\left(\dim_{\log}(E)+\frac{d}{2}\right)\wedge d\,
\end{equation}
and 
\begin{equation}\label{dim upr bnd logBM 2}
\frac{\dim_{\log}(E)}{\beta}\leq \dim_{\rho_{\delta}} Gr_E(X)\leq \frac{1}{\beta}\left(\dim_{\log}(E)+\frac{d}{2}\right).
\end{equation}
The upper bounds above might be improved, by using an alternative covering argument based on the uniform modulus of continuity of $X$. This is what the following proposition shows  
\begin{proposition}
Let $X$ be a $d$-dimensional Gaussian process such that the canonical metric $\delta$ is commensurate with $\gamma(r)=\log^{-\beta}(1/r)$ for some $\beta>1/2$. Then almost surely
\begin{equation}\label{uppr bnd dim logBm}
\dim_{\operatorname{euc}} X(E)\leq \frac{\dim_{\log}(E)}{\beta -1/2} \wedge d \quad \textup{ and } \quad \dim_{\rho_{\delta}} Gr_E(X)\leq \frac{\dim_{\log}(E)}{\beta -1/2},
\end{equation}
for all $E\subset [0,1]$.
\end{proposition}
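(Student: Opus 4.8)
The plan is to bypass the capacity/chaining machinery and instead exploit the (sharp) almost sure uniform modulus of continuity of $X$. Since $\delta$ is commensurate with $\gamma(r)=\log^{-\beta}(1/r)$, the Dudley--Fernique theory recalled in Section \ref{PrelimSec}, in its almost sure form (see \cite{Adler 2} and \cite[Ch.~7]{Marcus-Rosen}, arguing as in the proof of Lemma \ref{lem upper bound} via \cite[Lemma 7.2.2]{Marcus-Rosen}), produces an almost sure event $\Omega_0$, a random constant $C<\infty$, and a deterministic $r_{\ast}>0$ such that on $\Omega_0$,
\[
\|X(t)-X(s)\|\ \le\ C\,\gamma(|t-s|)\,\log^{1/2}\!\big(1/|t-s|\big)\ =\ C\,\log^{-(\beta-1/2)}\!\big(1/|t-s|\big),\qquad 0<|t-s|\le r_{\ast}.
\]
The crucial point is that $\Omega_0$ does not depend on $E$, so it suffices to prove both inequalities pointwise on $\Omega_0$ for every $E\subset[0,1]$ at once; and we may assume $\dim_{\log}(E)<\infty$, since otherwise both bounds are trivial ($\le d$, respectively $\le\infty$).

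Fix $\omega\in\Omega_0$ and $E$, let $s>\dim_{\log}(E)/(\beta-1/2)$, and pick $\zeta$ with $\dim_{\log}(E)<\zeta<s(\beta-1/2)$, so $\mathcal{H}^{\zeta}_{\delta_{\log}}(E)=0$. For $\eta>0$ small, choose covering $\delta_{\log}$-balls $I_n$ of $E$ (these are genuine intervals, of Euclidean length $\ell_n$) with nominal diameters $d_n\le\eta$ and $\sum_n d_n^{\zeta}<\eta$; shrinking $\eta$ forces $\ell_n\to0$ uniformly, so eventually $\ell_n\le r_{\ast}$, and moreover the $\delta_{\log}$-diameter $\log^{-1}(1/\ell_n)$ of $I_n$ is then $\le d_n$. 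The modulus of continuity therefore gives $|X(I_n)|_{\operatorname{euc}}\le C\log^{-(\beta-1/2)}(1/\ell_n)\le C\,d_n^{\,\beta-1/2}$. Hence $X(E)\subset\bigcup_n X(I_n)$ with
\[
\sum_n|X(I_n)|_{\operatorname{euc}}^{\,\zeta/(\beta-1/2)}\ \le\ C^{\,\zeta/(\beta-1/2)}\sum_n d_n^{\,\zeta}\ <\ C^{\,\zeta/(\beta-1/2)}\,\eta,
\]
and since $\zeta/(\beta-1/2)<s$ and all the diameters involved are $\le1$, letting $\eta\to0$ yields $\mathcal{H}^{s}_{\operatorname{euc},\infty}(X(E))=0$, hence $\dim_{\operatorname{euc}}X(E)\le s$. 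Letting $s\downarrow\dim_{\log}(E)/(\beta-1/2)$ and combining with the trivial bound $\dim_{\operatorname{euc}}X(E)\le d$ (from $X(E)\subset\mathbb{R}^d$) gives the first inequality in \eqref{uppr bnd dim logBm}.

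For the graph, cover $Gr_E(X)$ by the sets $I_n\times X(I_n)$. By the definition \eqref{parabolic metric} of $\rho_\delta$, Condition $\mathbf{(\Gamma)}$, and the previous estimate,
\[
|I_n\times X(I_n)|_{\rho_\delta}=\max\big\{|I_n|_\delta,\ |X(I_n)|_{\operatorname{euc}}\big\}\le\max\big\{\sqrt{l}\,\log^{-\beta}(1/\ell_n),\ C\,d_n^{\,\beta-1/2}\big\}\le \mathsf{c}\,d_n^{\,\beta-1/2},
\]
using $\log^{-\beta}(1/\ell_n)\le d_n^{\,\beta}\le d_n^{\,\beta-1/2}$ for $d_n$ small. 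The same summation as above gives $\sum_n|I_n\times X(I_n)|_{\rho_\delta}^{\,\zeta/(\beta-1/2)}<\mathsf{c}^{\,\zeta/(\beta-1/2)}\eta$, so $\mathcal{H}^{s}_{\rho_\delta,\infty}(Gr_E(X))=0$ for every $s>\dim_{\log}(E)/(\beta-1/2)$, and the characterization \eqref{Haus dim product 1} gives $\dim_{\rho_\delta}Gr_E(X)\le\dim_{\log}(E)/(\beta-1/2)$.

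I expect the main obstacle to lie not in the covering/change-of-gauge bookkeeping (which mirrors the proof of Theorem \ref{Hausd dim image}(ii)), but in pinning down the sharp almost sure modulus of continuity for a process whose canonical metric is only \emph{commensurate} with $\log^{-\beta}(1/|t-s|)$ rather than equal to it: one must check that the Dudley entropy estimate, and in particular its upgrade from an $L^{1}$ bound to an almost sure uniform bound in the spirit of \cite[Ch.~7]{Marcus-Rosen}, survives the passage through the commensurability constants without degrading the exponent $\beta-1/2$ in the gauge $h(r)=\log^{-(\beta-1/2)}(1/r)$. Once that modulus is secured with the correct exponent, the remainder is the routine argument given above.
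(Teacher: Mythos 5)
Your proposal is correct and follows essentially the same route as the paper: both arguments rest on the almost sure uniform modulus of continuity $\Phi_{\gamma}(r)=O\bigl(\log^{-(\beta-1/2)}(1/r)\bigr)$ (the paper cites \cite[Theorem 7.2.1]{Marcus-Rosen}, which already handles the commensurability issue you flag, since the entropy bound only needs the upper half of Condition $\mathbf{(\Gamma)}$), combined with a covering of $E$ by $\delta_{\log}$-balls whose images under $X$ have Euclidean diameter $\lesssim r^{\beta-1/2}$, and a Hausdorff-content computation with the gauge change $\xi\mapsto\xi/(\beta-1/2)$. The only cosmetic difference is that the paper writes out the covering estimate for the graph and obtains the image bound implicitly (by projection), whereas you treat the image directly; both versions correctly yield a negligible set independent of $E$, as noted in Remark \ref{Req unif uppr bnd}.
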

\begin{proof}
First, by relations
\eqref{commensurate} and the fact that $\gamma$ is increasing near the origin with $\gamma(0)=0$, we have that 
\begin{equation}\label{mod cont log scale}
    \Phi_{\gamma}(r):= \gamma(r) \sqrt{\log(1/r)}+\dint_{0}^{r}\frac{\gamma( y)}{y\sqrt{\log(1/r)}}dy,
\end{equation}
is a uniform modulus of continuity for $X$, see for example \cite[Theoerem 7.2.1 p. 304]{Marcus-Rosen}). Then there is $\Omega_0\subset\Omega$ such that $\mathbb{P}(\Omega_0)=1$ and  for all $\omega \in \Omega$ there exists  a random number $\eta_{0}(\omega)\in (0,1)$
such that 
\begin{align}\label{uniform modulu}
    \sup_{|t-s|\leq \eta}| X(t)-X(s)| \leq \mathsf{c}_1\, \Phi_{\gamma}(\eta) \quad \text{ for all $0\leq \eta<\eta_0(\omega)$},
\end{align}
where $\mathsf{c}_1$ is a positive constant. 
Since $\Phi_{\gamma}(\eta)=O\left(\log^{-(\beta-1/2)}(\eta)\right)$, then \eqref{uniform modulu} ensures that for all $0<r<\log^{-1}(\frac{1}{\eta_0(\omega)})$ \,the image of any ball $B_{\delta_{\log}}(t,r)$ by $X(\cdot,\omega)$ has a diameter smaller than $\mathsf{c}_1 (2r)^{\beta-1/2}$. Let $\omega\in \Omega_0$ be fixed and let $E\subseteq [0,1]$ such that $\dim_{\delta}(E)<\infty$. Then for any $\xi>\dim_{\log}(E)$, there is a covering of $E$ by balls $\left\{B_{\delta_{\log}}\left(t_i,r_i\right):i\geq 1\right\}$ such that $\sum_{i=1}^{\infty}(2r_i)^{\xi}\leq \varepsilon$ for some $\varepsilon$ arbitrarily small which we choose such that $\varepsilon^{1/\xi}\leq 2\log^{-1}(\frac{1}{\eta_0(\omega)})$, then $Gr_E(X)$ is covered by the family $\left\{B_{\delta_{\log}}(t_i,r_i)\times X\left(B_{\delta_{\log}}(t_i,r_i)\right):i\geq 1\right\}$ and we have  
\begin{align*}
\mathcal{H}_{\rho_{\delta},\infty}^{\xi/(\beta-1/2)}\left(Gr_E(X)\right)&\leq \sum_{i=1}^{\infty}\left(\left|B_{\delta_{\log}}(t_i,r_i)\times X\left(B_{\delta_{\log}}(t_i,r_i)\right)\right|_{\rho_{\delta}}\right)^{\xi/(\beta-1/2)}\\
&\leq \mathsf{c}_2\,\sum_{i=1}^{\infty}(2r_i)^{\xi}\leq \mathsf{c}_2\, \varepsilon.
\end{align*}
Since $\varepsilon$ is arbitrarily small we get $\mathcal{H}_{\rho_{\delta},\infty}^{\xi/(\beta-1/2)}\left(Gr_E(X)\right)=0$ and consequently $\dim_{\rho_{\delta}}Gr_E(X)\leq \xi/(\beta-1/2)$. By letting $\xi \downarrow \dim_{\log}(E)$ the proof is complete.
\end{proof}
\begin{remark}\label{Req unif uppr bnd}
\hspace{1in}
\begin{itemize}
    \item[i)] The upper bounds in \eqref{uppr bnd dim logBm} are uniform in the sense that the negligible set does not depend on $E$. The covering method used in this proof can be adapted to show that, under the following stronger condition $(\widetilde{\textbf{C}}_{0+})$: ``$\Phi_{\gamma}(r)=o\left(\gamma^{1-\varepsilon}(r)\right)$  near zero for all $\varepsilon>0$ small enough", the upper bounds $\dim_{\delta}(E)\wedge d$ and $\dim_{\delta}(E)$ are uniform for $X(E)$ and $Gr_E(X)$, respectively.
    \item[ii)] Let $E\subset [0,1]$ such that $0<\dim_{\log}(E)<\infty$ then by combining \eqref{dim lower bnd}, \eqref{dim upr bnd logBM} and \eqref{uppr bnd dim logBm} we obtain 
\begin{align*}
        \frac{\dim_{\log}(E)}{\beta}\wedge d\leq \dim_{\operatorname{euc}}(X(E))\leq \frac{\dim_{\log}(E)}{\beta-1/2}\wedge d\quad \text{ a.s. }
\end{align*}
This is due to the fact $\frac{1}{\beta}\left(\dim_{\log}(E)+\frac{d}{2}\right)\geq \frac{\dim_{\log}(E)}{\beta-1/2}\wedge d$. Hence the upper bound nearly agrees with the lower bound near the upper (less irregular) end of the logarithmic scale, i.e. for large $\beta$.
\end{itemize}
\end{remark}
Since the previous methods lead to different upper and lower bounds for Hausdorff dimensions of the image and the graph in the logarithmic scale, it is interesting to ask the following question: Are the random variables $\dim_{\rho_{\delta}} \left(Gr_E(X)\right)$ and $\dim_{\operatorname{euc}} X(E)$ constant almost surely in this logarithmic scale? The main goal of the remaining part of this section is to answer this question.
The key probabilistic idea is to use the Karhunen-Lo\`eve expansion of the process $X$ so that we can show that the random variables $\dim_{\rho_{\delta}}\left(Gr_E(X)\right)$ and $\dim_{\operatorname{euc}} X(E)$ are measurable with respect to a tail sigma-field, and therefore by the zero-one law of Kolmogorov they should be almost surely constants. Let us first recall the Karhunen-Lo\`eve expansion, which says that $X$ has the following $\mathcal{L}^2$-representation, see for example \cite[Theorem 3.7 p. 70 \,\,and (3.25) p. 76]{Adler 2} :
\begin{equation}\label{karh-loe decomp}
    X(t)=\sum_{i=1}^{\infty}\lambda_i^{1/2}\,\xi_i\,\psi_i(t),
\end{equation}
where $(\xi_i)_{i\geq 1}$ is an i.i.d. sequence of  $N(0,I_d)$ standard Gaussian vectors, and $(\lambda_i)_{i\geq 1}$ and $(\psi_i)_{i\geq 1}$ are respectively eigenvalues and eigenvectors of the covariance operator of $Q_X$, defined on $L^2([0,1])$ by
$$
(Q_X\psi)(t)=\dint_0^1Q(s,t)\psi(s)ds,
$$
where $Q(s,t):=\mathbb{E}\left[X_0(s)X_0(t)\right]$ is the covariance function of each component of $X$. 
It is easy to see from \eqref{karh-loe decomp} that the canonical metric $\delta$ has the following representation
\begin{equation}\label{delta}
    \delta(s,t)=\left(\sum_{i=1}^{\infty}\lambda_i(\psi_i(t)-\psi_i(s))^{2}\right)^{1/2}.
\end{equation}
In addition, this formula shows that every eigenfunction $\psi_i$ is continuous, since all eigenfunctions share $\delta$ as a modulus of continuity up to a multiplicative constant, i.e. $|\psi_i(t)-\psi_i(s)| \leq \lambda_i^{-1/2}\delta(s,t)$.

\begin{theorem}\label{0-1 law for the graph}
Let $\{X(t): t\in [0,1] \}$ be a $d$-dimensional continuous Gaussian process as defined in \eqref{d iid copies}, satisfying the commensurability condition $\mathbf{(\Gamma)}$, i.e. relations \eqref{commensurate}, such that 
\begin{equation}\label{suffic cont cond}
    \lim_{r\rightarrow 0}\gamma(r)\,{\log}^{1/2}(1/r)=0.
\end{equation}
Then for all Borel set $E\subset (0,1)$ there is a non-random constant $\mathbf{C}(E)\in [0,+\infty]$ such that   
\begin{align}\label{dim graph constant}
    \dim_{\rho_{\delta}}\left( Gr_E(X)\right)=\mathbf{C}(E) \quad a.s.
\end{align}
\end{theorem}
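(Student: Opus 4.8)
The plan is to express $\dim_{\rho_\delta}(Gr_E(X))$ as a function that is measurable with respect to the tail $\sigma$-field of the Karhunen-Lo\`eve coefficient sequence $(\xi_i)_{i\geq 1}$, and then invoke the Kolmogorov zero-one law. Write $\mathcal{T}_N := \sigma(\xi_i : i > N)$ and $\mathcal{T}_\infty := \bigcap_{N\geq 1}\mathcal{T}_N$. Since the $\xi_i$ are i.i.d., any $\mathcal{T}_\infty$-measurable random variable is a.s. constant, so it suffices to show that $\dim_{\rho_\delta}(Gr_E(X))$ is $\mathcal{T}_N$-measurable for every $N$. The natural idea is that the finite ``head'' $\sum_{i=1}^N \lambda_i^{1/2}\xi_i\psi_i(t)$ of the expansion contributes a map that is, up to the control afforded by $\delta$ as a shared modulus of continuity, Lipschitz (with a random but a.s. finite constant), and therefore cannot change the Hausdorff dimension of the graph. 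More precisely, for fixed $N$ write $X = X^{\leq N} + X^{>N}$ where $X^{\leq N}(t) = \sum_{i=1}^N \lambda_i^{1/2}\xi_i\psi_i(t)$ and $X^{>N}(t) = \sum_{i>N}\lambda_i^{1/2}\xi_i\psi_i(t)$; the point is that $X^{>N}$ is $\mathcal{T}_N$-measurable, and $Gr_E(X)$ and $Gr_E(X^{>N})$ have the same $\rho_\delta$-dimension.

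The key step is this invariance. Since each $\psi_i$ satisfies $|\psi_i(t)-\psi_i(s)| \leq \lambda_i^{-1/2}\delta(s,t)$, the head satisfies $\|X^{\leq N}(t) - X^{\leq N}(s)\| \leq \big(\sum_{i=1}^N |\xi_i|\big)\,\delta(s,t) =: L_N\,\delta(s,t)$ with $L_N$ a.s. finite. Consider the map $\Psi_N : Gr_E(X^{>N}) \to Gr_E(X)$ given by $(t, X^{>N}(t)) \mapsto (t, X^{>N}(t) + X^{\leq N}(t))$, which is a bijection. For two points with time coordinates $s,t$, the $\rho_\delta$-distance on the target is $\max\{\delta(s,t), \|X(t)-X(s)\|\}$, while on the source it is $\max\{\delta(s,t), \|X^{>N}(t)-X^{>N}(s)\|\}$; using $\|X(t)-X(s)\| \leq \|X^{>N}(t)-X^{>N}(s)\| + L_N\delta(s,t)$ one gets $\rho_\delta(\Psi_N u, \Psi_N v) \leq (1+L_N)\,\rho_\delta(u,v)$, so $\Psi_N$ is Lipschitz; by symmetry ($X^{\leq N}$ again being $\delta$-Lipschitz) so is $\Psi_N^{-1}$. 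A bi-Lipschitz map preserves Hausdorff dimension (for any metric; see the diameter-based characterization \eqref{Haus dim product 1}), hence $\dim_{\rho_\delta}(Gr_E(X)) = \dim_{\rho_\delta}(Gr_E(X^{>N}))$ almost surely. The right-hand side is a function of $(\xi_i)_{i>N}$ alone — provided it is a genuine (measurable) random variable — and so is $\mathcal{T}_N$-measurable.

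Two technical points need care. First, one must ensure that $\dim_{\rho_\delta}(Gr_E(X))$, and likewise $\dim_{\rho_\delta}(Gr_E(X^{>N}))$, is an honest random variable, i.e. $\mathcal{F}$-measurable; this follows from the continuity of $X$ (and of each partial sum $X^{>N}$, which shares $\delta$ as modulus of continuity up to a constant), together with the fact that for a continuous process the graph restricted to a Borel set can be approached through countable operations, so that the Hausdorff content $\mathcal{H}^\beta_{\rho_\delta,\infty}(Gr_E(X))$ in \eqref{Haus content} is measurable in $\omega$ for each rational $\beta$, and $\dim_{\rho_\delta}$ is the infimum over rational $\beta$ of those for which the content vanishes; here condition \eqref{suffic cont cond} is exactly what guarantees (via the Dudley--Fernique criterion, since $\delta\lesssim\gamma$) that $X$ and all its tails are continuous, so the construction makes sense. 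Second, one must verify that the finite-dimensional partial sums converge appropriately so that $X^{>N}$ is well-defined as a continuous process and $\mathcal{T}_N$-measurable; this is standard for the Karhunen--Lo\`eve expansion of a continuous Gaussian process.

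I expect the main obstacle to be the measurability bookkeeping in the first technical point — making fully rigorous that $\dim_{\rho_\delta}(Gr_E(X))$ is a random variable and that the bi-Lipschitz equivalence $\Psi_N$ can be carried out simultaneously on a single event of probability one for all $N$ (so that the resulting constant does not depend on $N$). The geometric heart of the argument — that adding a $\delta$-Lipschitz perturbation to the space coordinate of a graph, measured in the $\rho_\delta$ metric, is a bi-Lipschitz operation and hence dimension-preserving — is the crucial conceptual step, and it is precisely here that working in the canonical metric $\delta$ rather than the Euclidean metric is what makes the perturbation Lipschitz; in the Euclidean metric the head $X^{\leq N}$ need not be Lipschitz at all when $\gamma$ is far from H\"older. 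Everything else reduces to the Kolmogorov zero-one law applied to $\mathcal{T}_\infty = \bigcap_N \mathcal{T}_N$.
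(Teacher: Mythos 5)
Your proposal is correct and follows essentially the same route as the paper: the same Karhunen--Lo\`eve head/tail decomposition, the same observation that the head $X^{\leq N}$ is $\delta$-Lipschitz with constant $\sum_{i\leq N}|\xi_i|$ (hence a dimension-preserving perturbation of the graph in the $\rho_\delta$ metric), and the same conclusion via tail-measurability and Kolmogorov's zero-one law. The only cosmetic difference is that you package the invariance step as a bi-Lipschitz bijection between $Gr_E(X^{>N})$ and $Gr_E(X)$, whereas the paper proves the same fact by an explicit covering argument (its Lemma \ref{effect of Lipshitz drift}); these are the same estimate.
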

\noindent The following deterministic lemma is a key to prove Theorem \ref{0-1 law for the graph}.
\begin{lemma}\label{effect of Lipshitz drift}
Let $f:\,[a,b]\rightarrow \mathbb{R}^d$ be a Borel measurable function and $g:\,[a,b]\rightarrow \mathbb{R}^d$ be a Lipschitz in the metric $\delta$, i.e. 
\begin{equation}\label{lipsh drift}
    \lVert g(t)-g(s) \rVert \leq \mathsf{C}_g\, \delta(t,s) \quad \text{ for all } s,t\in [a,b],
\end{equation}
for some positive constant $\mathsf{C}_g$. Then for all Borel set $E\subseteq [a,b]$ we have 
\begin{equation}\label{No effect of lipschitz drift}
\dim_{\rho_{\delta}}\left(Gr_E(f+g)\right)=
\dim_{\rho_{\delta}}\left(Gr_E(f)\right).
\end{equation}
\begin{proof}
    Let $\alpha:=\dim_{\rho_{\delta}}\left(Gr_E(f)\right).$  Then\, $\mathcal{H}_{\rho_{\delta}}^{\alpha+\varepsilon}\left(Gr_E(f)\right)=0$ for all $\varepsilon>0$. Therefore we fix $\varepsilon>0$ and $\eta>0$ to be arbitrary so that there exists a cover $\left(B_{\delta}(t_i,r_i)\times B(x_i,r_i)\right)_{j\geq 1}$ of $Gr_E(f)$ such that 
    \begin{align}
\sum_{j=1}^{\infty}r_i^{\alpha+\varepsilon}<\eta.
    \end{align}
From this last cover of $Gr_E(f)$ we will construct another cover of $Gr_E(f+g)$. Indeed, by using $\eqref{lipsh drift}$ if $t\in B_{\delta}(t_i, r_i)$ for some $i\geq 1$, then 
\begin{equation}\label{cover g}
    \lVert g(t)-g(t_i)\rVert \leq \mathsf{C}_g\, r_i.
\end{equation}
Now let $i\geq 1$ such that $t\in B_{\delta}(t_i,r_i)$ and $f(t)\in B(x_i,r_i)$, we then deduce from this and from \eqref{cover g} that $(f+g)(t)\in B\left(\widetilde{x}_i,\widetilde{r}_i\right)$ where $\widetilde{x}_i:=x_i+g(t_i)$ and $\widetilde{r}_i:=(1+\mathsf{C}_g)r_i$. Therefore the collection of balls $\left(B_{\delta}\left(t_i,\widetilde{r}_i\right)\times B(\widetilde{x}_i,\widetilde{r}_i)\right)$ is a cover of $Gr_E(f+g)$ and we have 
$$
\mathcal{H}^{\alpha+\varepsilon}_{\rho_{\delta},\infty}\left(Gr_E(f+g)\right)\leq (1+\mathsf{C}_g)^{\alpha+\varepsilon}\sum_{j=1}^{\infty}r_i^{\alpha+\varepsilon}\leq (1+\mathsf{C}_g)^{\alpha+\varepsilon}                                                     \, \eta.
$$
Since $\eta>0$ is arbitrary, this shows that $\mathcal{H}^{\alpha+\varepsilon}_{\rho_{\delta},\infty}\left(Gr_E(f+g)\right)=0$ for all $\varepsilon>0$. Hence \eqref{Haus dim product 1} ensures that
\begin{equation}\label{1st ineq dim}
\dim_{\rho_{\delta}}\left(Gr_E(f+g)\right)\leq \alpha=\dim_{\rho_{\delta}}\left(Gr_E(f)\right).
\end{equation}
The other inequality follows from \eqref{1st ineq dim} with $\widetilde{f}:=f+g$ and $\widetilde{g}:=-g$.
\end{proof}
\end{lemma}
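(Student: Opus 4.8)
The plan is to prove the two inequalities making up \eqref{No effect of lipschitz drift} separately, but in fact only one of them requires a genuine argument. If I can establish
\[
\dim_{\rho_{\delta}}\left(Gr_E(f+g)\right)\leq \dim_{\rho_{\delta}}\left(Gr_E(f)\right),
\]
then the reverse inequality follows automatically by applying this same bound with the substitution $\widetilde{f}:=f+g$ and $\widetilde{g}:=-g$: the map $-g$ is again Lipschitz in $\delta$ with the identical constant $\mathsf{C}_g$, and $\widetilde{f}+\widetilde{g}=f$, so the roles of $f$ and $f+g$ get exchanged. Thus I would concentrate the entire effort on the upper bound, and the symmetry observation disposes of the rest in one line.

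To prove the upper bound I would set $\alpha:=\dim_{\rho_{\delta}}\left(Gr_E(f)\right)$, fix $\varepsilon>0$, and exploit that by the content characterization in \eqref{Haus dim product 1} we have $\mathcal{H}^{\alpha+\varepsilon}_{\rho_{\delta},\infty}\left(Gr_E(f)\right)=0$. Hence for any $\eta>0$ there is a covering of $Gr_E(f)$ which, because a $\rho_{\delta}$-ball is exactly a product of a $\delta$-time-ball and a Euclidean space-ball by the max-structure of $\rho_{\delta}$, I may take in the form $\left(B_{\delta}(t_i,r_i)\times B(x_i,r_i)\right)_{i\geq1}$ with $\sum_{i\geq1}r_i^{\alpha+\varepsilon}<\eta$. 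The decisive structural point is that the hypothesis \eqref{lipsh drift} is stated in precisely the metric $\delta$ that governs the time coordinate of $\rho_{\delta}$: whenever $\delta(t,t_i)\leq r_i$, \eqref{lipsh drift} forces $\|g(t)-g(t_i)\|\leq \mathsf{C}_g\,r_i$, a displacement of the spatial marginal that is uniform over each time-ball and commensurate with its radius.

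From here I would transport the cover. If $(t,f(t))\in B_{\delta}(t_i,r_i)\times B(x_i,r_i)$, so that $\delta(t,t_i)\leq r_i$ and $\|f(t)-x_i\|\leq r_i$, then setting $\widetilde{x}_i:=x_i+g(t_i)$ and $\widetilde{r}_i:=(1+\mathsf{C}_g)r_i$ the triangle inequality gives $\|f(t)+g(t)-\widetilde{x}_i\|\leq \|f(t)-x_i\|+\|g(t)-g(t_i)\|\leq (1+\mathsf{C}_g)r_i=\widetilde{r}_i$, while the time coordinate is left untouched. Therefore $(t,(f+g)(t))\in B_{\delta}(t_i,\widetilde{r}_i)\times B(\widetilde{x}_i,\widetilde{r}_i)$, so the inflated family $\left(B_{\delta}(t_i,\widetilde{r}_i)\times B(\widetilde{x}_i,\widetilde{r}_i)\right)_{i\geq1}$ covers $Gr_E(f+g)$. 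Since every $\rho_{\delta}$-diameter is rescaled by the single factor $1+\mathsf{C}_g$, using \eqref{Haus content} I obtain
\[
\mathcal{H}^{\alpha+\varepsilon}_{\rho_{\delta},\infty}\left(Gr_E(f+g)\right)\leq (1+\mathsf{C}_g)^{\alpha+\varepsilon}\sum_{i\geq1}r_i^{\alpha+\varepsilon}<(1+\mathsf{C}_g)^{\alpha+\varepsilon}\,\eta.
\]
Letting $\eta\downarrow0$ forces $\mathcal{H}^{\alpha+\varepsilon}_{\rho_{\delta},\infty}\left(Gr_E(f+g)\right)=0$, whence $\dim_{\rho_{\delta}}\left(Gr_E(f+g)\right)\leq \alpha+\varepsilon$, and sending $\varepsilon\downarrow0$ delivers the upper bound.

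I do not expect a serious analytic obstacle; the one point that deserves care is the matching of metrics. The reason the radius inflation is a uniform constant $1+\mathsf{C}_g$, independent of $i$ and requiring nothing of $f$ beyond Borel measurability, is precisely that the perturbation is bounded in $\delta$, which is the same metric that measures the time slot of $\rho_{\delta}$; had $g$ been Lipschitz only in some metric incommensurate with $\delta$, the argument would break. Working with the Hausdorff content rather than the full Hausdorff measure (legitimate by the second equality in \eqref{Haus dim product 1}) is a convenient device that removes any need to track the mesh parameter in the defining infimum, since a single covering with small diameter-sum already witnesses that the content vanishes.
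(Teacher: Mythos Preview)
Your proof is correct and follows essentially the same approach as the paper's own argument: fix $\alpha=\dim_{\rho_\delta}(Gr_E(f))$, take an efficient $\rho_\delta$-cover of $Gr_E(f)$ at exponent $\alpha+\varepsilon$, translate and dilate each ball by $\widetilde{x}_i=x_i+g(t_i)$ and $\widetilde{r}_i=(1+\mathsf{C}_g)r_i$ using \eqref{lipsh drift}, and conclude via the Hausdorff content characterization \eqref{Haus dim product 1}, with the reverse inequality obtained by the substitution $\widetilde f=f+g$, $\widetilde g=-g$. Your brief remarks on why working directly with the content is legitimate and why the $\delta$-Lipschitz hypothesis is exactly what is needed are accurate and match the paper's reasoning.
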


\begin{proof}[Proof of Theorem \ref{0-1 law for the graph}]
First let us note that \eqref{suffic cont cond} implies that $X$ has a continuous version, then by using \cite[Theorem  3.8]{Adler 2} the series in \eqref{karh-loe decomp} converge uniformly on $[0,1]$ a.s.2, thus it is a concrete version of $X$.  Considering this version, we define for all $n\geq 1$ the finite and infinite parts of $X$, denoted by $X_{1,n}$ and $X_{n,\infty}$ as follows
$$
X_{1,n}(t):=\sum_{i=1}^n \lambda_i^{1/2}\,\xi_i\,\psi_i(t) \quad \text{ and } \quad X_{n,\infty}(t):= X(t)-X_{1,n}(t) \quad \text{for all } t\in [0,1].
$$
Then we have 
\begin{equation}\label{lipschitz part}
    \left\lVert X_{1,n}(t)-X_{1,n}(s) \right\rVert \leq \left(\sum_{i=1}^n \lvert \xi_i\rvert \right)\, \sup_{1 \leq i\leq n} \lambda_i^{1/2} \lvert  \psi_i(t) -\psi_i(s)\rvert \leq \left(\sum_{i=1}^n \lvert \xi_i\rvert \right)\,\delta(t,s),
\end{equation}
for all $s,t\in [0,1]$, almost surely, where we used \eqref{delta} in the last inequality.
We fix $E\subset [0,1]$ to be a Borel set. By making use of \eqref{lipschitz part}, Lemma \ref{effect of Lipshitz drift} applies for almost every $\omega$; specifically, for fixed $n$, this is the set of $\omega$'s such that $\sum_{i=1}^n \lvert \xi_i\rvert $ is finite. Lemma \ref{effect of Lipshitz drift} thus ensures that, by countable intersection, almost surely,
$$
\dim_{\rho_{\delta}}Gr_E(X)=\dim_{\rho_{\delta}}Gr_E(X_{n,\infty})\quad \quad \text{ for all $n\geq 1$}.
$$
This shows that the random variable $\dim_{\rho_{\delta}}\left(Gr_E(X)\right)$ is measurable with respect to the tail $\sigma$-algebra $\bigcap_{n=1}^{\infty}\sigma\left(\{\xi_i, i\geq n+1\}\right)$. Hence the Kolmogorov's 0-1 law ensures that this random variable is constant almost surely.
\end{proof}

\begin{remark}\label{finite number}
    The proof of Theorem \ref{0-1 law for the graph} relies on the fact that the dimension of the graph of the process $X$ is in the tail sigma-algebra of a sequence of i.i.d random variables. But the Karhunen-Lo\`eve expansion of $X$ may have only finitely many non-zero terms, making that tail sigma-algebra property arguably artificial. Still, the proof's argument carries through, though the result of the theorem can be obtained more directly. Indeed, if $\lambda_i=0$ for all $i$ greater than some fixed $n_0$, and assuming that the eigenfunctions $\psi_i$ are differentiable for all $i\leq n_0$ and at least one of them satisfies the fact that: $\lvert \psi_i(t)-\psi_i(s)\rvert \geq \mathsf{c}_i\, \lvert t-s\rvert$ for $s,t\in J$ for some $i\leq n_0$ and some  interval $J\subset [0,1]$. Then the canonical metric of the process is commensurate with the Euclidean metric on $J$, and Corollary \ref{joli corollaire} proves that, for all $E\subset J$, the Hausdorff dimension of $Gr_E(X)$ equals the usual Hausdorff dimension of $E$. More generally, still assuming that all $\lambda_i=0$ for all $i$ greater than some fixed $n_0$, but without assuming that the eigenfunctions $\psi_i$ are differentiable, by applying Lemma \ref{effect of Lipshitz drift} with $f=X_{n_0,\infty} \equiv 0$ and $g=X_{1,n_0}=X$, we get $\dim_{\rho_\delta}(Gr_E(X))=\dim_{\rho_{\delta}}(Gr_E(0))=\dim_{\rho_\delta}(E\times {0})=\dim_{\delta}(E)$.
\end{remark}

Due to the complex structure of the image compared to the graph, the previous methodology, which is based on a covering argument and Hausdorff measures techniques to show that $\dim_{\rho_{\delta}} Gr_E(X)$ is measurable with respect to a tail sigma-field, is difficult to be applied to the image case, which pushes us to seek other methods. To prove a similar result for $\dim_{\operatorname{euc}} X(E)$ we will proceed differently, trying to use the Karhunen-Loève expansion again combined with a potential theoretical approach to be able to prove that $\dim_{\operatorname{euc}} X(E)$ is measurable with respect to the tail sigma-field associated with the sequence of Gaussian random variables appearing in the Karhunen-Lo\`eve expansion. 
\begin{theorem}\label{0-1 law for image}
Under the same conditions of Theorem \ref{0-1 law for the graph} we have for all Borel set $E\subset [0,1]$ there exists a non-random constant $\mathbf{c}(E)\in [0,d]$ such that 
\begin{equation}\label{No effect of lipschitz drift}
\dim_{\operatorname{euc}}\left(X(E)\right)=\mathbf{c}(E)
\quad a.s.
\end{equation}
\end{theorem}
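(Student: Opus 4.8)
The plan is to adapt the strategy of Theorem~\ref{0-1 law for the graph}: to represent $\dim_{\operatorname{euc}}(X(E))$ as a functional measurable with respect to the tail $\sigma$-field $\bigcap_{n\ge 1}\sigma(\{\xi_i:i>n\})$ of the Karhunen--Lo\`eve sequence, and then apply Kolmogorov's $0$--$1$ law. Since the covering argument of Lemma~\ref{effect of Lipshitz drift} has no direct counterpart for images, the first step is a capacitary reformulation. Condition \eqref{suffic cont cond} makes $X$ continuous with the series \eqref{karh-loe decomp} converging uniformly a.s., so $X(E)$ is analytic; by Frostman's theorem for analytic sets $\dim_{\operatorname{euc}}(X(E))=\sup\{\zeta>0:\mathcal C^{\zeta}_{\operatorname{euc}}(X(E))>0\}$, and by a measurable (Jankov--von Neumann) selection of a right inverse of $X$ on $X(E)$, every finite-$\zeta$-energy measure on $X(E)$ is a pushforward $\nu\circ X^{-1}$ with $\nu\in\mathcal P(E)$. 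Hence, writing
\[
A_\zeta:=\Big\{\,\exists\,\nu\in\mathcal P(E):\ \int_{E}\int_{E}\frac{\nu(dt)\,\nu(ds)}{\|X(t)-X(s)\|^{\zeta}}<\infty\,\Big\},
\]
one has $\dim_{\operatorname{euc}}(X(E))=\sup\{\zeta\in\mathbb Q\cap(0,d):A_\zeta\text{ occurs}\}$ almost surely (the map $\zeta\mapsto\mathbf 1_{A_\zeta}$ being non-increasing). It therefore suffices to prove $\mathbb P(A_\zeta)\in\{0,1\}$ for each rational $\zeta\in(0,d)$; the announced constant is then $\mathbf c(E):=\sup(\{0\}\cup\{\zeta\in\mathbb Q\cap(0,d):\mathbb P(A_\zeta)=1\})\in[0,d]$.

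Fix such a $\zeta$ and, for $n\ge 1$, split $X=X_{1,n}+X_{n,\infty}$ as in the proof of Theorem~\ref{0-1 law for the graph}, with $\mathcal F_n:=\sigma(\xi_1,\dots,\xi_n)$ and $\mathcal G_n:=\sigma(\{\xi_i:i>n\})$. Conditionally on $\mathcal G_n$, the increment $X(t)-X(s)$ has the law of $N\!\big(X_{n,\infty}(t)-X_{n,\infty}(s),\,\tau_n(t,s)^2 I_d\big)$ with $\tau_n(t,s)^2=\sum_{i\le n}\lambda_i(\psi_i(t)-\psi_i(s))^2\le\delta(t,s)^2$ by \eqref{delta}. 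Using the elementary bound $\mathbb E\big[\|N(m,\sigma^2 I_d)\|^{-\zeta}\big]\asymp(\|m\|\vee\sigma)^{-\zeta}$, valid for $0<\zeta<d$ uniformly in $(m,\sigma)$, one gets for every $\nu\in\mathcal P(E)$
\[
\mathbb E\Big[\int_E\int_E\|X(t)-X(s)\|^{-\zeta}\,\nu(dt)\nu(ds)\ \Big|\ \mathcal G_n\Big]\ \asymp\ \int_E\int_E\big(\|X_{n,\infty}(t)-X_{n,\infty}(s)\|\vee\tau_n(t,s)\big)^{-\zeta}\,\nu(dt)\nu(ds).
\]
The right-hand side is at most $\int_E\int_E\|X_{n,\infty}(t)-X_{n,\infty}(s)\|^{-\zeta}\nu(dt)\nu(ds)$; choosing, via a $\mathcal G_n$-measurable selection, a near-optimal $\nu$ for $X_{n,\infty}$ on the $\mathcal G_n$-event where its pushforward has positive $\zeta$-capacity, this shows that $A_\zeta$ holds almost surely on that event. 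In particular $\dim_{\operatorname{euc}}(X(E))\ge\dim_{\operatorname{euc}}(X_{n,\infty}(E))$ a.s., for every $n$.

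The converse inclusion --- that deleting the finite, $\delta$-Lipschitz ``head'' $X_{1,n}$ cannot \emph{raise} the dimension of the image, i.e. $\dim_{\operatorname{euc}}(X(E))\le\max\!\big(\dim_{\operatorname{euc}}(X_{n,\infty}(E)),\,\dim_\delta(E)\wedge d\big)$ a.s. --- is the crux, and the step I expect to be the main obstacle. For the graph it was immediate from Lemma~\ref{effect of Lipshitz drift}, because the product metric $\rho_\delta$ already dominates any $\delta$-Lipschitz perturbation of the spatial variable; the Euclidean metric on the target does not, so a $\delta$-Lipschitz perturbation of size comparable to $\delta$ is not automatically negligible here. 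The plan is to condition on $\mathcal G_n$, write $X(\cdot)=X_{n,\infty}(\cdot)+M\,\Psi_n(\cdot)$ with $\Psi_n=(\psi_1,\dots,\psi_n)$ and $X_{n,\infty}$ frozen and $M$ a fresh Gaussian $d\times n$ matrix, and, for $\zeta>\dim_{\operatorname{euc}}(X_{n,\infty}(E))$, build an efficient cover of $X(E)$ from an efficient cover of $X_{n,\infty}(E)$ pulled back to a $\delta$-ball cover $\{B_\delta(t_i,r_i)\}$ of $E$: on each ball $M\Psi_n$ moves points by at most $\|M\|\,r_i$, and a Marstrand/Frostman-type genericity statement (the set of linear maps that inflate the Hausdorff dimension of the fixed analytic set $\Psi_n(E)\subset\mathbb R^n$ is Lebesgue-null) bounds the contribution of the $n$ head directions by $\dim_\delta(E)\wedge d$. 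Combining this with the lower bound $\dim_{\operatorname{euc}}(X(E))\ge\dim_\delta(E)\wedge d$ a.s.\ from Theorem~\ref{Hausd dim image}(i), one obtains that $\mathbf 1_{A_\zeta}$ equals $1$ a.s.\ when $\zeta<\dim_\delta(E)\wedge d$ and equals the indicator of the analogous event for $X_{n,\infty}$ a.s.\ otherwise; in either case it coincides $\mathbb P$-a.s.\ with a $\mathcal G_n$-measurable random variable, for every $n$, hence is measurable for the completed tail $\sigma$-field, and Kolmogorov's $0$--$1$ law gives $\mathbb P(A_\zeta)\in\{0,1\}$. The degenerate case in which only finitely many eigenvalues $\lambda_i$ are non-zero (so the tail $\sigma$-field is trivial and the scheme above is vacuous) must be treated directly, in the spirit of Remark~\ref{finite number}: there $X(E)=M\,\Psi(E)$ is a random linear image of a fixed analytic set by a Gaussian matrix, and Marstrand's projection theorem for absolutely continuous families yields $\dim_{\operatorname{euc}}(X(E))=\min\!\big(d,\dim_{\operatorname{euc}}(\Psi(E))\big)$ a.s., again a non-random constant.
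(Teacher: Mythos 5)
Your overall strategy (Karhunen--Lo\`eve split, tail-measurability, Kolmogorov's $0$--$1$ law) is the paper's, and your conditional Gaussian estimate
$\mathbb{E}\bigl[\|X(t)-X(s)\|^{-\zeta}\mid\mathcal G_n\bigr]\asymp\bigl(\|X_{n,\infty}(t)-X_{n,\infty}(s)\|\vee\tau_n(t,s)\bigr)^{-\zeta}$ is exactly the computation the paper performs. But you then discard the $\tau_n=\delta_{1,n}$ component and try to establish tail-measurability by comparing $\dim_{\operatorname{euc}}X(E)$ with $\dim_{\operatorname{euc}}X_{n,\infty}(E)$. This forces you into the ``converse inclusion'' $\dim_{\operatorname{euc}}X(E)\le\max\bigl(\dim_{\operatorname{euc}}X_{n,\infty}(E),\dim_\delta(E)\wedge d\bigr)$, which you rightly flag as the main obstacle, and your sketch for it does not close the gap. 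The danger is not that a linear map $M$ inflates $\dim\Psi_n(E)$ --- Lipschitz maps never increase Hausdorff dimension, so no Marstrand-type genericity is needed or relevant there --- but that adding the head $M\Psi_n$ to the frozen tail $X_{n,\infty}$ inflates the dimension of the image of the \emph{sum}. This is the same phenomenon that makes a graph strictly larger than an image (compare $t\mapsto W(t)$ with $t\mapsto(W(t),\xi t)$), and it is a statement about the ``max of two kernels'' capacitary dimension, which in general strictly exceeds the maximum of the two individual dimensions. Your covering construction also does not work as described: an efficient cover of $X_{n,\infty}(E)$ pulls back to sets $X_{n,\infty}^{-1}(B)\cap E$ which need not have small $\delta$-diameter, so you cannot control the oscillation of $M\Psi_n$ on each piece. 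Note finally that if your claimed inequality were provable by such soft means, it would essentially yield $\mathbf c(E)=\dim_\delta(E)\wedge d$ without Condition $\mathbf{(C_{0+})}$, i.e. it would resolve precisely the question the paper leaves open for logBm.

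The paper avoids this entirely by never comparing with $\dim_{\operatorname{euc}}X_{n,\infty}(E)$. It keeps the full mixed kernel $K_n(s,t)=\bigl(\delta_{1,n}(s,t)\vee\|X_{n,\infty}(t)-X_{n,\infty}(s)\|\bigr)^{-1}$ and sets $\zeta_n(E):=\sup\{\zeta:\inf_{\nu\in\mathcal P(E)}\iint K_n^{\zeta}\,d\nu\,d\nu<\infty\}$. This quantity is \emph{already} $\sigma(\{\xi_i:i>n\})$-measurable, since $\delta_{1,n}$ is deterministic and $X_{n,\infty}$ is tail-measurable. The identity $\dim_{\operatorname{euc}}X(E)=\zeta_n(E)\wedge d$ a.s.\ then follows for each $n$ from two one-sided bounds: the upper bound is a pathwise triangle inequality, $\|X(t)-X(s)\|\le(\max_{i\le n}\|\xi_i\|+1)\,K_n(s,t)^{-1}$, combined with the pushforward lemma for measures on $X(E)$; the lower bound is your conditional estimate together with Fubini over the law of the head. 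You should replace your event $A_\zeta$ by the $\mathcal G_n$-measurable quantity $\zeta_n(E)$ and drop the comparison with $X_{n,\infty}(E)$ altogether; as written, the argument has a genuine gap at its central step.
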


\begin{remark}
    Just as in Theorem \ref{0-1 law for the graph}, the proof of Theorem \ref{0-1 law for image} also seems to use the Kolmogorov 0-1 law artificially when there is only a finite number of nonzero Karhunen-Lo\`eve eigenvalues $\lambda_i$. Yet the same arguments as in Remark \ref{finite number} lead to a direct proof that the dimension of the image is non-random, and in fact, $\dim_{\operatorname{\operatorname{euc}}}(X(E))=\dim_{\delta}(E) \wedge d$.
\end{remark}
\begin{remark}
    We believe that the situation in the previous remark can never occur if condition $(\mathbf{C}_{0+})$ does not hold. We know of two classes of examples where no such situation can be constructed because all processes that violate  condition $(\mathbf{C}_{0+})$ in those classes have infinitely many non-zero Karhunen-Lo\`eve eigenvalues. Recall the Volterra processes in \eqref{volt repr}. Then we can prove that every eigenfunction $\psi_i$ of such a process is $\alpha$-H\"older-continuous on $[0,1]$ for any $0<\alpha<1$. The details are left to the reader. For such a process, if its Karhunen-Lo\`eve expansion had only finitely many non-zero terms, then the process would also be $\alpha$-H\"older-continuous, almost surely, which would imply, using the lower-bound side of the commensurability condition $\mathbf{(\Gamma)}$ in \eqref{commensurate}, that its standard deviation function $\gamma$ has a positive lower index, and thus that condition $(\mathbf{C}_{0+})$ holds because of Lemma \ref{lem check nice cond}; again details are omitted.  We also leave it to the reader to check that, in the case of  processes with stationary increments, the same argument via H\"older-continuity holds. Thus, for both Volterra processes and processes with stationary increments satisfying condition $\mathbf{(\Gamma)}$, we can prove by contrapositive that if condition $(\mathbf{C}_{0+})$ is violated, then the Karhunen-Lo\`eve expansion had infinitely many non-zero terms.         
    
\end{remark}

In order to prove Theorem \ref{0-1 law for image} we need some preliminaries. First we start by a classical result, whose proof is an application of Hahn-Banach theorem, see for example Theorem 1.20 p. 17 in \cite{Mattila}.     
\begin{lemma}\label{lem image measure}
Let $(E,\rho)$ be a compact metric space and $f\,:\,E\, \rightarrow \mathbb{R}^d$ be a continuous function. Then for any probability measure $\mu$ on $f(E)$ there exists a probability measure $\nu$ on $E$  such that $\mu=\nu\circ f^{-1}$.
\end{lemma}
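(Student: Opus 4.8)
The plan is to follow the functional-analytic route signalled by the reference to Hahn--Banach, realizing $\nu$ through the Riesz representation theorem applied to a suitable positive linear functional on $C(E)$. First I would set up the pullback. Since $E$ is compact and $f$ is continuous, $f(E)$ is a compact metric subspace of $\mathbb{R}^d$, so $C(E)$ and $C(f(E))$ are Banach spaces under the sup norm. The composition map $f^{*}\colon C(f(E))\to C(E)$, $f^{*}(\phi)=\phi\circ f$, is linear, and because $f$ maps $E$ \emph{onto} $f(E)$ it is a linear isometry:
\[
\|\phi\circ f\|_{\infty}=\sup_{x\in E}|\phi(f(x))|=\sup_{y\in f(E)}|\phi(y)|=\|\phi\|_{\infty}.
\]
In particular $f^{*}$ is injective, and its range $V:=f^{*}(C(f(E)))$ is a closed subspace of $C(E)$ containing the constant function $\mathbf 1$.

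Next I would transport $\mu$ to a functional on $V$ and extend it. The integration functional $L_{\mu}(\phi):=\int_{f(E)}\phi\,d\mu$ is a norm-one positive functional on $C(f(E))$ with $L_{\mu}(\mathbf 1)=1$. Because $f^{*}$ is an isometric isomorphism onto $V$, the map $\Lambda:=L_{\mu}\circ (f^{*})^{-1}$ is a well-defined linear functional on $V$ with $\|\Lambda\|_{V^{*}}=1$ and $\Lambda(\mathbf 1)=1$. The Hahn--Banach theorem then yields an extension $\widetilde\Lambda$ to all of $C(E)$ with $\|\widetilde\Lambda\|=1$ and $\widetilde\Lambda(\mathbf 1)=1$.

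The crucial point, and the step I expect to require the most care, is that norm preservation alone forces positivity of $\widetilde\Lambda$ (Hahn--Banach by itself does not guarantee it). For $0\le g\le 1$ one has $\|\mathbf 1-g\|_{\infty}\le 1$, hence $\widetilde\Lambda(\mathbf 1-g)\le 1$ and therefore $\widetilde\Lambda(g)\ge\widetilde\Lambda(\mathbf 1)-1=0$; rescaling gives $\widetilde\Lambda(g)\ge 0$ for every nonnegative $g\in C(E)$. (Equivalently, one may invoke the M. Riesz positive extension theorem directly.) Thus $\widetilde\Lambda$ is a positive linear functional with $\widetilde\Lambda(\mathbf 1)=1$, and the Riesz representation theorem on the compact metric space $E$ produces a Borel probability measure $\nu$ with $\widetilde\Lambda(h)=\int_{E}h\,d\nu$ for all $h\in C(E)$.

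Finally I would verify the pushforward identity. For every $\phi\in C(f(E))$,
\[
\int_{E}\phi\circ f\,d\nu=\widetilde\Lambda(\phi\circ f)=\Lambda(\phi\circ f)=L_{\mu}(\phi)=\int_{f(E)}\phi\,d\mu,
\]
so $\int\phi\,d(\nu\circ f^{-1})=\int\phi\,d\mu$ for all continuous $\phi$ on the compact metric space $f(E)$. Since finite Borel measures on a compact metric space are determined by their integrals against continuous functions, this yields $\nu\circ f^{-1}=\mu$, which completes the argument.
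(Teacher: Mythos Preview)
Your proof is correct and follows precisely the Hahn--Banach/Riesz-representation route that the paper points to (it cites Mattila, Theorem~1.20, rather than writing out a proof). The positivity step you flagged as delicate is handled correctly: the norm-preserving extension with $\widetilde\Lambda(\mathbf 1)=1$ forces $\widetilde\Lambda\ge 0$, and the rest is routine.
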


Recall that the Karhunen-Loève expansion provides a concrete continuous version of the Gaussian process $X$, that all its eigenfunctions are continuous, and that using the notation $X_{1,n}$ and $X_{n,\infty}$ defined in the proof of Theorem \ref{0-1 law for the graph}, the function $X_{1,n}$, as a finite (random) linear combination of eigenfunctions, is continuous, and therefore, $X_{n,\infty}$ is continuous as a difference of two continuous processes. All these statements are to be understood almost surely. Let us denote by $\mathbb{Q}_{1,n}$ and  $\mathbb{Q}_{n,\infty}$ their distributions on the space of continuous functions, and by  $\delta_{1,n}$ and $\delta_{n,\infty}$ their associated canonical metrics respectively. The expression \eqref{delta} then immediately implies 
\begin{equation}
    \delta_{1,n}^2(s,t)=\sum_{i=1}^{n}\lambda_i\left(\psi_i(t)-\psi_i(s)\right)^2\quad \text{ and }\quad \delta_{n,\infty}^2(s,t)=\sum_{i=n+1}^{\infty}\lambda_i\left(\psi_i(t)-\psi_i(s)\right)^2,
\end{equation}
and these two processes are independent by construction. Therefore we have the equality in distribution $\left(X,\mathbb{P} \right)   \,\overset{d}{=}\,\left(X_{1,n}+X_{n,\infty},\mathbb{Q}_{1,n}\otimes \mathbb{Q}_{n,\infty} \right)$. For convenience, we denote by $\Omega_{1,n}$ and  $\Omega_{n,\infty}$ two copies of the space of continuous functions; the measures $\mathbb{Q}_{1,n}$ and $ \mathbb{Q}_{n,\infty}$ are defined on these two spaces. We may also choose to define the law of $X$ on the set of continuous functions $\Omega=\Omega_{1,n}\times \Omega_{n,\infty}$, and for $\omega \in \Omega$ the paths $X_{1,n}(\omega)$ and $X_{n,\infty}(\omega)$ can be understood using the obvious projection. 
\begin{proof}[Proof of Theorem \ref{0-1 law for image}]For all $n\geq 1$ and all Borel set $E\subset [0,1]$ we denote by $K_{n}(\cdot)$ the following random kernel
\begin{align}
    K_{n}(s,t,\omega):=\left(\delta_{1,n}\left(s,t\right)\vee \lVert X_{n,\infty}(s,\omega)-X_{n,\infty}(t,\omega)\rVert\right)^{-1}\quad \text{ for all $s,t \in [0,1]$ and $\omega \in \Omega$}.
\end{align}
Let $\nu$ be a probability measure on $E$. Denote by $\zeta_{n}\left(E,\cdot\right)$ the random variable defined as follow 
\begin{align}\label{random dimension}
    \zeta_n\left(E\right):= \sup\left\{ \zeta>0\,:\,\inf_{\nu\in \mathcal{P}(E)}\int_E\int_E \left[K_{n}(s,t,\cdot)\right]^{\zeta}\nu(ds)\nu(dt)<\infty\right\}.
\end{align}
We will show that for any fixed integer $n\geq 1$ and for all Borel set $E\subset [0,1]$ we have 
\begin{align}\label{value dim image}
\dim_{\operatorname{euc}} X(E)=\zeta_n(E)\wedge d\quad \text{\,  almost surely.} 
\end{align} 
Since the integers are countable, \eqref{value dim image} holds almost surely for all $n\geq 1$ simultaneously. In particular, almost surely, $\zeta_n(E)\wedge d$ does not depend on $n$.

 Indeed, let $n \geq 1$ be fixed and $E\subseteq [0,1]$ be a Borel set, we will first prove that $\dim_{\operatorname{euc}} X(E)\leq \zeta_n(E)\wedge d$\, a.s. 
 Let $\omega \in \Omega_n:=\left\{\, \max_{i\leq n}\lVert \xi_i\rVert<\infty\,\right\}$, and assume that $\zeta_n(E)(\omega)<d$ otherwise there is nothing to prove. Then \eqref{random dimension} implies that for all $\zeta>\zeta_n(E)(\omega)$ we have
 \begin{align}\label{infinite energy}
     \int_E\int_E \left[K_{n}(s,t,\omega)\right]^{\zeta}\nu(ds)\nu(dt)=\infty \quad \text{for all $\nu\in \mathcal{P}(E)$}.
 \end{align}
 
\noindent On the other hand, we note that for all $s,t\in [0,1]$ we have  
\begin{align}\label{trg ineq}
     \lVert X(t,\omega)-X(s,\omega)\rVert & \leq \lVert X_{1,n}(t,\omega)-X_{1,n}(s,\omega)\rVert + \lVert X_{n,\infty}(t,\omega)-X_{n,\infty}(s,\omega)\rVert \nonumber \\
     &\leq \left(\max_{i\leq n}\left\lVert \xi_i(\omega)\right\rVert\ \right)\delta_{1,n}\left(t,s\right) + \left\lVert X_{n,\infty}(t,\omega)-X_{n,\infty}(s,\omega)\right\rVert \nonumber \\ 
     & \leq \left(\max_{i\leq n}\lVert \xi_i(\omega)\rVert+1\right)\,\left[\delta_{1,n}\left(t,s\right)\vee \left\lVert X_{n,\infty}(t,\omega)-X_{n,\infty}(s,\omega)\right\rVert \right]\\ \nonumber
    & = \left(\max_{i\leq n}\lVert \xi_i(\omega)\rVert+1\right)\,\left[K_n(s,t,\omega)\right]^{-1}.\nonumber 
\end{align}
Thus by \eqref{infinite energy} and \eqref{trg ineq} we infer that
\begin{align}\label{all energies explose}
    \int_E\int_E \frac{\nu(ds)\nu(dt)}{\lVert X(t,\omega)-X(s,\omega)\rVert^{\zeta}}=\infty \quad \text{ for all $\nu\in \mathcal{P}(E)$}.
\end{align}
Using Lemma \ref{lem image measure}, any probability measure $\mu$ on $X(E,\omega)$ may be written as $\mu=\nu\circ X^{-1}(\cdot,\omega)$ for some $\nu \in \mathcal{P}(E)$, so using this fact as well as \eqref{all energies explose} we obtain $\mathcal{C}^{\zeta}_{\operatorname{euc}}\left(X(E,\omega)\right)=0$ and then by \eqref{altern dim delta} we have $\dim_{\operatorname{euc}} X(E,\omega)\leq \zeta$. Letting $\zeta \downarrow \zeta_n(E)(\omega)$ we get $\dim_{\operatorname{euc}} X(E,\omega)\leq \zeta_n(E)(\omega)$. Since $\mathbb{P}(\Omega_n)=1$, the desired upper bound hold almost surely for fixed $n$, and then as we mentioned, for all $n$ simultaneously.

We will now show that $\dim X(E)\geq \zeta_n(E)\wedge d$\, a.s. First, we remark that the random variable $\zeta_n(E)$ is measurable with respect to $\sigma(\left\{ \xi_i\,:\, i\geq n+1\,\right\})$ and therefore it is independent from $X_{1,n}$. Let $n\in \mathbb{N}$ and $\omega_2\in \Omega_{n,\infty}$ be fixed, we assume that $\zeta_n(E)(\omega_2)>0$ otherwise there is nothing to prove. Let $ 0<\zeta<\zeta_n(E)(\omega_2)\wedge d$ be arbitrary, then there exists a probability measure $\nu_{\omega_2}\in \mathcal{P}(E)$ such that 
\begin{align}\label{finite graph energy}
    \int_E\int_E \left[K_{n}(s,t,\omega_2)\right]^{\zeta}\nu_{\omega_2}(ds)\nu_{\omega_2}(dt)<\infty.
\end{align}
Now for any $\omega_1\in \Omega_{1,n}$ we consider the random probability measure $\mu_{\omega_1,\omega_2}$ defined on $X(E)$ via 
$$
\mu_{\omega_1,\omega_2}(F):=\nu_{\omega_2}\left(\left\{s\in E\,:\, X\left(t,(\omega_1,\omega_2)\right)\in\, F\right\}\right) \quad \text{ for all $F\subset X(E)$}.
$$
Our aim is to show that 
\begin{align}\label{finite random energy}
 \mathcal{E}_{euc,\zeta}\left(\mu_{\omega_1,\omega_2}\right)<\infty   \quad \text{ for \,$\,\mathbb{Q}_{1,n}$-almost all $\omega_1\in\Omega_{1,n}$.}
\end{align}
In fact, for $\omega_2\in \Omega_{n,\infty}$ being fixed, taking expectation with respect to $\mathbb{Q}_{1,n}(d\omega_1)$ and using a transfer theorem and Fubini's theorem we obtain that 
\begin{align}\label{expect energy image}
\begin{aligned}
    \mathbb{E}_{\mathbb{Q}_{1,n}}\left(\mathcal{E}_{euc,\zeta}\left(\mu_{\cdot,\omega_2}\right)\right)=\dint_E\dint_E \underset{:=I_{n,\zeta}(s,t,\,\omega_2)}{\underbrace{\mathbb{E}_{\mathbb{Q}_{1,n}}\left(\frac{1}{\lVert X_{1,n}(t)-X_{1,n}(s)+X_{n,\infty}(t,\omega_2)-X_{n,\infty}(s,\omega_{2}) \rVert^{\zeta} }\right)}}\nu_{\omega_2}(ds)\nu_{\omega_2}(dt).
\end{aligned}
\end{align}
In order to prove \eqref{finite random energy} we only need to show that
\begin{align}\label{compar random kernels}
    I_{n,\zeta}(t,s,\omega_2)\leq \mathsf{c}_0 \left[K_n(s,t,\omega_2)\right]^{\zeta} \quad \text{for all $s,t\in E$},
\end{align}
where $\mathsf{c}_0$ is a positive constant. Let $s,t\in E$, if $K_n(s,t,\omega_2)=\infty$ the above inequality is obvious. So we assume that $K_n(s,t,\omega_2)<\infty$. Then for simplicity we let 
$$ 
\mathsf{u}:=\delta_{1,n}(s,t) \quad \text{and}\quad \mathsf{v}(\omega_2):=X_{n,\infty}(t,\omega_2)-X_{n,\infty}(s,\omega_2).
$$
Then using the Gaussian scaling property and the independence between $X_{1,n}$ and $X_{n,\infty}$ we have 
\begin{align}\label{scal-indep}
    I_{n,\zeta}(s,t,\omega_2)=\mathbb{E}_{\mathbb{Q}_{1,n}}\left(\frac{1}{\lVert \mathsf{u}\,Z+\mathsf{v}(\omega_2)\rVert^{\zeta}}\right)=\dint_{\mathbb{R}^d}\frac{1}{\lVert \mathsf{u}\,x+\mathsf{v}(\omega_2)\rVert^{\zeta}}\frac{e^{-\frac{\lVert x \rVert^2 }{2}}}{(2\pi)^{d/2} } dx,
\end{align}
where $Z$ is a standard Gaussian vector $N(0,I_d)$.  There are four possible cases: (i) $\mathsf{u}=0<\lVert \mathsf{v}(\omega_2)\rVert$, (ii) $\lVert \mathsf{v}(\omega_2)\rVert=0<\mathsf{u}$, (iii) $0< \lVert \mathsf{v}(\omega_2)\rVert\leq \mathsf{u}$ and (iv) $0<\mathsf{u}\leq \lVert \mathsf{v}(\omega_2)\rVert $. Since $\zeta<d$, the inequality \eqref{compar random kernels} is trivial in the first two cases, let us then prove it only in the cases (iii) and (iv). First, for $\mathsf{w}:=\mathsf{v}(\omega_2)/\mathsf{u}$\, let $J(\mathsf{w})$ be defined as
\begin{align}\label{int appear in krnl}    J\left(\mathsf{w}\right):=\dint_{\mathbb{R}^d}\frac{1}{\lVert \,x+\mathsf{w}(\omega_2)\rVert^{\zeta}}\frac{e^{-\frac{\lVert x \rVert^2 }{2}}}{(2\pi)^{d/2} } dx.
\end{align}
One can remark that $I_n(s,t)=\,u^{-\zeta}\,J(\mathsf{w})$. When $0< \lVert \mathsf{v}(\omega_2)\rVert\leq \mathsf{u}$, using the fact that the functions $x\mapsto e^{-\lVert x\rVert^2/2}$ and $\,x\mapsto \lVert x \rVert^{-\zeta}$ have the same monotony as functions of $\lVert x\rVert$, then for all $\mathsf{w}\in \R^d$ we have 

\begin{equation}\label{same monotony}
    \dint_{\mathbb{R}^d}(e^{-\lVert x+\mathsf{w}\rVert^2/2}-e^{-\lVert x\rVert^2/2})(\lVert x+\mathsf{w} \rVert^{-\zeta}-\lVert x \rVert^{-\zeta})dx\geq 0.
\end{equation}
Hence using a change of variables we obtain 
\begin{align}\label{estim krnl 1st case}
\begin{aligned}
J\left(\mathsf{w}\right)\leq 2\dint_{\mathbb{R}^d}\frac{1}{\lVert \,x\rVert^{\zeta}}\frac{e^{-\frac{\lVert x \rVert^2 }{2}}}{(2\pi)^{d/2} } dx=:\mathsf{c}_{1,\zeta},
\end{aligned}
\end{align}
where $\mathsf{c}_1=\mathsf{c}_{1,\zeta}=2(2\pi)^{-d/2}\,\int_{\mathbb{R}}r^{d-\zeta-1}\,e^{-r^{2}/2}dr<\infty$ since $\zeta<d$. Then multiplying $J(\mathsf{w})$ by $\mathsf{u}^{-\zeta}$ and using the upper bound \eqref{estim krnl 1st case}, we get 
\begin{align}\label{estim krnl 1st case}
   I_{n,\zeta}(s,t,\omega_2)\leq \mathsf{c}_1\, \mathsf{u}^{-\zeta}=\mathsf{c}_1\, [K(s,t,\omega_2)]^{\zeta}.
\end{align}
This gives the desired inequality in the case (iii). On the other hand, when $0<\mathsf{u} `< \lVert \mathsf{v}(\omega_2)\rVert $ we upper bound the integral $J(\mathsf{w})$
\begin{align}\label{estim app integ 2nd case}
\begin{aligned}
J(\mathsf{w})&=(2\pi)^{-d/2}\,\left(\dint_{\lVert x+\mathsf{w}\rVert \geq \lVert \mathsf{w}\rVert/2 } \frac{1}{\lVert \,x+\mathsf{w}\rVert^{\zeta}}e^{-\frac{\lVert x \rVert^2 }{2}} dx +\dint_{\lVert x+\mathsf{w}\rVert < \lVert \mathsf{w}\rVert/2 } \frac{1}{\lVert \,x+\mathsf{w}\rVert^{\zeta}}e^{-\frac{\lVert x \rVert^2 }{2}} dx \right)\\
&\leq (2\pi)^{-d/2}\,\left( \lVert \mathsf{w}\rVert^{-\zeta}\dint_{\mathbb{R}^d}e^{-\lVert x\rVert^2/2}dx+e^{-\lVert \mathsf{w}\rVert^2/8}\dint_{\lVert x+\mathsf{w}\rVert <\lVert \mathsf{w}\rVert/2}\frac{dx}{\lVert x+\mathsf{w}\rVert^{\zeta}}\right)\\
&\leq \mathsf{c}_2\, \left(\lVert \mathsf{w}\rVert^{-\zeta}+e^{-\lVert \mathsf{w}\rVert^2/8}\,\lVert \mathsf{w}\rVert^{d-\zeta} \right)\\
&\leq \mathsf{c}_3\,\lVert \mathsf{w}\rVert^{-\zeta}, 
\end{aligned}
\end{align}
where, in the first inequality, the bound of the second term follows from the fact that $\lVert x \rVert \geq \lVert \mathsf{w} \rVert/2$, the second and third inequalities follow from passing to polar coordinates and using the facts that $\zeta<d$ and that $\underset{r\in \mathbb{R}}{\sup}\,r^d\, e^{-r^2/2}<\infty$. Thus multiplying $J(\mathsf{w})$ by $\mathsf{u}^{-\zeta}$  and using the upper bound \eqref{estim app integ 2nd case} we obtain
\begin{align}\label{estim krnl 2nd case}
    I_n(s,t,\omega_2)\leq \mathsf{c}_3\, \lVert \mathsf{v}(\omega_2)\rVert^{-\zeta}=\mathsf{c}_3\, [K(s,t,\omega_2)]^{\zeta},
\end{align}
which finishes the proof in the case $(iv)$. 

Now using \eqref{finite graph energy}, \eqref{expect energy image} and \eqref{compar random kernels} we obtain that $\mathbb
{E}_{\mathbb{Q}_{1,n}}\left(\mathcal{E}_{euc,\zeta}\left(\mu_{\cdot,\omega_2}\right)\right)<\infty$. Therefore $\mathcal{E}_{euc,\zeta}\left(\mu_{\omega_1,\omega_2}\right)<\infty$ for $\mathbb{Q}_{1,n}$-almost all $\omega_1\in \Omega_{1,n}$, which implies that $\dim_{\operatorname{euc}} X(E,(\omega_1,\omega_2)) \geq \zeta$ \,for $\mathbb{Q}_{1,n}$-almost all $\omega_1\in \Omega_{1,n}$ and for all $\zeta<d \wedge \zeta_n(E)(\omega_2)$. Hence by letting $\zeta\uparrow d \wedge \zeta_n(E)(\omega_2)$ we get that
\begin{align}\label{final lwr bnd}
\dim_{\operatorname{euc}} X(E,(\omega_1,\omega_2)) \geq d\wedge \zeta_n(E)(\omega_2) \quad \text{for $\mathbb{Q}_{1,n}$-almost all $\omega_1\in \Omega_{1,n}$.}
\end{align} 
Accordingly, since $\omega_2\in \Omega_{n,\infty}$ is arbitrarily chosen, then using Fubini's theorem and \eqref{final lwr bnd} we obtain that 
\begin{align}
\begin{aligned}
 \mathbb{P}\left[ \dim_{\operatorname{euc}}X(E)\right.&\left.\geq d\wedge \zeta_n(E)\right]\\
 &=\mathbb{Q}_{1,n}\otimes \mathbb{Q}_{n,\infty}\left\{(\omega_1,\omega_2)\,:\, \dim_{\operatorname{euc}}X(E,(\omega_1,\omega_2))\geq d\wedge \zeta_n(E)(\omega_2)\right\} \\
 &=\dint_{\Omega_{n,\infty}} \mathbb{Q}_{1,n}\left[\omega_1\in \Omega_{1,n} \,:\, \dim_{\operatorname{euc}}X(E,(\omega_1,\omega_2))\geq d\wedge \zeta_n(E)(\omega_2)\right]\mathbb{Q}_{n,\infty}(d\omega_2)\\
 &=1.
\end{aligned}
\end{align}
Hence the proof of \eqref{value dim image} is complete.  
 
Now, since $\zeta_n(E)\wedge d$ does not depend on $n$, and since for all $n\geq 1$ we have $\zeta_n(E)$ measurable with respect to $\sigma\left(\left\{\xi_i\,:\, i\geq n+1\right\}\right)$, then $\dim_{\operatorname{euc}} X(E)$ is measurable with respect to the tail sigma-field of $(\xi_i)_{i\geq 1}$ and hence by the 0-1 law of Kolmogorov, it is constant almost surely, which finishes the proof.
\end{proof}
\begin{remark}
     The previous theorems \ref{0-1 law for the graph} and \ref{0-1 law for image} only use condition \eqref{suffic cont cond} which is sufficient for the mere existence of a continuous modification for $X$. Moreover, it was shown in Theorem \ref{Hausd dim image} under Condition \textbf{$\mathbf{(C_{0+})}$} that the constant $\mathbf{c}(E)$ and $\mathbf{C}(E)$ are nothing but $\dim_{\delta}(E) \wedge d$ and $\dim_{\delta}(E)$, respectively. But even if Condition \textbf{$\mathbf{(C_{0+})}$} fails, Theorems \ref{0-1 law for the graph} and \ref{0-1 law for image} show that the Hausdorff dimension of the image and graph are almost surely constants, and this is valid for the entire class of continuous Gaussian processes, including logBm and other extremely irregular continuous processes.

\end{remark}

\section{Criteria on hitting probabilities}
In this section we develop criteria for hitting probabilities of a Gaussian process $X$ where, as before, its canonical metric $\delta$ satisfies the commensurability condition $\mathbf{(\Gamma)}$. The concavity Hypothesis \ref{Hyp2} for the standard deviation function $\gamma$ will also be generically required. We also assume that $\gamma$ satisfies Condition \textbf{$\mathbf{(C_{0})}$}, or merely \textbf{$\mathbf{(C_{\varepsilon})}$}. Under these mild conditions, we will establish lower bounds for the probability that $X$ will hit a set $F$ from a set $E$, namely $\mathbb{P}\left\{X(E)\cap F\neq \varnothing\right\}$, in terms of capacities of $E\times F$, and upper bounds on that hitting probability in terms of Hausdorff measures of $E\times F$. Our conditions are general enough to apply to large classes of Gaussian processes within and beyond the H\"older scale. In the first subsection below, we present the main results of this section, which provide estimates under both Conditions \textbf{$\mathbf{(C_{0})}$} and \textbf{$\mathbf{(C_{\varepsilon})}$} for fixed $\varepsilon \in (0,1)$. These results suggest that a critical dimension can be identified under \textbf{$\mathbf{(C_{0+})}$}, i.e. for those processes which satisfy \textbf{$\mathbf{(C_{\varepsilon})}$} for every $\varepsilon$. This is the topic of the second subsection, wherein we show that in the critical dimension case, under \textbf{$\mathbf{(C_{0+})}$}, the hitting probability's positivity cannot be decided merely based on dimensions. In the third subsection, we investigate the so-called co-dimension of the image set $X(E)$, and we show in particular that it has an explicit expression under a mild regularity condition on the set $E$.
\subsection{General hitting probability estimates}\label{1st subsec of 4 sec}
Recall the metric $\rho_{\delta}$ on the product space, defined in \eqref{parabolic metric}. Our general result is the following.
\begin{theorem}\label{Hitting proba}
Let $X$ be a $d$-dimensional Gaussian process with i.i.d. components satisfying the commensurability condition $\mathbf{(\Gamma)}$. Let $0<a<b<\infty$ and $M>0$, and let $E\subset [a,b]$ and $F\subset [-M,M]^{d}$ be two Borel sets.  With the notation and conditions in Section 2, the following holds.   
\begin{itemize}
    \item[i)] If Hypothesis \ref{Hyp2} is satisfied, then there exists a constant $\mathsf{c}_1>0$ depending only on $a,b, M$ and the law of $X$, such that    
\begin{equation}\label{lower bnd hitting}
    \mathsf{c}_1\, \mathcal{C}_{\rho_{\delta}}^{d}(E\times F)\leq \mathbb{P}\left\{X(E)\cap F\neq \emptyset\right\}.
    \end{equation}
    \item[ii)] If Condition \textbf{$\mathbf{(C_{0})}$} is satisfied, then there exists a constant $\mathsf{c}_2>0$ also depending only on $a,b,M$, and the law of $X$, such that  \begin{equation}\label{upper bnd hitting}
        \mathbb{P}\left\{X(E)\cap F\neq \emptyset\right\}  \leq \mathsf{c}_2\, \mathcal{H}_{\rho_{\delta}}^{d}\left(E\times F\right).
    \end{equation}
    \item[iii)] If Condition \textbf{$\mathbf{(C_{\varepsilon})}$} is satisfied for some $\varepsilon\in (0,1)$, then there exists a constant $\mathsf{c}_{\varepsilon,3}>0$ depending on $a,b,M, \varepsilon$, and the law of $X$, such that 
\begin{align}\label{weak uppr bnd hitting}
    \mathbb{P}\left\{X(E)\cap F\neq \emptyset \right\}\leq \mathsf{c}_{3,\varepsilon}\, \mathcal{H}_{\rho_{\delta}}^{d(1-\varepsilon)}(E\times F).
\end{align}

\end{itemize}
\end{theorem}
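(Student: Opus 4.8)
The plan is to obtain the two upper bounds by a covering argument and the lower bound by the capacity/second-moment method; these are the two standard routes, but since the covariance is only assumed commensurate with $\gamma$ the two-point input will come from the two-point local nondeterminism of Lemma \ref{lem two pts LND} rather than from any strong nondeterminism.

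For (ii) and (iii) I would fix $\eta>0$ and choose a covering of $E\times F$ by $\rho_\delta$-balls $B_{\rho_\delta}((t_n,x_n),r_n)$ with each $r_n$ below the thresholds in Corollary \ref{cor estim small ball} (resp. Lemma \ref{lem upper bound}) and with $\sum_n(2r_n)^{\kappa}$ within $\eta$ of $\mathcal{H}^{\kappa}_{\rho_\delta}(E\times F)$, where $\kappa=d$ for (ii) and $\kappa=d(1-\varepsilon)$ for (iii); discarding balls disjoint from $E\times F$ and recentering at a nearby point of $E\times F$ at the cost of doubling the radius, we may assume $(t_n,x_n)\in E\times F$, so $t_n\in[a,b]$ and $x_n\in[-M,M]^d$. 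If $X(E)\cap F\neq\varnothing$ then $(t,X(t))\in E\times F$ for some $t$, hence $(t,X(t))\in B_{\rho_\delta}((t_n,x_n),2r_n)$ for some $n$, which forces $\inf_{s\in B_\delta(t_n,2r_n)\cap[a,b]}\|X(s)-x_n\|\le 2r_n$. In case (ii), Corollary \ref{cor estim small ball} bounds the probability of the $n$-th such event by $\mathsf{c}_9(2r_n)^d$; summing over $n$ and letting $\eta\to0$ gives \eqref{upper bnd hitting}. In case (iii), applying Condition $\mathbf{(C_\varepsilon)}$ with $x=\gamma^{-1}(l^{1/2}r)$ gives $f_\gamma(r)\le\mathsf{c}_\varepsilon(l^{1/2}r)^{1-\varepsilon}$, so $r+f_\gamma(r)\lesssim r^{1-\varepsilon}$ for $r$ small and Lemma \ref{lem upper bound} bounds the $n$-th probability by $\mathsf{c}(2r_n)^{d(1-\varepsilon)}$; summing and letting $\eta\to0$ gives \eqref{weak uppr bnd hitting}. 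Both bounds are trivial when the relevant Hausdorff measure is infinite.

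For (i) we may assume $\mathcal{C}^d_{\rho_\delta}(E\times F)>0$, and by inner regularity of capacity we may assume $E$ and $F$ compact. Fix $\sigma\in\mathcal{P}(E\times F)$ with $\mathcal{E}_{\rho_\delta,d}(\sigma)$ within a factor $1+\eta$ of $\mathcal{C}^d_{\rho_\delta}(E\times F)^{-1}$ and set, for $n\ge1$,
$$
Y_n:=n^{d}\int_{E\times F}\mathbf{1}_{\{\|X(t)-x\|\le 1/n\}}\,\sigma(dt\,dx).
$$
Since $X(t)\sim N(0,\gamma^2(t)I_d)$ with $\gamma^2$ bounded above and below on $[a,b]$ and $\|x\|\le M\sqrt d$, one gets $\mathbb{E}[Y_n]=n^{d}\int \mathbb{P}(\|X(t)-x\|\le1/n)\,\sigma(dt\,dx)\in[c_1,c_1']$ for all large $n$, with $c_1>0$. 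For the second moment, $\mathbb{E}[Y_n^2]=n^{2d}\int\!\!\int \mathbb{P}(\|X(t)-x\|\le1/n,\ \|X(s)-y\|\le1/n)\,\sigma(dt\,dx)\,\sigma(ds\,dy)$, and the key estimate is that, for $|t-s|$ below the constant $\varepsilon_0$ of Lemma \ref{lem two pts LND} and $n$ large,
$$
\mathbb{P}\big(\|X(t)-x\|\le1/n,\ \|X(s)-y\|\le1/n\big)\ \le\ \mathsf{c}\,n^{-2d}\,\rho_\delta((t,x),(s,y))^{-d}.
$$
I would prove this by writing the joint density of $(X(t),X(s))$ coordinatewise as $f_{X_0(s)}(\cdot)\,f_{X_0(t)\mid X_0(s)}(\cdot)$, bounding the conditional variance below by $\mathsf{c}_1\delta^2(s,t)$ via Lemma \ref{lem two pts LND}, and using the regression bound $|1-c(s,t)|\le\mathsf{c}_3\,\delta(s,t)$ from \eqref{estim correlation} together with the boundedness of $F$ to convert the Gaussian exponential into a factor $\exp(-\mathsf{c}\,\|x-y\|^2/\delta^2(s,t))$, which yields the power $\rho_\delta^{-d}=(\delta(s,t)\vee\|x-y\|)^{-d}$. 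For $|t-s|\ge\varepsilon_0$ the same probability is $O(n^{-2d})$ by conditioning (the conditional variances $\operatorname{Var}(X_0(t)\mid X_0(s))$ stay bounded below on that compact region), and this contribution is harmless because $\mathcal{E}_{\rho_\delta,d}(\sigma)$ is bounded below by the $\rho_\delta$-diameter of $E\times F$ to the power $-d$. Hence $\mathbb{E}[Y_n^2]\le\mathsf{c}'\,\mathcal{E}_{\rho_\delta,d}(\sigma)$ for all large $n$.

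The Paley--Zygmund inequality then gives $\mathbb{P}(Y_n>0)\ge(\mathbb{E}[Y_n])^2/\mathbb{E}[Y_n^2]\ge c_1^{2}/(\mathsf{c}'\,\mathcal{E}_{\rho_\delta,d}(\sigma))$ for all large $n$. The events $\{Y_n>0\}$ decrease in $n$, with $\bigcap_n\{Y_n>0\}=\{\inf_{(t,x)\in\operatorname{supp}\sigma}\|X(t)-x\|=0\}$, which is contained in $\{X(E)\cap F\neq\varnothing\}$ by compactness and continuity of $X$; therefore $\mathbb{P}(X(E)\cap F\neq\varnothing)\ge\mathbb{P}(\bigcap_n\{Y_n>0\})=\lim_n\mathbb{P}(Y_n>0)\ge c_1^{2}/(\mathsf{c}'\,\mathcal{E}_{\rho_\delta,d}(\sigma))$, and letting $\eta\to0$ gives \eqref{lower bnd hitting} with $\mathsf{c}_1=c_1^{2}/\mathsf{c}'$. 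I expect the main obstacle to be the two-point estimate displayed above: obtaining the bound in the product metric $\rho_\delta$ (not merely in $\delta$) forces one to retain the exponential decay in $\|x-y\|$ and to control it uniformly as the radius $1/n$ shrinks, and this is exactly where Lemma \ref{lem two pts LND}, hence Hypothesis \ref{Hyp2}, is indispensable; by comparison the off-diagonal bookkeeping and the passage to the limit are routine.
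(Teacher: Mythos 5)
Your proposal is correct and follows essentially the same route as the paper: the upper bounds (ii)–(iii) via a $\rho_\delta$-ball covering combined with the small-ball estimate of Lemma \ref{lem upper bound} and Corollary \ref{cor estim small ball}, and the lower bound (i) via a second-moment/Paley–Zygmund argument whose key two-point estimate rests on the local non-determinism of Lemma \ref{lem two pts LND} exactly as in the paper's bound on $J_1$. The only (cosmetic) difference is in part (i), where you regularize with indicator functions of shrinking balls and pass to the limit through the decreasing events $\{Y_n>0\}$, whereas the paper uses Gaussian-kernel–smoothed random measures $m_n$ and extracts a weakly convergent subsequence; both devices require the same compactness reduction and yield the same constant.
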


\begin{proof}
We begin by proving the lower bound in \eqref{lower bnd hitting}. 
Assume that $\mathcal{C}_{\rho_{\delta}}^{d}(E\times F)>0$ otherwise there is nothing to prove. This implies the existence of a probability measure $\mu\in \mathcal{P}(E\times F)$ such that 
\begin{equation}\label{bound energy}
\mathcal{E}_{\rho_{\delta},d}(\mu):=\int_{\mathbb{R}_+\times\mathbb{R}^d} \int_{\mathbb{R}_+\times \mathbb{R}^d} \frac{\mu(d u) \mu(d v)}{(\rho_{\delta}(u,v))^d} \leq \frac{2}{\mathcal{C}_{\rho_{\delta}}^{d}(E\times F)}.
\end{equation}
Consider the sequence of random measures $(m_n)_{n\geq 1}$ on $E\times F$ defined as 
\begin{align*}
    \begin{aligned}
        m_n(dt dx)&=(2 \pi n)^{d / 2} \exp \left(-\frac{n\|X(t)-x\|^{2}}{2}\right) \mu(d t d x)\\      &=\int_{\mathbb{R}^{d}} \exp \left(-\frac{\|\xi\|^{2}}{2 n}+i\langle\xi, X(t)-x\rangle\right) d \xi\,  \mu(dt d x).
    \end{aligned}
\end{align*}
Denote the total mass of $m_n$ by $\|m_n\|=m_n(E\times F)$. Let us first verify the following claim on the moments of $\|m_n\|$:
\begin{equation}\label{claim 1}    \mathbb{E}\left(\left\|m_{n}\right\|\right) \geq \mathsf{c}_{1}, \quad \text { and } \quad \mathbb{E}\left(\left\|m_{n}\right\|^{2}\right) \leq \mathsf{c}_{2} \mathcal{E}_{\rho_{\delta},d}(\mu),
\end{equation}
where the constants $\mathsf{c}_1$ and $\mathsf{c}_2$ are independent of $n$ and $\mu$.

First, we have 
\begin{equation}
\begin{aligned} \mathbb{E}\left(\|m_n\|\right) &=\int_{E\times F} \int_{\mathbb{R}^{d}} \exp \left(-\frac{\lVert \xi\rVert^{2}}{2}\left(\frac{1}{n}+\gamma^{2}(t)\right)-i\langle \xi, x\rangle\right) d \xi \mu(d t d x) \\
& \geq \int_{E\times F} \frac{(2 \pi)^{d / 2}}{\left(1+\gamma^{2}(t)\right)^{d / 2}} \exp \left(-\frac{\lVert x\rVert^{2}}{2 \gamma^{2}(t)}\right) \mu( d t d x) \\
& \geq \frac{(2 \pi)^{d / 2}}{\left(1+\gamma^{2}(b)^{d / 2}\right.} \exp \left(-\frac{d M^{2}}{2 \gamma^{2}(a)}\right)\int_{E\times F} \mu(dt dx)=: \mathsf{c}_{1},
\end{aligned}
\end{equation}
This proves the first inequality in \eqref{claim 1}. We have also 
\begin{align}\label{2nd moment}
    \begin{aligned}
\mathbb{E}\left(\left\|m_{n}\right\|^{2}\right)=\int_{(E\times F)^2}  \int_{\mathbb{R}^{2 d}} &e^{-i(\langle\xi, x\rangle+\langle\eta, y\rangle)} \,
\times \exp \left(-\frac{1}{2}(\xi, \eta)\Gamma_n(t,s)(\xi, \eta)^{T}\right) d \xi \, d \eta\,  \mu(dt d x) \mu(d s d y),
\end{aligned}
\end{align}
where $\Gamma_n(t,s)=\left(n^{-1} I_{2 d}+\operatorname{Cov}(X(s), X(t))\right)$,  where $I_{2d}$ denotes the $2d\times 2d$ identity matrix, and  where $\operatorname{Cov}(X(s), X(t))$ is the $2d$-covariance matrix of $\left(X(s),X(t)\right)$.   
Now let $\varepsilon>0$ so that \eqref{two pts LND} is satisfied for all $s,t \in [a,b]$ such that $|t-s|<\varepsilon$. Using the same lines as Step 1 and Step 2 of the proof of Theorem 2.5 in \cite{Eulalia&Viens2013} we obtain that 
$$
\mathbb{E}\left(\lVert m_n\rVert^2\right)\leq J_1+ J_2,
$$
where 
\begin{align*}
    &J_1:=\dint_{(E\times F)^{2}\cap D(\varepsilon)} \frac{(2 \pi)^d}{\left(\sqrt{\operatorname{det}\left(\Phi_n(s, t)\right)}\right)^d} \exp \left(-\frac{c_2}{2} \frac{\lVert x-y\rVert^2}{\operatorname{det}\left(\Phi_n(s, t)\right)}\right)\mu(dtdx)\mu(dsdy)\\
    &J_2:=\dint_{(E\times F)^{2}\setminus D(\varepsilon)} \frac{(2 \pi)^d}{\left(\sqrt{\operatorname{det}\left(\Phi_n(s, t)\right)}\right)^d}\,\mu(dtdx)\mu(dsdy),
\end{align*}
where $D(\varepsilon):=\left\{\left((t,x),(s,y)\right): |t-s|<\varepsilon\right\}$ and $\Phi_n(s,t):=n^{-1}I_2+ \mathrm{Cov}(X_0(s),X_0(t))$.

First we bound  $J_2$. Observe that
\begin{equation}\label{lwr bnd determinant}
    \operatorname{det}\left(\Phi_n(s, t)\right) \geq \mathbb{E}(X_0^2(s)) \mathbb{E}(X_0^2(t))-\left(\mathbb{E}X_0(t)X_0(s)\right)^2=:h(s,t).
\end{equation}
By the Cauchy-Schwartz inequality, the function $(s, t) \mapsto h(s,t)$ is nonnegative, and since $\gamma(r)=0 \Leftrightarrow r=0$, this function is strictly positive and continuous away from the diagonal $\{s=t\}$. Therefore, for all $s, t \in[a, b]$ with $|t-s|>\varepsilon$, $\operatorname{det}\left(\Phi_n(s, t)\right) \geq \mathsf{c}_3$, where $\mathsf{c}_3$ is a positive constant depending on $[a,b]$. Hence
\begin{align}
    J_2&\leq (2\pi/\mathsf{c}_3^{1/2})^d\,\dint_{(E\times F)^2\setminus D(\varepsilon)}\mu(dtdx)\mu(dsdy)\nonumber\\
    &\leq (2\pi/\mathsf{c}_3^{1/2})^d\, \sup_{(u,v)\in (E\times F)^2}\left(\rho_{\delta}\left(u,v\right)\right)^d\dint_{(E\times F)^2}\frac{\mu(du)\mu(dv)}{\rho_{\delta}\left((t,x),(s,y)\right)^d}=\mathsf{c}_4\,\mathcal{E}_{\rho_{\delta},d}(\mu).
\end{align}
Let us now bound $J_1$. If $((t,x),(s,y))\in D(\varepsilon)$ then \eqref{lwr bnd determinant} and Lemma \ref{lem two pts LND} ensures that for some constant $\mathsf{c}_5>0$ 
$$ 
\operatorname{det}\left(\Phi_n(s, t)\right)\geq \mathsf{c}_5\,\gamma^2(a)\, \delta^2(s,t).
$$
Observe that if\,  $\operatorname{det}\left(\Phi_n(s, t)\right)<\lVert x-y\rVert^2$, using the fact that $\sup_{x\in \mathbb{R}}x^{d/2}e^{-c\,x}<\infty$, then 
$$
\frac{(2 \pi)^d}{\left({\left.\operatorname{det}\left(\Phi_n(s, t)\right)\right)}\right)^{d/2}} \exp \left(-\frac{\mathsf{c}_3}{2} \frac{\lVert x-y\rVert^2}{\operatorname{det}\left(\Phi_n(s, t)\right)}\right) \leq \frac{\mathsf{c}_6}{\lVert x-y \rVert^d}.
$$
On the other hand, when $\operatorname{det}\left(\Phi_n(s, t)\right)\geq \lVert x-y\rVert^2$ we get 
$$
\frac{(2 \pi)^d}{\left({\left.\operatorname{det}\left(\Phi_n(s, t)\right)\right)}\right)^{d/2}} \exp \left(-\frac{\mathsf{c}_3}{2} \frac{\lVert x-y\rVert^2}{\operatorname{det}\left(\Phi_n(s, t)\right)}\right) \leq \frac{(2\pi)^d}{\mathsf{c}_5^{d/2}\, \gamma^d(a)\delta(s,t)^d}.
$$
Therefore we conclude that 
\begin{align}\label{bound I1}
    J_1\leq \mathsf{c}_7\, \dint_{(E\times F)^2}\frac{\mu(dtdx)\mu(dsdy)}{\left(\max\{\delta(s,t), \lVert x-y\rVert\}\right)^d}=\mathsf{c}_7\, \mathcal{E}_{\rho_{\delta},d}(\mu),
\end{align}
for some constant $\mathsf{c}_7$. The proof of our moment estimates in claim \eqref{claim 1} is complete.

Now, using these moment estimates in \eqref{claim 1} and the Paley–Zygmund inequality (c.f. Kahane \cite{Kahane}, p.8), one can check that $\{m_n, n\geq 1\}$ has a subsequence that converges weakly to a finite random measure $m_{\infty}$ supported on the set $\{(s,x)\in E \times F : X(s)=x\}$, which is positive on an event of positive probability and also satisfying the moment estimates of \eqref{claim 1}. Therefore, using again the Paley-Zygmund inequality, we conclude that
$$
\mathbb{P}\left\{X(E) \cap F \neq \varnothing\right\} \geq \mathbb{P}\left\{\|m_{\infty}\|>0\right\} \geq \frac{\mathbb{E}(\|m_{\infty}\|)^{2}}{\mathbb{E}\left(\|m_{\infty}\|^{2}\right)} \geq \frac{\mathsf{c}_{1}^{2}}{\mathsf{c}_{2} \mathcal{E}_{\rho_{\delta},d}(\mu)}.
$$
By definition of capacity, this finishes the proof of \eqref{lower bnd hitting}.


For the upper bound in \eqref{upper bnd hitting}, we use a simple covering argument. We choose an arbitrary constant $\zeta>\mathcal{H}_{\rho_{\delta}}^d(E\times F)$. Then there is a covering of $E\times F$ by balls $\{B_{\rho_{\delta}}((t_i,x_i),r_i), i\geq 1\}$ in $\left(\mathbb{R}_+\times\mathbb{R}^d,\rho_{\delta}\right)$ with small radii $r_i$, such that 
\begin{equation}\label{cover product}
    E\times F\subseteq \bigcup_{i=1}^{\infty}B_{\rho_{\delta}}((t_i,x_i),r_i)\quad  \text{with }\quad  \sum_{i=1}^{\infty}(2r_i)^{d}\leq \zeta.
\end{equation}
It follows that
		\begin{align}\label{event covering}
		\left\{ X(E)\cap F\neq\emptyset\right\}  
        &  = {\bigcup_{i=1}^{\infty}\left\{\,X\left( B_{\delta}(t_i,r_i)\right) \cap  B(x_i,r_i)\neq \varnothing \right\}} \nonumber\\
		& \subseteq\bigcup_{i=1}^{\infty}\left\{ \inf _{ t\in B_{\delta}(t_i,r_i)}\|X(t)-x_i\| \leqslant r_i\right\}. 
		\end{align}
Since Condition \eqref{condition raisonable} is satisfied, using Corollary \ref{cor estim small ball} and \eqref{event covering} we obtain 
\begin{align}\label{final estim proba}
\mathbb{P}\left\{ X(E)\cap F\neq\emptyset\right\} &\leq \sum_{i=1}^{\infty}\mathbb{P
}\left\{ \inf _{ t\in B_{\delta}(t_i,r_i)}\|X(t)-x_i\| \leqslant r_i\right\}
\nonumber \\
&\leq \mathsf{c}_8\, \sum_{i=1}^{\infty} (2r_i)^{d}\leq \mathsf{c}_8\, \zeta.
\end{align}
Let $\zeta\downarrow \mathcal{H}_{\rho_{\delta}}^d(E\times F)$, the upper bound in \eqref{upper bnd hitting} follows. 

For  the upper bound in \eqref{weak uppr bnd hitting}, first note that condition \eqref{nice condition} ensures that 
\begin{align}\label{proba inf weak bnd}
      \mathbb{P}\left\{\inf _{ t\in B_{\delta}(t,r)}\|X(t)-x\| \leqslant r\right\}\leq \mathsf{c}_9\, r^{d(1-\varepsilon)} \quad\text{ for all $0<r<r_0$ and $x\in [-M,M]^d$ }
\end{align}
where $r_0$ and $\mathsf{c}_9$ are two positive constants. Hence the proof of \eqref{weak uppr bnd hitting} follows from the same argument as in \eqref{event covering}, \eqref{cover product} and \eqref{final estim proba}, and by using \eqref{proba inf weak bnd} instead of Corollary \ref{cor estim small ball}.
\end{proof}

The following corollary suggests that $\dim_{\rho_{\delta}}(E\times F)=d$ is a critical dimension for computing hitting probabilities. 

\begin{corollary}\label{polarity in terms of dim of ExF}
Let $E,F$ be two bounded Borel sets in $\mathbb{R}_+$ and $\mathbb{R}^d$ respectively. Under Hypothesis \ref{Hyp2} and Condition \textbf{$\mathbf{(C_{0+})}$} we have
\begin{align}\label{hitt in terms of dim}
    \mathbb{P}\left\{X(E)\cap F\neq \varnothing \right\}\left\{\begin{array}{ll}
>0 & \text { if } \dim_{\rho_{\delta}}(E\times F)>d \\
=0 & \text { if } \dim_{\rho_{\delta}}(E\times F)<d
\end{array}\right..
\end{align}
\end{corollary}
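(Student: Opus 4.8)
The plan is to read both implications off Theorem \ref{Hitting proba} directly, combining the lower bound \eqref{lower bnd hitting} with the capacity description \eqref{altern dim rho_delta} of $\dim_{\rho_{\delta}}$, and the upper bound \eqref{weak uppr bnd hitting} with the Hausdorff-measure description \eqref{Haus dim product 1}.

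First I would dispose of a minor reduction. Theorem \ref{Hitting proba} is stated for $E\subset[a,b]$ with $0<a<b<\infty$ and $F\subset[-M,M]^{d}$, whereas here $E$ and $F$ are only bounded. Writing $E\setminus\{0\}=\bigcup_{k\ge 1}E_k$ with each $E_k$ contained in some compact subinterval of $(0,\infty)$, one has $\{X(E\setminus\{0\})\cap F\neq\varnothing\}=\bigcup_k\{X(E_k)\cap F\neq\varnothing\}$, so by countable subadditivity it suffices to treat each $E_k$; and since Hausdorff dimension is countably stable, $\dim_{\rho_{\delta}}\big((E\setminus\{0\})\times F\big)=\sup_k\dim_{\rho_{\delta}}(E_k\times F)$. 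The value $t=0$ contributes nothing new, since $X(0)=0$ almost surely because $\gamma(0)=0$. Hence I would assume from now on that $E\subset[a,b]$ and $F\subset[-M,M]^{d}$ as in Theorem \ref{Hitting proba}; in particular $\rho_{\delta}$ is bounded on $E\times F$.

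For the first case, suppose $\dim_{\rho_{\delta}}(E\times F)>d$. By the characterization \eqref{altern dim rho_delta} there is $\alpha>d$ with $\mathcal{C}_{\rho_{\delta}}^{\alpha}(E\times F)>0$; since $\rho_{\delta}$ is bounded on $E\times F$ and $\alpha>d$, the $d$-energy of any measure is dominated by a fixed constant multiple of its $\alpha$-energy, so $\mathcal{C}_{\rho_{\delta}}^{d}(E\times F)>0$ as well. As Hypothesis \ref{Hyp2} is in force, \eqref{lower bnd hitting} of Theorem \ref{Hitting proba} gives $\mathbb{P}\{X(E)\cap F\neq\varnothing\}\ge \mathsf{c}_1\,\mathcal{C}_{\rho_{\delta}}^{d}(E\times F)>0$. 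For the second case, suppose $\dim_{\rho_{\delta}}(E\times F)<d$ and pick $\varepsilon\in(0,1)$ small enough that $d(1-\varepsilon)>\dim_{\rho_{\delta}}(E\times F)$. Condition $(\mathbf{C_{0+}})$ guarantees that $(\mathbf{C_\varepsilon})$ holds, so \eqref{weak uppr bnd hitting} of Theorem \ref{Hitting proba} yields $\mathbb{P}\{X(E)\cap F\neq\varnothing\}\le \mathsf{c}_{3,\varepsilon}\,\mathcal{H}_{\rho_{\delta}}^{d(1-\varepsilon)}(E\times F)$; since $d(1-\varepsilon)$ exceeds $\dim_{\rho_{\delta}}(E\times F)$, the characterization \eqref{Haus dim product 1} forces $\mathcal{H}_{\rho_{\delta}}^{d(1-\varepsilon)}(E\times F)=0$, so the hitting probability vanishes.

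There is essentially no serious obstacle: the statement is a formal consequence of Theorem \ref{Hitting proba} together with the dimension characterizations \eqref{altern dim rho_delta} and \eqref{Haus dim product 1}. The only points requiring a line of care are the countable-decomposition reduction to an interval bounded away from $0$, and the monotonicity of $\mathcal{C}_{\rho_{\delta}}^{\alpha}$ in the order $\alpha$ on a set of finite $\rho_{\delta}$-diameter; the genuinely delicate critical case $\dim_{\rho_{\delta}}(E\times F)=d$ is deliberately left open, being the subject of the next subsection.
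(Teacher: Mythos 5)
Your argument is correct and is exactly the intended deduction: the paper states Corollary \ref{polarity in terms of dim of ExF} without a separate proof, as an immediate consequence of Theorem \ref{Hitting proba}, and your combination of \eqref{lower bnd hitting} with the capacity characterization \eqref{altern dim rho_delta} (via monotonicity of the energies on a set of bounded $\rho_{\delta}$-diameter) and of \eqref{weak uppr bnd hitting} with \eqref{Haus dim product 1} is precisely that consequence. The only caveat concerns your reduction step: if $0\in E$ and $0\in F$ the event occurs surely because $X(0)=0$ a.s., irrespective of $\dim_{\rho_{\delta}}(E\times F)$, so the corollary implicitly requires $E$ to be bounded away from $0$ as in Theorem \ref{Hitting proba} --- a looseness of the statement rather than a flaw in your argument.
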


We explore this criticality in the next subsection, using general sets $E$ and processes $X$.

\subsection{Hitting probabilities: undecidability in the critical dimension case}

We now show that the critical dimension case, $\dim_{\rho_{\delta}}(E\times F)=d$, is undecidable, for a large class of functions $\gamma$ satisfying \textbf{$\mathbf{(C_{0+})}$}, in the following sense: there exist compact sets $E_1,E_2 \subset [0,1]$ and $F_1,F_2\subset [M,M]^d$ such that $\dim_{\rho_{\delta}}(E_1\times F_1)=\dim_{\rho_{\delta}}(E_2\times F_2)=d$ and
\begin{align}\label{critical case}
    \mathbb{P}\left\{X(E_1)\cap F_2 \neq \varnothing \right\}>0 \quad \text{ and } \quad \mathbb{P}\left\{X(E_2)\cap F_2\neq \varnothing \right\}=0.
\end{align}
We start with providing some lower bounds and upper bounds on $\mathbb{P}\left\{X(E)\cap F\neq \emptyset \right\}$ when $E$ satisfies the Ahlfors-David regularity in the metric $\delta$. This will be the key to prove \eqref{critical case}. 
First, we recall the definition of an Ahlfors-David regular set.
\begin{definition}\label{Ahlf-Dav regular}
 Let $(X,\rho)$ be a bounded metric space, let $\alpha>0$, and let $G\subset X$. We say that $G$ is $\alpha$-Ahlfors-David regular if there exists a Borel probability measure $\mu$ on $G$ and a positive constant $\mathsf{c}_{0}$ such that 
\begin{equation}\label{Ahlf-Dav regular cond}
	\mathsf{c}_{0}^{-1}\,r^{\alpha}\leq\mu\left(B_{\rho}\left(a,r\right)\right)\leq\mathsf{c}_{0}\,r^{\alpha}\,\,\text{ for all \ensuremath{a\in G}, and all \, \ensuremath{0<r\leq1}}.
\end{equation}
\end{definition}
To best represent the delicate size of our hitting probabilities of interest, we find it necessary to introduce a finer concept of regularity for our standard deviation function $\gamma$, using slowly-varying modulation. Let $\ell:(0,\infty)\rightarrow \mathbb{R}_+$ be a slowly varying function at $0$, such that $\lim_{y\rightarrow 0}\ell(y)=c\in (0,+\infty]$. We denote the following condition $(\mathbf{C}_{\ell})$, 

$(\mathbf{C}_{\ell})$:
There exist two constants $\mathsf{c}_1>0$ and $x_0\in (0,1)$ such that 
\begin{align}\label{condition raisonable avec ell}
\int_{0}^{1 / 2} \gamma(x y) \frac{d y}{y \sqrt{\log (1 / y)}} \leq \mathsf{c}_1\, \gamma(x)\ell\left(\gamma(x)\right)\quad \text{ for all $x\in [0,x_0]$}.
\end{align}
\begin{remark}
\hspace*{1in}
\begin{itemize}
    \item[i)] This condition $(\mathbf{C}_{\ell})$ is slightly stronger than $(\mathbf{C}_{0+})$,  and weaker than $(\mathbf{C}_{0})$ when $\lim_{y\rightarrow 0}\ell(y)=+\infty$. Moreover it is satisfied by a large class of functions $\gamma$ with zero index of interest to us, including the example $\gamma(x)=\exp\left(-\log^{q}(1/x)\right)$ with $q\in (0,1)$.
    \item[ii)] When $\lim_{y\rightarrow 0}\ell(y)<+\infty$, the conditions $(\mathbf{C}_{0})$ and $(\mathbf{C}_{\ell})$ are equivalent. 
    \item[iii)] The case of  $\lim_{y\rightarrow 0}\ell(y)=0$ does not occur. Indeed, one can show that, up to a multiplicative constant, $\gamma(x)$ is a lower bound of the integral in Condition $(\mathbf{C}_{\ell})$.
\end{itemize} 
\end{remark}
This modulated condition $(\mathbf{C}_{\ell})$ is naturally accompanied by the more general notion of Hausdorff measure with a gauge function other than the power function, which we will also need. For a metric space $(X,\rho)$ and a function $\varphi:\mathbb{R}_+\rightarrow \mathbb{R}_+$, right-continuous and increasing near zero with $\lim_{0+}\varphi=0$, 
and $G\subseteq X$ be a Borel set, the $\varphi$-Hausdorff measure of $G$ in the metric $\rho$ is defined by 
\begin{equation}\label{phi-Hausd meas}
\mathcal{H}_{\rho}^{\varphi}(G)=\lim_{\eta \rightarrow 0}\inf \left\{\sum_{n=1}^{\infty}\varphi\left(2 r_{n}\right): G \subseteq \bigcup_{n=1}^{\infty} B_{\rho}\left(r_{n}\right),\,\, r_{n} \leqslant \eta \right\}.
\end{equation}

The same reasoning as in the proof of Theorem \ref{Hitting proba} leads to an upper bound more accurate than \eqref{weak uppr bnd hitting}, under the condition $(\mathbf{C}_{\ell})$. The proof of the following theorem is thus left to the interested reader. 
\begin{theorem}
    Let $0<a<b<\infty$ and $M>0$, and let $E\subset [a,b]$ and $F\subset [-M,M]^{d}$ be two Borel sets. If $\gamma$ satisfies the hypothesis \ref{Hyp2} and the condition $(\mathbf{C_{\ell}})$, then 
\begin{equation}\label{lwr-uppr bnd hitt C_l}
\mathsf{c}^{-1}_2\, \mathcal{C}^d_{\rho_{\delta}}(E\times F)\leq \mathbb{P}\left\{X(E)\cap F\neq \emptyset\right\}\leq \mathsf{c}_2\,\mathcal{H}_{\rho_{\delta}}^{\varphi_d}(E\times F),
\end{equation}
where $\varphi_d(x):=x^d\,\ell^{d}(x).$
\end{theorem}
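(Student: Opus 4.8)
The plan is to split \eqref{lwr-uppr bnd hitt C_l} into its two halves, noting that only the upper bound actually uses $(\mathbf{C}_\ell)$, and that it is obtained from the scheme of Theorem \ref{Hitting proba} by feeding a sharper small-ball estimate into the same covering argument.

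First, the lower bound. This is literally part (i) of Theorem \ref{Hitting proba}: under Hypothesis \ref{Hyp2} alone one constructs the mollified random measures $m_n(dt\,dx)=(2\pi n)^{d/2}\exp(-\tfrac{n}{2}\|X(t)-x\|^2)\,\mu(dt\,dx)$ on $E\times F$ for a measure $\mu$ nearly realizing $\mathcal{C}_{\rho_\delta}^d(E\times F)$, establishes the first- and second-moment bounds $\mathbb{E}(\|m_n\|)\ge\mathsf{c}_1$ and $\mathbb{E}(\|m_n\|^2)\le\mathsf{c}_2\,\mathcal{E}_{\rho_\delta,d}(\mu)$ using the two-point local nondeterminism of Lemma \ref{lem two pts LND}, extracts a weak subsequential limit $m_\infty$ supported on $\{(s,x)\in E\times F:X(s)=x\}$, and closes with the Paley--Zygmund inequality. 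Nothing new is needed; I would simply cite that computation, which gives $\mathbb{P}\{X(E)\cap F\neq\emptyset\}\ge\mathsf{c}\,\mathcal{C}^d_{\rho_\delta}(E\times F)$.

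For the upper bound, the first step is to upgrade Lemma \ref{lem upper bound} under $(\mathbf{C}_\ell)$ the way Corollary \ref{cor estim small ball} did under $(\mathbf{C}_0)$. Writing $f_\gamma(r)=\int_0^{1/2}\gamma\!\big(\gamma^{-1}(l^{1/2}r)\,y\big)\,y^{-1}(\log(1/y))^{-1/2}\,dy$ and substituting $x=\gamma^{-1}(l^{1/2}r)$, so $\gamma(x)=l^{1/2}r$, condition $(\mathbf{C}_\ell)$ gives $f_\gamma(r)\le\mathsf{c}_1\,l^{1/2}r\,\ell(l^{1/2}r)$ for all $r$ small enough that $\gamma^{-1}(l^{1/2}r)\le x_0$. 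Since $\ell$ is slowly varying at $0$, $\ell(l^{1/2}r)/\ell(r)\to1$, and since $\lim_{y\to0}\ell(y)=c\in(0,+\infty]$ the function $\ell$ is bounded below by a positive constant near $0$; hence $r+f_\gamma(r)\le\mathsf{c}\,r\,\ell(r)$ for small $r$, and Lemma \ref{lem upper bound} then yields, uniformly in $t\in I$ and $z\in[-M,M]^d$ and for $r$ in some interval $(0,r_0')$,
\[
\mathbb{P}\Big\{\inf_{s\in B_\delta(t,r)\cap I}\|X(s)-z\|\le r\Big\}\le\mathsf{c}_9\,\big(r\,\ell(r)\big)^d=\mathsf{c}_9\,\varphi_d(r).
\]
With this in hand, I would run the covering argument of \eqref{upper bnd hitting} verbatim with $\varphi_d$ replacing $x^d$: fix $\zeta>\mathcal{H}^{\varphi_d}_{\rho_\delta}(E\times F)$, pick a cover $\{B_{\rho_\delta}((t_i,x_i),r_i)\}_i$ of $E\times F$ with arbitrarily small radii and $\sum_i\varphi_d(2r_i)\le\zeta$, discard the balls disjoint from $E\times F$ (so $t_i$ lies in a fixed compact neighbourhood of $[a,b]$ and $x_i$ in one of $[-M,M]^d$), use the inclusion $\{X(E)\cap F\neq\emptyset\}\subseteq\bigcup_i\{\inf_{t\in B_\delta(t_i,r_i)}\|X(t)-x_i\|\le r_i\}$ together with the displayed estimate and the monotonicity of $\varphi_d$ near $0$ to get $\mathbb{P}\{X(E)\cap F\neq\emptyset\}\le\mathsf{c}_9\sum_i\varphi_d(r_i)\le\mathsf{c}_9\sum_i\varphi_d(2r_i)\le\mathsf{c}_9\,\zeta$, and let $\zeta\downarrow\mathcal{H}^{\varphi_d}_{\rho_\delta}(E\times F)$.

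The only genuinely delicate points are the slowly-varying bookkeeping: that $\ell(l^{1/2}r)\asymp\ell(r)$ as $r\to0$ (Karamata's uniform convergence theorem), that $\ell$ stays bounded away from $0$ near the origin so the ``$+r$'' term is absorbed into $r\ell(r)$, and that $\varphi_d(x)=x^d\ell^d(x)$ is an admissible gauge, i.e. right-continuous, increasing near $0$, and $\to0$ at $0$. The last holds because $\varphi_d$ is regularly varying of positive index $d$; if strict monotonicity near $0$ is wanted one first replaces $\ell$ by an asymptotically equivalent normalized $\mathcal{C}^\infty$ version via \cite[Proposition 1.3.4]{Bingham et al}, exactly as in the proof of Proposition \ref{prop RVF}, after which $\varphi_d'(x)=d\,x^{d-1}\ell^{d-1}(x)\,\ell(x)(1-\widetilde\varepsilon(x))>0$ for small $x$. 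None of this is hard, which is why the authors leave it to the reader; the substance is entirely in the one-line passage from $(\mathbf{C}_\ell)$ to the $\varphi_d$-modulated small-ball bound.
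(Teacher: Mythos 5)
Your proposal is correct and follows exactly the route the paper intends: the authors explicitly state that the lower bound is part (i) of Theorem \ref{Hitting proba} (needing only Hypothesis \ref{Hyp2}) and that "the same reasoning" as in that theorem's covering argument gives the upper bound, leaving the details to the reader. Your substitution $x=\gamma^{-1}(l^{1/2}r)$ in Lemma \ref{lem upper bound} to convert $(\mathbf{C}_\ell)$ into the small-ball bound $\mathsf{c}\,\varphi_d(r)$, together with the slowly-varying bookkeeping ($\ell(l^{1/2}r)\asymp\ell(r)$, $\ell$ bounded below near $0$) and the verbatim rerun of the covering step with the gauge $\varphi_d$, is precisely the intended argument and contains no gaps.
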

If $E$ is an $\alpha$-Ahlfors-David regular set in the metric $\delta$, the hitting probability estimates \eqref{lwr-uppr bnd hitt C_l} take a more specific form. Namely the lower and upper bounds are given, respectively, in terms of the Bessel-Riesz capacity of $F$ and the Hausdorff measure of $F$ in the Euclidean metric, the latter still being relative to the $\ell$-modulated power function. However, when $\alpha$ reaches the critical dimension $d$, the capacity lower bound requires the use of a logarithmic metric. To be specific, we have the following proposition, whose proof, based on the previous theorem, requires a bit of care, and is therefore included below. 

\begin{proposition}\label{bnds in trm of E resp. F} 
Let $X$ be a $d$-dimensional Gaussian process such that its standard deviation function $\gamma$ satisfies Condition $\mathbf{(\Gamma)}$, Hypothesis \ref{Hyp2} and Condition \textbf{$\mathbf{(C_{\ell})}$}. Let $0<a<b<\infty$ and $M>0$. Also let $E\subset [a,b]$ be a $\alpha$-Ahlfors-David regular set in the metric $\delta$ for some $0<\alpha \leq d$. Then for all $0<M\leq 1$ and  $F\subset [-M,M]^d$ the following two alternatives hold, depending on whether $\alpha$ equals the critical dimension $d$.
 \begin{itemize}
     \item[i-1)] If $\alpha<d$ and $\gamma$ satisfies Condition \textbf{$\mathbf{(C_{0})}$}  then
\begin{equation}\label{lwr-uppr bnd(F)}         \mathsf{c}_3^{-1}\,\mathcal{C}_{\operatorname{euc}}^{d-\alpha}\left(F\right) \leq \mathbb{P}\left\{ X(E)\cap F\neq \emptyset \right\}\leq\, \mathsf{c}_3\, \mathcal{H}_{\operatorname{euc}}^{{d-\alpha}}(F),
     \end{equation} 
     
     \item[i-2)] If $\alpha<d$ and $\gamma$ satisfies Condition \textbf{$\mathbf{(C_{\ell})}$} for some $\ell$ given such that $\lim_{y\rightarrow 0}\ell(y)=+\infty$, then we have
\begin{equation}\label{lwr-uppr bnd(F)}         \mathsf{c}_3^{-1}\,\mathcal{C}_{\operatorname{euc}}^{d-\alpha}\left(F\right) \leq \mathbb{P}\left\{ X(E)\cap F\neq \emptyset \right\}\leq\, \mathsf{c}_3\, \mathcal{H}_{\operatorname{euc}}^{\varphi_{d-\alpha}}(F),
     \end{equation}
where $\varphi_{d-\alpha}(x):=x^{d-\alpha}\,\ell^{d}(x)$ and $\mathsf{c}_3$ is a positive constant depends on $a,\,b,\,M$ and $\alpha$ only.
   
   \item[ii)] If $\alpha=d$ then
   \begin{equation}\label{hitting proba when alpha=d}
\mathsf{c}_4\,\mathcal{C}_{\delta_{\log}}^{1}\left(F\right)\leq \mathbb{P}\left\{ X(E)\cap F\neq \emptyset \right\}
   \end{equation}
where the metric $\delta_{\log}(\cdot)$ is defined on $[-M,M]^d$ by \,$\delta_{\log}(x,y):=-\log^{-1}(\lVert x-y \rVert)$.
 \end{itemize}
%
%
\end{proposition}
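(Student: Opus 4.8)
The plan is to derive all three items from the sandwich estimate \eqref{lwr-uppr bnd hitt C_l} of the theorem just above, by transferring the capacity lower bound and the $\varphi_d$-Hausdorff-measure upper bound for $E\times F$ in the metric $\rho_\delta$ to estimates for $F$ alone in a Euclidean (or, in the critical case, logarithmic) metric. The bridge in both directions is the Ahlfors--David regularity of $E$: writing $\mu_E$ for the AD-regular probability measure on $E$, so that $\mathsf{c}_0^{-1}r^\alpha\le\mu_E(B_\delta(t,r))\le\mathsf{c}_0 r^\alpha$ for $t\in E$ and $0<r\le 1$, I use the upper bound on $\mu_E$ of balls to control energies and the resulting covering estimate to control Hausdorff measures.

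For the lower bounds in all of i-1), i-2) and ii), fix an arbitrary $\sigma\in\mathcal P(F)$ and consider the product measure $\mu_E\otimes\sigma\in\mathcal P(E\times F)$. Since $B_{\rho_\delta}((t,x),r)=B_\delta(t,r)\times B(x,r)$, one has $(\mu_E\otimes\sigma)(B_{\rho_\delta}((t,x),r))\le\mathsf{c}_0 r^\alpha\sigma(B(x,r))$. Plugging this into the dyadic decomposition \eqref{decomp energy} written for $\rho_\delta$ gives, for fixed $(t,x)$,
$$\int_{E\times F}\frac{(\mu_E\otimes\sigma)(ds\,dy)}{\rho_\delta((t,x),(s,y))^d}\le \mathsf{c}\sum_{k}2^{k(d-\alpha)}\,\sigma\!\left(B(x,2^{-k})\right).$$
When $\alpha<d$, reversing the order of summation over dyadic annuli shows the right-hand side is at most a constant times $\int_F\|x-y\|^{-(d-\alpha)}\sigma(dy)$; integrating once more against $\mu_E\otimes\sigma$ yields $\mathcal E_{\rho_\delta,d}(\mu_E\otimes\sigma)\le\mathsf{c}\,\mathcal E_{\operatorname{euc},d-\alpha}(\sigma)$, hence $\mathcal C^{d}_{\rho_\delta}(E\times F)\ge\mathsf{c}^{-1}\mathcal C^{d-\alpha}_{\operatorname{euc}}(F)$, and the lower bounds in i-1) and i-2) follow from \eqref{lwr-uppr bnd hitt C_l}. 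When $\alpha=d$ the geometric factor $2^{k(d-\alpha)}$ disappears and the sum becomes $\sum_k\sigma(B(x,2^{-k}))$, which is comparable (up to multiplicative and additive constants, using $M\le 1$ so that $\log(1/\|x-y\|)>0$ on $F\times F$) to $\int_F\log(1/\|x-y\|)\sigma(dy)=\mathcal E_{\delta_{\log},1}(\sigma)$; this gives $\mathcal E_{\rho_\delta,d}(\mu_E\otimes\sigma)\le\mathsf{c}\,(1+\mathcal E_{\delta_{\log},1}(\sigma))$, and since $\mathcal C^{1}_{\delta_{\log}}(F)$ is bounded above (its energy exceeds $\log(1/\operatorname{diam}(F))$), I conclude $\mathcal C^{d}_{\rho_\delta}(E\times F)\ge\mathsf{c}\,\mathcal C^{1}_{\delta_{\log}}(F)$, which is \eqref{hitting proba when alpha=d}.

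For the upper bounds in i-1) and i-2) I use that AD-regularity of $E$ yields a covering estimate: for every $r\in(0,1]$, $E$ is covered by at most $\mathsf{c}\,r^{-\alpha}$ $\delta$-balls of radius $r$ (take a maximal $r$-separated subset of $E$; the balls of half radius around its points are disjoint with $\mu_E$-mass at least $\mathsf{c}_0^{-1}(r/2)^\alpha$, so there are at most $\mathsf{c}\,r^{-\alpha}$ of them, and the balls of radius $r$ cover $E$ by maximality). Given $\eta>0$ and a cover of $F$ by Euclidean balls $B(x_i,r_i)$ with $r_i\le\eta$ and $\sum_i\varphi_{d-\alpha}(2r_i)$ close to $\mathcal H^{\varphi_{d-\alpha}}_{\operatorname{euc}}(F)$, I cover each copy of $E$ by $\mathsf{c}\,r_i^{-\alpha}$ $\delta$-balls of radius $r_i$; the resulting products $B_\delta(\cdot,r_i)\times B(x_i,r_i)=B_{\rho_\delta}(\cdot,r_i)$ cover $E\times F$ and
$$\sum_i \mathsf{c}\,r_i^{-\alpha}\,\varphi_d(2r_i)=\mathsf{c}\,2^{\alpha}\sum_i (2r_i)^{d-\alpha}\ell^{d}(2r_i)=\mathsf{c}\,2^{\alpha}\sum_i\varphi_{d-\alpha}(2r_i),$$
so $\mathcal H^{\varphi_d}_{\rho_\delta}(E\times F)\le\mathsf{c}\,\mathcal H^{\varphi_{d-\alpha}}_{\operatorname{euc}}(F)$. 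In case i-1), Condition $\mathbf{(C_0)}$ forces $\ell$ to be bounded, so $\varphi_d(x)\asymp x^d$ and $\varphi_{d-\alpha}(x)\asymp x^{d-\alpha}$, giving the power-function upper bound; in case i-2) one keeps the $\ell$-modulation. Combining with \eqref{lwr-uppr bnd hitt C_l} (or, in i-1), directly with \eqref{upper bnd hitting}) finishes the upper bounds.

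The verifications I would leave out are routine: that $\varphi_d$ and $\varphi_{d-\alpha}$ are admissible gauge functions near $0$ (immediate, since $\ell$ is slowly varying and $d,d-\alpha>0$), that the ball-covering outer measures coincide with the general Hausdorff measures up to constants, and that $\delta_{\log}$ is a genuine metric on $[-M,M]^d$ once $M$ is small. The one genuinely delicate point, which also explains the shape of item ii), is the critical case $\alpha=d$: there one cannot factor $\mathcal E_{\rho_\delta,d}(\mu_E\otimes\sigma)$ through $\mathcal E_{\delta,\alpha}(\mu_E)$, since the $\alpha$-energy of an $\alpha$-AD-regular measure is infinite, so the dyadic computation has to be carried out directly on $\rho_\delta$-balls, and the loss of the geometric decay in $2^{k(d-\alpha)}$ is exactly compensated by a single logarithmic factor. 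This is why only a capacity lower bound, measured in the metric $\delta_{\log}$, survives at criticality, with no matching Hausdorff-measure upper bound.
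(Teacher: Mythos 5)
Your proof is correct and follows the same overall reduction as the paper: both arguments start from the sandwich \eqref{lwr-uppr bnd hitt C_l} and reduce everything to the two comparisons $\mathcal{C}_{\rho_{\delta}}^{d}(E\times F)\gtrsim \mathcal{C}_{\operatorname{euc}}^{d-\alpha}(F)$ (resp.\ $\mathcal{C}_{\delta_{\log}}^{1}(F)$ when $\alpha=d$) and $\mathcal{H}_{\rho_{\delta}}^{\varphi_d}(E\times F)\lesssim \mathcal{H}_{\operatorname{euc}}^{\varphi_{d-\alpha}}(F)$. Your Hausdorff-measure upper bound is essentially identical to the paper's: the same packing/covering count $\operatorname{N}_{\delta}(E,r)\leq \mathsf{c}\,r^{-\alpha}$ from the lower half of \eqref{Ahlf-Dav regular cond}, the same product cover, and the same cancellation $r_i^{-\alpha}\varphi_d(2r_i)=2^{\alpha}\varphi_{d-\alpha}(2r_i)$ as in \eqref{covering for E*F}. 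Where you genuinely diverge is the capacity comparison: the paper outsources both the subcritical and the critical case to Proposition 2.5 of \cite{Erraoui Hakiki 2}, whereas you prove it directly by estimating $\mathcal{E}_{\rho_{\delta},d}(\mu_E\otimes\sigma)$ through a dyadic-annulus decomposition in $\rho_{\delta}$, using the upper half of \eqref{Ahlf-Dav regular cond} and then resumming. This buys a self-contained proof and, more importantly, makes transparent \emph{why} the critical case $\alpha=d$ degenerates to a logarithmic kernel: the geometric factor $2^{k(d-\alpha)}$ collapses and the double sum produces exactly one factor of $\log(1/\lVert x-y\rVert)$, which is the content of the external proposition the paper cites. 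The only point to tighten is the absorption of the additive constant in $\mathcal{E}_{\rho_{\delta},d}(\mu_E\otimes\sigma)\leq \mathsf{c}\,(1+\mathcal{E}_{\delta_{\log},1}(\sigma))$: you need $\mathcal{E}_{\delta_{\log},1}(\sigma)$ bounded below by a positive constant uniformly over $\sigma\in\mathcal{P}(F)$, which requires $\operatorname{diam}_{\operatorname{euc}}(F)<1$ (or replacing the kernel by $\log(e/\lVert x-y\rVert)$); this is the same imprecision already present in the paper's definition of $\delta_{\log}$ and you correctly flag it, so it is not a gap in substance.
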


\begin{remark}
    In the case $\alpha=d$, the upper bound in terms of the Hausdorff measure, under either Condition $(\mathbf{C}_{0})$ or Condition $(\mathbf{C}_{\ell})$ with $\lim_{y\rightarrow 0}\ell(y)=+\infty$, is not informative. Indeed, under $(\mathbf{C}_{0})$ the Hausdorff measure is a discrete measure, implying that the upper bound is typically too large to be informative,  and under Condition $(\mathbf{C}_{\ell})$ the Hausdorff measure is infinite for any nonempty set $F$.
\end{remark}
\begin{proof}
Using the bounds in \eqref{lwr-uppr bnd hitt C_l}, to prove (i) it will be sufficient to show that  
\begin{align}\label{bnd cap-haus 1}
    \mathsf{c}_5^{-1}\mathcal{C}_{\operatorname{euc}}^{d-\alpha}(F)\leq \mathcal{C}_{\rho_{\delta}}^{d}(E\times F) \quad \text{ and }\quad \mathcal{H}_{\rho_{\delta}}^{\varphi_d}(E\times F)\leq \mathsf{c}_5\, \mathcal{H}_{\operatorname{euc}}^{\varphi_{d-\alpha}}(F),
\end{align}
respectively. Indeed for the capacities inequality, since $E$ is $\alpha$-Ahlfors-David regular in the metric $\delta$, then by using \cite[Proposition 2.5]{Erraoui Hakiki 2}, with $G_1=E$, $G_2=F$, $\rho_{1}=\delta$, $\rho_{2}=\lVert \cdot \rVert$ and $\rho_3=\rho_{\delta}$, we get the desired inequality. On the other hand, for the Hausdorff measures inequality, we follow the same reasoning of \cite[Proposition 2.1]{Erraoui Hakiki 2}. Assume that $\mathcal{H}_{\operatorname{euc}}^{\varphi_{d-\alpha}}(F)<\infty$ otherwise there is nothing to prove. Let $\zeta>\mathcal{H}_{\operatorname{euc}}^{\varphi_{d-\alpha}}(F)$ be arbitrary. Then there is a covering of $F$ by open balls $B_{\operatorname{euc}}(x_{n},r_n)$ such that 
\begin{align}
F \subset \bigcup_{n=1}^{\infty} B_{\operatorname{euc}}(x_{n},r_{n}) \quad \text { and } \quad \sum_{n=1}^{\infty} (2r_{n})^{d-\alpha}\,\ell^d(2r_{n}) \leq \zeta.\label{covering for F}
\end{align}
Let $\operatorname{N}_{\delta}(E,r)$ be the smallest number of balls in the metric $\delta$ of radius $r$ by which we can cover $E$. For all $n\geq 1$, let $B_{\delta}(t_{n,j},r_n)$, $j=1,...,\operatorname{N}_{\delta}(E,r_n)$ be the family of balls covering  $E$. It follows that the family $B_{{\delta}}(t_{n,j},r_n)\times B_{\operatorname{euc}}(x_{n},r_n),\,j=1,...,\operatorname{N}_{\delta}(E,r_n)$, $n \geq  1$  covers $E\times F$. Let $P_{\delta}(E,r)$ be the greatest number of disjoint balls $B_{\delta}(x_j,r)$ of radius $r>0$ and centers $x_j\in F$. The left inequality of \eqref{Ahlf-Dav regular cond} ensures that 
\begin{equation}\label{estim packing number}
    \mathsf{c}_{0}^{-1}\, P_{\delta}(E,r)\, r^{\alpha}\leq \sum_{j=1}^{P_{\delta}(E,r)}\mu\left(B_{\delta}(t_j,r)\right)=\mu (G_1)\leq 1 \quad \text{for all $r\in (0,1]$}.
\end{equation}
Using the well known fact that
\begin{equation}\label{well knwn fact}
    \operatorname{N}_{\delta}(E,2\,r)\leq P_{\delta}(E,r),
\end{equation}
we obtain that $\operatorname{N}_{\delta}(E,r)\leq 2^{\alpha}\mathsf{c}_0\,r^{-\alpha}$ for all $r\in (0,1]$.
Hence combining this with \eqref{covering for F} we obtain that 		\begin{align}	\mathcal{H}_{\rho_{\delta}}^{\varphi_d}(E\times F)\leq \sum_{n=1}^{\infty} \sum_{j=1}^{\operatorname{N}_{\delta}(E, r_n)}\left(2 r_{n}\right)^{d}\,\ell^d(2 r_{n}) \leq 2^{2\alpha}\,\mathsf{c}_{0}\, \sum_{n=1}^{\infty} (2r_{n})^{d-\alpha}\,\ell^{d}(2 r_{n}) \leq 2^{2\alpha}\,\mathsf{c}_{0}\, \zeta.\label{covering for E*F}
		\end{align}
Letting $\zeta\downarrow\mathcal{H}_{\operatorname{euc}}^{\varphi_{d-\alpha}}(F)$, the desired inequality follows immediately. 

The result in (ii) is a consequence of \cite[Proposition 2.5]{Erraoui Hakiki 2}, we only need to mention that the capacity term $\mathcal{C}_{\delta_{\log}}^{1}(\cdot)$ considered in \eqref{hitting proba when alpha=d} is equivalent to the capacity term $\mathcal{C}^0_{euc}(\cdot)$ considered in \cite{Erraoui Hakiki 2}. Hence the proof is complete.  
\end{proof}

The next proposition states our undecidability claim with precise assumptions. In particular, any $\alpha$-Ahlfors-David-regular compact set $E$ in $X$'s metric leads to the construction of sets in the target space where one cannot decide whether they are reachable from $E$ based solely on their dimensions.  

\begin{proposition}\label{prop critical case}
Let $X$, $a$, $b$ and $M$ be as in Proposition \ref{bnds in trm of E resp. F}. Let $E\subset [0,1]$ be a $\alpha$-Ahlfors-David regular compact set in the metric $\delta$ with $\alpha \in (0,d)$. Then there exist two compact sets $F_1,F_2\subset [-M,M]^d$ such that $\dim_{\rho_{\delta}}(E\times F_1)=\dim_{\rho_{\delta}}(E\times F_2)=d$\,\, and that 
\begin{align}\label{contre exp critical case}
\mathbb{P}\left\{X(E)\cap F_1 \neq \varnothing \right\}=0 \quad \text{ and } \quad \mathbb{P}\left\{X(E)\cap F_2\neq \varnothing \right\}>0.
\end{align}
\end{proposition}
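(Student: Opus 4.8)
The plan is to produce $F_1$ and $F_2$ as explicit compact subsets of a small neighbourhood of $0$ in $\mathbb{R}^d$, and to read off the two opposite conclusions of \eqref{contre exp critical case} directly from Proposition \ref{bnds in trm of E resp. F}(i). Write $s:=d-\alpha\in(0,d)$. First I would dispose of the case distinction in that proposition: since $\gamma$ satisfies $(\mathbf{C}_{\ell})$, either $\lim_{y\to0}\ell(y)<\infty$, in which case $(\mathbf{C}_0)$ holds and part (i-1) applies, or $\lim_{y\to0}\ell(y)=+\infty$ and part (i-2) applies (the case $\ell\to0$ is excluded). In either case there is a constant $\mathsf{c}_3$ such that
\begin{equation*}
\mathsf{c}_3^{-1}\,\mathcal{C}^{s}_{\operatorname{euc}}(F)\ \le\ \mathbb{P}\{X(E)\cap F\neq\varnothing\}\ \le\ \mathsf{c}_3\,\mathcal{H}^{\varphi}_{\operatorname{euc}}(F),\qquad F\subset[-M,M]^d,
\end{equation*}
with $\varphi(x)=x^{s}$ or $\varphi(x)=x^{s}\ell^{d}(x)$. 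The only property of $\varphi$ I would exploit is that $\ell^{d}$ is slowly varying at $0$, so that $c\,x^{s}\le\varphi(x)$ and, for every $\eta>0$, $\varphi(x)\le x^{s-\eta}$ for $x$ small; hence $\mathcal{H}^{\varphi}_{\operatorname{euc}}(F)=0$ whenever $\mathcal{H}^{s-\eta}_{\operatorname{euc}}(F)=0$ for some $\eta>0$, while $\mathcal{C}^{s}_{\operatorname{euc}}$ is the genuine Riesz capacity of order $s$. I would also record that, $E$ being $\alpha$-Ahlfors--David regular in $\delta$, one has $\dim_{\delta}E=\alpha$ and, by the standard product--dimension inequalities (or the product comparisons of \cite{Erraoui Hakiki 2} already used in the proof of Proposition \ref{bnds in trm of E resp. F}), $\dim_{\rho_{\delta}}(E\times F)=\alpha+\dim_{\operatorname{euc}}F$ for every compact $F$ with $\dim_{P}F=\dim_{\operatorname{euc}}F$, a condition the two sets below will satisfy by construction.

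For the polar set I would take $F_1:=\{0\}\cup\bigcup_{k\ge k_0}(v_k+T_k)$, where $k_0>1/s$, $T_k\subset\mathbb{R}^d$ is a self-similar Cantor set (a product of $d$ middle-ratio Cantor subsets of an interval) with $\dim_{\operatorname{euc}}T_k=\dim_{P}T_k=s-1/k$, rescaled so that $\operatorname{diam}T_k<2^{-k-1}M$, and the $v_k$ are pairwise well-separated translations along the first axis with $|v_k|=2^{-k}M\to0$; then $F_1$ is a compact subset of $[-M,M]^d$. Since $\dim_{\operatorname{euc}}F_1=\dim_{P}F_1=\sup_k(s-1/k)=s$, we get $\dim_{\rho_{\delta}}(E\times F_1)=\alpha+s=d$. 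Because $\dim_{\operatorname{euc}}(v_k+T_k)=s-1/k<s-1/(2k)$ forces $\mathcal{H}^{s-1/(2k)}_{\operatorname{euc}}(v_k+T_k)=0$, and $\varphi(x)\le x^{s-1/(2k)}$ near $0$, each piece has vanishing $\varphi$-Hausdorff measure; countable subadditivity of $\mathcal{H}^{\varphi}_{\operatorname{euc}}$ then yields $\mathcal{H}^{\varphi}_{\operatorname{euc}}(F_1)=0$, hence $\mathbb{P}\{X(E)\cap F_1\neq\varnothing\}=0$.

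For the non-polar set I would use a homogeneous Cantor set in $[-M,M]^d$: at stage $n$, inside each surviving cube keep $N_n$ disjoint subcubes of side $\ell_n$, with the parameters tuned so that the number $M_n=N_1\cdots N_n$ of stage-$n$ cubes satisfies $M_n\asymp\ell_n^{-s}\log^{2}(1/\ell_n)$ while $\log(1/\ell_{n+1})\asymp\log(1/\ell_n)$. Calling $F_2$ the resulting compact set and $\mu_2$ its uniform mass distribution, one checks $\mu_2\big(B(x,r)\big)\le C\,r^{s}\log^{-2}(e/r)$ for all $x$ and all small $r$; a dyadic decomposition of the singular integral then makes $\mathcal{E}_{\operatorname{euc},s}(\mu_2)$ finite, so $\mathcal{C}^{s}_{\operatorname{euc}}(F_2)\ge[\mathcal{E}_{\operatorname{euc},s}(\mu_2)]^{-1}>0$ and, by the lower bound above, $\mathbb{P}\{X(E)\cap F_2\neq\varnothing\}>0$. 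The same parameters give $\dim_{\operatorname{euc}}F_2=\dim_{P}F_2=s$ (mass distribution principle for the lower bound on dimension, the Cantor structure for the upper bound), whence $\dim_{\rho_{\delta}}(E\times F_2)=\alpha+s=d$. Taken together, $F_1$ and $F_2$ establish \eqref{contre exp critical case}.

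I expect the two constructions to be the only real obstacle. For $F_1$ the point is that the gauge $\varphi_{d-\alpha}(x)=x^{d-\alpha}\ell^{d}(x)$ is merely a sub-polynomial modulation of $x^{d-\alpha}$, so a countable cluster of Cantor sets of dimensions strictly below $d-\alpha$ still has zero $\varphi_{d-\alpha}$-measure yet attains full dimension $d-\alpha$. The more delicate point is $F_2$: one must thin a critical Cantor set by a $\log^{-2}$-factor so that its natural measure acquires finite Riesz $(d-\alpha)$-energy — hence positive $(d-\alpha)$-capacity — without moving its Hausdorff dimension down, nor its packing (and box) dimension up, away from $d-\alpha$. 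It is precisely this simultaneous control of capacity and of all the fractal dimensions at the single critical exponent $d-\alpha$ that turns the pair $(F_1,F_2)$ into a genuine instance of undecidability.
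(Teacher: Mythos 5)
Your proof is correct, and its skeleton coincides with the paper's: both arguments reduce the claim, via Proposition \ref{bnds in trm of E resp. F} and the product formula $\dim_{\rho_{\delta}}(E\times F)=\dim_{\delta}(E)+\dim_{\operatorname{euc}}(F)$ of Lemma \ref{lem dim product} (which requires only the Ahlfors--David regularity of $E$, so your side condition $\dim_{P}F=\dim_{\operatorname{euc}}F$ is superfluous), to exhibiting $F_1$ of Euclidean dimension $d-\alpha$ with $\mathcal{H}^{\varphi_{d-\alpha}}_{\operatorname{euc}}(F_1)=0$ and $F_2$ of Euclidean dimension $d-\alpha$ with $\mathcal{C}^{d-\alpha}_{\operatorname{euc}}(F_2)>0$. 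Your $F_2$ is essentially the paper's: a Cantor set thinned by a $\log^{-\theta}$ factor with $\theta>1$ whose natural measure satisfies the upper half of \eqref{pseudo Ahlfors condition 2}, followed by the same dyadic-annulus energy estimate. Where you genuinely diverge is $F_1$: the paper builds a single generalized Cantor set (Lemma \ref{lemme48}, via Proposition \ref{EH2}) carrying a measure with the two-sided estimate \eqref{pseudo Ahlfors condition 1} and kills the $\varphi_{d-\alpha}$-Hausdorff measure through a covering-number bound, whereas you take a countable union of ordinary self-similar Cantor sets of dimensions $d-\alpha-1/k\uparrow d-\alpha$ clustering at a point and invoke countable subadditivity of $\mathcal{H}^{\varphi_{d-\alpha}}_{\operatorname{euc}}$ together with the slow variation of $\ell^{d}$, which gives $\varphi_{d-\alpha}(x)\le x^{d-\alpha-1/(2k)}$ near $0$. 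Your route is more elementary for $F_1$ --- it avoids any non-power-gauge Cantor construction on that side --- at the price of a less homogeneous example (an accumulating cluster rather than a single set with exact two-sided measure estimates); the paper's construction treats $F_1$ and $F_2$ symmetrically and yields sets satisfying genuine Ahlfors-type bounds in the modulated gauges. Both arguments are sound.
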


\begin{remark}
 The previous proposition shows that we can construct image sets leading to undecidability for any compact $\alpha$-Ahlfors-regular  set in the domain of $X$ (relative to  $\delta$), when $\alpha \in (0,d)$. But we are also able to construct examples of undecidable image sets with $\alpha=d$.  Indeed, assume $X$ is a fractional Brownian motion (fBm) with Hurst parameter $H$, and assume $Hd=1$. We show here that in the particular case where $E:=I$ is an interval, the critical case $\dim_{\delta}(E)=\frac{1}{H}=d$ is also undecidable. First note that it was proved in \cite{Dal-Muel-Xia 2017} for a fractional Gaussian random field $X$ restricted on $I_1\times \ldots \times I_k$, for some intervals $I_1,\ldots,I_k$, with Hurst parameter $(H_1,...,H_k)$, that $X$ does not visit points in the critical dimension case $d=Q$ where $Q=H_1^{-1}+...+H_k^{-1}$.
Since our domains are one-dimensional, we apply this to the case of fBm itself, i.e. $k=1$. Let then $X=B^H$ be a $d$-dimensional fBm with $Hd=1$. Let $F_1=\{x\}$ for some fixed point $x\in \mathbb{R}^d$; then evidently $\dim_{\operatorname{euc}}(F_1)=0$ and the aforementioned result \cite{Dal-Muel-Xia 2017} implies $\mathbb{P}\left\{ X(E)\cap F_1\neq \varnothing \right\}=0$. 
On the other hand,  one can easily construct a Borel set $F_2\subset [-1,1]^d$ such that its Euclidean dimension $\dim_{\operatorname{euc}}(F_2)=0$ though it has positive logarithmic capacity $\mathcal{C}_{\delta_{\log}}^{1}(F_2)>0$. Then, by using \eqref{hitting proba when alpha=d}, we obtain $\mathbb{P}\left\{ X(E)\cap F_2\neq \varnothing \right\}>0$. Moreover, the intervals are known to be $1/H$-Ahlfors-David regular in the metric $\delta$; therefore Lemma \ref{lem dim product} ensures that $\dim_{\rho_{\delta}}(E\times F_1)=\dim_{\rho_{\delta}}(E\times F_2)=1/H=d$. This prove the aforementioned undecidability. 
\end{remark}
\begin{proof}[Proof of Proposition \ref{prop critical case}]
First, it turns out that since $E$ is $\alpha$-Ahlfors-David regular in the metric $\delta$, we have the following convenient expression for the $\rho_{\delta}$-dimension of $E \times F$:   
$$
\quad \dim_{\rho_{\delta}}(E\times F) = \dim_{\delta}(E)+\dim_{\operatorname{euc}}(F).
$$
This formula is established in Lemma \ref{lem dim product}, which is stated and proved in the next subsection, though this analysis lemma's proof is self-contained and its result can thus be used here. Therefore, by Proposition \ref{bnds in trm of E resp. F}, recalling the notation $\varphi_{d-\alpha}$ introduced in Item (i) therein, to obtain \eqref{contre exp critical case}, it is sufficient to find $F_1,F_2\subset [-M,M]^d$ such that $\dim_{\operatorname{euc}}(F_1)=\dim_{\operatorname{euc}}(F_2)=d-\alpha$ and that 
\begin{align}\label{nul Haus pos Cap}
\mathcal{H}_{\operatorname{euc}}^{\varphi_{d-\alpha}}(F_1)=0 \quad \text{ and }\quad \mathcal{C}_{\operatorname{euc}}^{d-\alpha}(F_2)>0.
\end{align}
To prove this, we claim that it is sufficient to show the following, which is established in the independent Lemma \ref{lemme48} immediately following the proof of this proposition. Let $\theta>1$ be fixed. There exist two probability measures $\mu_1$ and $\mu_2$ supported by two different compact subsets $F_1$ and $F_2$ of $[-M,M]^d$, such that for some positive constants $\mathsf{c}_5$ and $\mathsf{c}_6$ we have
\begin{align}\label{pseudo Ahlfors condition 1}
\mathsf{c}_5^{-1}\, {\varphi_{d-\alpha}(r)}\, \log^{\theta}(e/r)\leq \mu_1\left(B_{\operatorname{euc}}(x,r)\right)\leq \mathsf{c}_5\, { \varphi_{d-\alpha}(r)}\,\log^{\theta}(e/r) \quad \text{for all $r\in (0,1)$, $x\in F_1$},
\end{align}
and 
\begin{align}\label{pseudo Ahlfors condition 2}
\mathsf{c}_6^{-1}\, r^{d-\alpha}\, \log^{-\theta}(e/r)\leq \mu_2\left(B_{\operatorname{euc}}(x,r)\right)\leq \mathsf{c}_6\, r^{d-\alpha}\, \log^{-\theta}(e/r) \quad \text{for all $r\in (0,1)$, $x\in F_2$}.
\end{align}
We begin by proving our claim \eqref{nul Haus pos Cap} for the compacts $F_1$ and $F_2$ mentioned above.
For all $r\in (0,1)$ and $F\subseteq [-M,M]^d$ 
let $\operatorname{N}_{\operatorname{euc}}(F,r)$ be the minimal number of balls $B_{\operatorname{euc}}(x_j,r)$ of radius $r$ required to cover $F$. By using the lower estimate in \eqref{pseudo Ahlfors condition 1} and the same argument used in \eqref{estim packing number} and \eqref{well knwn fact}, in the Euclidean metric this time,
we deduce that 
\begin{align}\label{estim cover nbr}
    \operatorname{N}_{\operatorname{euc}}(F_1,r)\leq \mathsf{c}_7\,  \left(\varphi_{d-\alpha}(r)\right)^{-1}\,\log^{-\theta}(e/r) \quad \text{ for all $r\in (0,1)$}.
\end{align}
Furthermore, using the definition of the $\varphi_{d-\alpha}$-Hausdorff measure as well as \eqref{estim cover nbr} we infer that
\begin{align}\label{zero hausdorff measure}    \mathcal{H}_{euc}^{\varphi_{d-\alpha}}(F_1)\leq \mathsf{c}_8\,\limsup_{r\rightarrow 0} {\varphi_{d-\alpha}(r)}\, \operatorname{N}_{\operatorname{euc}}(F_1,r)=\limsup_{r\rightarrow 0} \,\log^{-\theta}(e/r)=0,
\end{align}
where $\mathsf{c}_8$ is a positive constant. This gives the first outcome of \eqref{nul Haus pos Cap}. Now, we show that $\mathcal{C}_{d-\alpha}(F_2)>0$, where by definition it is sufficient to prove that $\mathcal{E}_{euc,d-\alpha}(\mu_2)<\infty$, with $\mu_2$ being the measue identified in \eqref{pseudo Ahlfors condition 2}. First notice that the upper bound in \eqref{pseudo Ahlfors condition 2} ensures that $\mu_2$ has no atom. Then for all $x\in F_2$ we have 
\begin{align}\label{sum capacity}
    \dint_{F_2}\frac{\mu_2(dy)}{\lVert x-y\rVert^{d-\alpha}}=\sum_{j=0}^{\infty}\dint_{\{y\,:\,\lVert x-y\rVert \in (\kappa 2^{-(j+1)},\kappa 2^{-j}] \}}\frac{\mu_2(dy)}{\lVert x-y\rVert ^{d-\alpha}} &\leq \sum_{j=0}^{\infty} \kappa^{-(d-\alpha)} 2^{(d-\alpha)\,(j+1)}\mu_2\left(B_{\operatorname{euc}}(t,\kappa\,2^{-j})\right)\nonumber\\
    &\leq 2^{d-\alpha}\,\mathsf{c}_9\,\sum_{j=0}^{\infty}\log^{-\theta}(2^j\,e/\kappa), 
\end{align}
where $\kappa:= \operatorname{diam}_{\operatorname{euc}}(F_2)$ and $\mathsf{c}_9$ depends only on $\theta$, $\alpha$, $\kappa$ and $d$. The last sum is finite since $\theta>1$, and does not depend on $x$ so by integrating with respect to the probability measure $\mu_2(dx)$ we get that $\mathcal{E}_{euc,d-\alpha}(\mu_2)<\infty$, which proves the second outcome of \eqref{nul Haus pos Cap}.

In remains to show that $\dim_{\operatorname{euc}}(F_1)=\dim_{\operatorname{euc}}(F_2)=d-\alpha$. First, notice that the same reasoning as in \eqref{estim cover nbr} and \eqref{zero hausdorff measure} will ensures that  
\begin{align}\label{implication of pseudo-ahlfors condition}    
    \mathcal{H}_{\operatorname{euc}}^{\varphi_{1}}(F_1)<\infty \quad \text{ and }\quad \mathcal{H}_{\operatorname{euc}}^{\varphi_{2}}(F_2)<\infty,
\end{align}
where $\varphi_{1}(r):={\varphi_{d-\alpha}(r)}\log^{\theta}(1/r)$ and $\varphi_{2}(r):=r^{d-\alpha}\log^{-\theta}(1/r)$. Since $\ell^d(\cdot)\,\log^{\theta}(1/\cdot)$ and $\log^{-\theta}(1/\cdot)$ are slowly varying functions and $\lim_{r\rightarrow 0}\ell(r)\in (0,+\infty]$, then 
$$
r^{d-\alpha}=o(\varphi_{1}(r))\quad \text{ and }\quad r^{d-\alpha+\varepsilon}=o(\varphi_{2}(r)) \quad \text{  as $r\rightarrow 0$,}
$$
for all $\varepsilon>0$. This fact combined together with \eqref{implication of pseudo-ahlfors condition} imply that $\mathcal{H}_{\operatorname{euc}}^{d-\alpha}(F_1)=\mathcal{H}_{\operatorname{euc}}^{d-\alpha+\varepsilon}(F_2)=0$ for all $\varepsilon>0$. On the other hand, \eqref{sum capacity} ensures that $\mathcal{E}_{\operatorname{euc},d-\alpha}(\mu_2)<\infty$ and then $\mathcal{C}_{euc}^{d-\alpha}(F_2)>0$. Moreover, repeating the same argument as \eqref{sum capacity}, we obtain that $\mathcal{E}_{\operatorname{euc},d-\alpha-\varepsilon}(\mu_1)<\infty$ and then $\mathcal{C}_{euc}^{d-\alpha-\varepsilon}(F_1)>0$ for all $\varepsilon>0$ small enough. 
Hence, combining all the previous facts we infer than 
$$
d-\alpha-\varepsilon \leq \dim_{\operatorname{euc}}(F_1)\leq d-\alpha \leq \dim_{\operatorname{euc}}(F_2)\leq d-\alpha+\varepsilon \quad \text{for all} \quad \varepsilon>0. 
$$
Since $\varepsilon>0$ is arbitrary, we deduce that $\dim_{\operatorname{euc}}(F_1)=\dim_{\operatorname{euc}}(F_2)=d-\alpha$, which finishes the proof. 

The next lemma, whose proof establishes the existence of measures $\mu_1$ and $\mu_2$ satisfying conditions \eqref{pseudo Ahlfors condition 1} and \eqref{pseudo Ahlfors condition 2}, is enough to conclude the  proof of the proposition.   
\end{proof}

\begin{lemma}\label{lemme48}Let $\alpha\in (0,d)$ and $\theta>1$, then there exist two compact subsets $F_1$ and $F_2$ of $[-M,M]^d$ which respectively, support two probability measures $\mu_1$ and $\mu_2$ satisfying \eqref{pseudo Ahlfors condition 1} and \eqref{pseudo Ahlfors condition 2}.
\end{lemma}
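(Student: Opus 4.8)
The plan is to deduce both halves of the lemma from a single construction. For a gauge function of the form $g(r)=r^{\kappa}L(r)$ with $\kappa=d-\alpha\in(0,d)$ and $L$ slowly varying at $0$ (and $g$ continuous, strictly increasing on some $(0,s_0]$, $g(0+)=0$), I would build a compact $F\subset[-M,M]^d$ carrying a probability measure $\mu$ with
$$\mathsf{c}^{-1}\,g(r)\le \mu\big(B_{\operatorname{euc}}(x,r)\big)\le \mathsf{c}\,g(r)\qquad\text{for all }x\in F,\ 0<r<1,$$
the constant $\mathsf{c}$ depending only on $d$, $\kappa$ and $g$. Applying this with $g=g_1$, where $g_1(r):=\varphi_{d-\alpha}(r)\log^{\theta}(e/r)=r^{d-\alpha}\ell^{d}(r)\log^{\theta}(e/r)$, yields $(F_1,\mu_1)$ obeying \eqref{pseudo Ahlfors condition 1}, and with $g=g_2$, where $g_2(r):=r^{d-\alpha}\log^{-\theta}(e/r)$, yields $(F_2,\mu_2)$ obeying \eqref{pseudo Ahlfors condition 2}. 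Since $g_1/g_2\to\infty$ the two sets are genuinely different (and $F_2$ may be translated so that $F_1\cap F_2=\varnothing$). It is routine that $g_1,g_2$ are admissible: each is $r^{d-\alpha}$ times a slowly varying factor ($\ell^d\log^\theta$, resp. $\log^{-\theta}$), hence regularly varying of positive index, tending to $0$; after replacing $\ell$ by an asymptotically equivalent smooth normalized slowly varying function as in \eqref{nice rep slow var}, each $g_i$ is continuous and strictly increasing on a sufficiently small $(0,s_0]$, so $g_i^{-1}$ is well defined there.

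The construction of $F$ is a homogeneous Cantor set. Fix a large integer $p$ (depending only on $d,\kappa$), a starting cube $Q_0\subset[-M,M]^d$ of small side $s_0$, and define the side lengths by the \emph{exact} recursion $g(s_n)=p^{-d}\,g(s_{n-1})$, i.e. $s_n=g^{-1}(p^{-d}g(s_{n-1}))$; then $s_n\downarrow 0$. Inductively, inside each level-$(n-1)$ cube of side $s_{n-1}$ place $p^{d}$ pairwise disjoint closed level-$n$ cubes of side $s_n$, centred on a regular $p\times\cdots\times p$ subgrid of spacing $s_{n-1}/p$. This is feasible because $s_{n-1}/s_n$ is large: from $g(s_n)=p^{-d}g(s_{n-1})$, $g(r)=r^{\kappa}L(r)$, and Karamata's uniform convergence theorem for $L$ one gets $(s_{n-1}/s_n)^{\kappa}=p^{d}\,L(s_n)/L(s_{n-1})\to p^{d}$, so $s_{n-1}/s_n\asymp p^{d/\kappa}$, which exceeds $3p$ once $p$ is large since $d/\kappa=d/(d-\alpha)>1$ (the finitely many initial levels where a bound might fail are handled by starting the recursion at a large index). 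Set $F=\bigcap_n\bigcup(\text{level-}n\text{ cubes})$, $N_n=p^{dn}$, and let $\mu$ assign mass $N_n^{-1}=g(s_n)/g(s_0)$ uniformly to each level-$n$ cube; using the exact identity for $s_n$ is precisely what keeps $\mu(\text{level-}n\text{ cube})$ comparable to $g(s_n)$ with a constant that does not accumulate over levels.

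For the ball estimate, fix $x\in F$ and $0<r<1$. If $r\ge s_0$ then $\mu(B_{\operatorname{euc}}(x,r))=1\asymp g(r)$ on the compact range $[s_0,1]$ (where $g$ is continuous and positive), so assume $s_n\le r<s_{n-1}$. The lower bound is immediate: $B_{\operatorname{euc}}(x,r)$ contains the level-$n$ cube through $x$, so $\mu(B_{\operatorname{euc}}(x,r))\ge N_n^{-1}=g(s_n)/g(s_0)\ge p^{-d}g(r)/g(s_0)$ by monotonicity of $g$ and $r<s_{n-1}$. For the upper bound, the gap between distinct level-$(n-1)$ cubes (within any common ancestor) is of order $s_{n-2}/p\asymp p^{\,d/\kappa-1}s_{n-1}\gg s_{n-1}>r$, so $B_{\operatorname{euc}}(x,r)\cap F$ lies in the single level-$(n-1)$ cube $Q$ containing $x$; inside $Q$ the level-$n$ cubes sit on a grid of spacing $s_{n-1}/p$, and a ball of radius $r<s_{n-1}=p\cdot(s_{n-1}/p)$ meets at most $(p+2)^{d}$ of them, whence $\mu(B_{\operatorname{euc}}(x,r))\le (p+2)^{d}g(s_n)/g(s_0)\le (p+2)^{d}g(r)/g(s_0)$. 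This gives $\mu(B_{\operatorname{euc}}(x,r))\asymp g(r)$ with uniform constants, completing the construction and hence the lemma.

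The main obstacle — and the point where $\alpha\in(0,d)$ is genuinely used — is reconciling two competing scaling demands at each level: one needs $p^{d}$ well-separated sub-cubes to \emph{fit} inside the parent, forcing $s_{n-1}/s_n\gtrsim p$, whereas the identity $g(s_n)=p^{-d}g(s_{n-1})$ with $g\asymp r^{\kappa}L(r)$ only \emph{delivers} $s_{n-1}/s_n\asymp p^{d/\kappa}$. The construction succeeds only because $d/\kappa>1$, i.e. $\alpha>0$; with $\alpha=0$ one could not embed a faster-than-power slowly varying modulation of the $d$-dimensional gauge into $\mathbb{R}^d$ this way. Secondary care is needed to keep the comparison constants uniform in $n$ (handled by the exact recursion for $s_n$) and to run the separation bookkeeping in the upper ball bound, which rests on the uniform convergence of $L$ on compact ratio-intervals.
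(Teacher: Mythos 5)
Your construction is correct, but it follows a genuinely different route from the paper. The paper reduces the problem to one dimension: it takes the $d$-th roots $\psi_1(r)=r^{1-\alpha/d}\ell^{1/d}(r)\log^{\theta/d}(e/r)$ and $\psi_2(r)=r^{1-\alpha/d}\log^{-\theta/d}(e/r)$ of the target gauges, checks the doubling condition $\psi_i(2x)<2\psi_i(x)$ (which is where $\alpha>0$ enters there, since the regular-variation index $1-\alpha/d$ must be $<1$), invokes the one-dimensional Cantor-type construction of Proposition \ref{EH2} to get measures $\mu_{0,i}$ on compacts $F_{0,i}\subset[0,1]$ with $\mu_{0,i}([x-r,x+r])\asymp\psi_i(r)$, and then simply takes the $d$-fold product measure on $F_i=F_{0,i}^{\,d}$ and passes from cubes to Euclidean balls by norm equivalence. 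You instead build the set directly in $\mathbb{R}^d$ as a homogeneous Cantor set with $p^d$ children per cube and side lengths pinned by the exact recursion $g(s_n)=p^{-d}g(s_{n-1})$, and you verify the two-sided ball estimate by hand; the role of $\alpha\in(0,d)$ appears as the packing inequality $s_{n-1}/s_n\asymp p^{d/(d-\alpha)}\gg p$. The paper's route is shorter because the hard combinatorial work is outsourced to a cited result; yours is self-contained, makes the use of $\alpha>0$ more transparent, and the exact mass recursion elegantly prevents constants from accumulating across generations. Two small points in your write-up deserve a remark but are only constant-chasing, not gaps: (a) a Euclidean ball of radius $r\ge s_n$ centred at $x$ need not contain the whole level-$n$ cube through $x$ unless $r\ge\sqrt{d}\,s_n$, so the lower bound should either descend one extra generation or be run with $\ell^\infty$-balls and converted at the end, exactly as the paper does; (b) the asymptotic $s_{n-1}/s_n\to p^{d/(d-\alpha)}$ is cleanest obtained from the regular variation of $g^{-1}$ with index $1/(d-\alpha)$, which avoids the mild circularity in invoking the uniform convergence theorem before knowing the ratios stay in a compact set — you already flag that the finitely many initial generations are discarded, which closes this.
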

\noindent Before proving this lemma, we give the following key result for constructing the measures in \eqref{pseudo Ahlfors condition 1} and \eqref{pseudo Ahlfors condition 2}, which appears as Proposition 7.4 in \cite{Erraoui Hakiki 2}. The proof of that proposition comes from the procedure for constructing the classical Cantor set and its associated singular continuous distribution function, which is then adapted to a scale that might involve a regularly/slowly varying function in general rather than a power function.
\begin{proposition}\label{EH2}(Appendix B Proposition 7.4 in \cite{Erraoui Hakiki 2})
Let $\psi$ be a function satisfying
\begin{equation}\label{concav condition}
\psi(0)=0\quad \text{and} \quad \psi(2x)< 2\psi(x)\quad \text{for all $x\in (0,x_0)$},
\end{equation}
for some $x_0\in (0,1)$. Then there exists a Borel set $G \subset [0,1]$ which support a probability measure $\nu$ such that 
\begin{equation}
	\mathbf{c}_0^{-1}\,\psi(r)\leq \nu([a-r,a+r])\leq \mathbf{c}_0\,\psi(r) \quad \text{for all $r\in [0,x_0]$ and\, $a\in G$}.
\end{equation}
\end{proposition}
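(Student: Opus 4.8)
The plan is to realize $G$ and $\nu$ by a Cantor-type construction whose generation lengths are calibrated to $\psi$. As is standard for gauge functions (and consistent with the hypotheses placed on $\varphi$ in Section~2), I treat $\psi$ as continuous and strictly increasing near $0$ with $\lim_{0+}\psi=0$; if $\psi$ is only increasing and right-continuous one replaces $\psi^{-1}$ below by its generalized inverse and all the inequalities go through verbatim. Set $c:=\psi(x_0)$ and define the generation lengths $\ell_n:=\psi^{-1}\!\left(c\,2^{-n}\right)$ for $n\geq 0$, so that $\ell_0=x_0$, $\ell_n\downarrow 0$, and $\psi(\ell_n)=c\,2^{-n}$. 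The first step is to observe that the hypothesis \eqref{concav condition} is precisely the feasibility condition for the construction: applying $\psi(2x)<2\psi(x)$ with $x=\ell_n/2$ gives $\psi(\ell_n)<2\psi(\ell_n/2)$, whence $\psi(\ell_{n+1})=\tfrac12\psi(\ell_n)<\psi(\ell_n/2)$ and therefore $2\ell_{n+1}<\ell_n$ for every $n$.

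Next I would build the usual nested family. Starting from $I_0:=[0,x_0]$, inside each generation-$n$ interval (of length $\ell_n$) I place two disjoint closed subintervals of length $\ell_{n+1}$ flush against its two endpoints; since $2\ell_{n+1}<\ell_n$, these leave a central gap $\ell_n-2\ell_{n+1}>0$. Let $G_n$ be the union of the $2^n$ generation-$n$ intervals, and $G:=\bigcap_{n\geq 0}G_n$, a nonempty compact subset of $[0,1]$. The measure $\nu$ is the associated Cantor measure: assign mass $2^{-n}$ to each generation-$n$ interval, a prescription that is consistent under the splitting and defines, by the Kolmogorov/Carath\'eodory extension (equivalently, as the weak limit of the normalized restrictions of Lebesgue measure to the $G_n$), a Borel probability measure supported on $G$ and having no atoms.

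It then remains to prove the two-sided bound. Fix $a\in G$ and $r\in(0,x_0]$, and let $n$ be the unique integer with $\ell_{n+1}\leq r<\ell_n$. For the lower bound, the generation-$(n+1)$ interval containing $a$ is contained in $[a-\ell_{n+1},a+\ell_{n+1}]\subseteq[a-r,a+r]$, so $\nu([a-r,a+r])\geq 2^{-(n+1)}\geq (2c)^{-1}\psi(r)$, using $\psi(r)\leq\psi(\ell_n)=c\,2^{-n}$. For the upper bound, the window $[a-r,a+r]$ has length $2r<2\ell_n$; since the generation-$n$ intervals are pairwise disjoint with left endpoints at mutual distance at least $\ell_n$, at most three of them can meet the window, and each carries mass $2^{-n}$, so $\nu([a-r,a+r])\leq 3\cdot 2^{-n}=3c^{-1}\psi(\ell_n)\leq 6c^{-1}\psi(r)$, where the last step uses $\psi(\ell_n)=2\psi(\ell_{n+1})\leq 2\psi(r)$. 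Taking $\mathbf{c}_0:=\max\{2c,\,6/c\}$ gives the stated estimate for $r\in(0,x_0]$, while $r=0$ is the non-atomicity of $\nu$.

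The step I expect to require the most care is the upper bound, specifically the point that the doubling hypothesis \eqref{concav condition} alone---with no quantitative control on the size of the gaps relative to the lengths---already suffices. This works because I bound $\nu([a-r,a+r])$ by counting the boundedly many generation-$n$ intervals the window can intersect, each contributing at most its full mass $2^{-n}$, rather than attempting to describe the mass distribution inside a single generation interval at the finer scale $r$. A secondary nuisance is the regularity of $\psi$ (strict monotonicity, so that $\psi^{-1}$ and the sandwiching $\ell_{n+1}\leq r<\ell_n$ make sense), which is dispatched by the generalized-inverse remark above.
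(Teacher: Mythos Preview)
Your proof is correct and follows precisely the approach the paper indicates: the paper does not prove the proposition itself but cites \cite{Erraoui Hakiki 2}, noting that the argument is the classical Cantor construction with generation lengths calibrated to the gauge $\psi$. Your calibration $\ell_n=\psi^{-1}(c\,2^{-n})$, the feasibility check $2\ell_{n+1}<\ell_n$ from \eqref{concav condition}, the mass-distribution definition of $\nu$, and the two-sided ball estimate via the sandwich $\ell_{n+1}\le r<\ell_n$ are exactly the ingredients of that construction, and your counting argument for the upper bound (at most three disjoint length-$\ell_n$ intervals meeting a window of length $<2\ell_n$) is clean and correct.
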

\begin{proof}[Proof of Lemma \ref{lemme48}]
First, let us define the functions $\psi_1(r):=r^{1-\alpha/d}\ell^{1/d}(r)\log^{\theta/d}(e/r)$ and $\psi_2(r):=r^{1-\alpha/d}\log^{-\theta/d}(e/r)$. Since $\alpha<d$, one readily checks that $\psi_i$ for $i=1,2$\, are continuous increasing functions on $(0,1)$ such that \eqref{concav condition} is satisfied.
Therefore, using Proposition \ref{EH2}, there exist two Borel probability measures $\mu_{0,1}$ and $\mu_{0,2}$ supported by two compact subsets $F_{0,1}$ and $F_{0,2}$ of $[0,1]$, respectively, and two positive constants  $\mathbf{c}_1,\mathbf{c}_2$ such that for $i=1,2$ we have
\begin{align}\label{pseudo Ahlf-Dav one dim i}
\mathbf{c}_i^{-1}\,\psi_i(r)\leq \mu_{0,i}([x-r,x+r])\leq \mathbf{c}_i\,\psi_i(r) \quad \text{for all $r\in (0,r_0)$ and\, $x\in F_{0,i}$},
\end{align}
for some $r_0\in (0,1)$.
Now, let $\mu_{i}:=\underset{j=1}{\overset{d}{\otimes}}\mu_{0,i}$ and $F_i:=\underset{j=1}{\overset{d}{\times}}F_{0,i}$  for $i=1,2$. Then using \eqref{pseudo Ahlf-Dav one dim i} and the definition of the measure $\mu_i$, we obtain that 
\begin{equation}\label{pseudo Ahlf-Dav one dim i-d}
    \mathbf{c}_i^{-d}\,\psi_i^d(r)\leq \mu_{i}\left(\overset{d}{\underset{j=1}{\prod}}[x_j-r,x_j+r]\right)\leq \mathbf{c}^d_i\,\psi_i^d(r) \quad \text{for all $r\in (0,1)$ and\, $\left(x_1,\ldots,x_d\right)\in F_{i}$ },
\end{equation}
for $i=1,2$. The fact that the Euclidean norm $\lVert . \rVert_2$ and the maximum norm $\lVert . \rVert_{\infty}$ are equivalent ensures that  \,\eqref{pseudo Ahlfors condition 1} and \eqref{pseudo Ahlfors condition 2} follow with $\mathsf{c}_5$  depending on the constants $\mathbf{c}_1$, $\theta$, $\alpha$, $d$ and $\ell$, and the constant $\mathsf{c}_6$ depending the constants $\mathbf{c}_2$, $\theta$, $\alpha$ and $d$. Hence the proof is complete. 

\end{proof}

\subsection{Co-dimension of the image set $X(E)$}

In this final subsection we consider the so-called stochastic codimension of our image sets. For a random Borel set $K\subset \R^d$ the upper and lower stochastic codimensions of $K$ are defined as follows:
\begin{align}\label{def lwr codim}
\underline{\operatorname{codim}}(K):=\sup\left\{\beta \leq d\,:\, \text{ for all } F\subset \R^d \text{ s.t. } \dim_{euc}(F)<\beta \text{ we have } \mathbb{P}\{K \cap F\neq \varnothing \}=0\right\},
\end{align}
and 
\begin{align}\label{def uppr codim}    \overline{\operatorname{codim}}(K):=\inf\left\{\beta \leq d\,:\, \text{ for all } F\subset \R^d \text{ s.t. } \dim_{euc}(F)>\beta \text{ we have } \mathbb{P}\{K \cap F\neq \varnothing \}>0\right\}.
\end{align}
The above definitions can be found in \cite{Khosh}. Moreover, \cite[Lemma 4.7.1 p. 435]{Khosh} provides the following summary
\begin{equation}\label{summary codim}
\mathbb{P}({K} \cap F \neq \varnothing) \begin{cases}>0, & \text { whenever } \operatorname{dim}_{euc}(F)>\overline{\operatorname{codim}}({K}) \\ =0, & \text { whenever } \operatorname{dim}_{euc}(F)<\underline{\operatorname{codim}}({K})\end{cases}.
\end{equation}
It is worth noting that the upper and lower stochastic codimension of $K$ are not random, even if $K$ is a random set. Notice that $ \underline{\operatorname{codim}}(K)\leq \overline{\operatorname{codim}}(K)$ for all $K$.  Moreover, in the case when $\underline{\operatorname{codim}}(K)=\overline{\operatorname{codim}}(K)$, we write $\operatorname{codim}(K)$ for the common value and call it the stochastic codimension of $K$. 

Let $(Y,\rho)$ be a metric space. We recall that, the upper Minkowski dimension of a Borel set $G\subset Y$, in the metric $\rho$, is defined as
\begin{equation}
	\overline{\dim}_{\rho,M}(G)=\inf\{\alpha : \exists\,\, \mathsf{c}(\alpha)>0\, \text{ such that } N_{\rho}(G,r)\leqslant \mathsf{c}(\alpha)\,r^{-\alpha} \text{ for all } r>0 \}.\label{upper Minkowski}
	\end{equation}
 where $N_{\rho}(G,r)$ is the smallest number of balls of radius $r$ in the metric $\rho$ needed to cover $G$. The following lemma, which shows how Minkowski dimension can be helpful in estimating Hausdorff dimensions, will be useful for the rest of this section, particularly in establishing our formula for the dimension of the cartesian product of two Borel sets, where at least one is Ahlfors-David-regular.
\begin{lemma}\label{lem dim product} 
Let $E\subset [a,b]$ and $F\subset [-M,M]^d$ be two bounded Borel sets. Then we have
\begin{equation}\label{compar dim produit}
\begin{aligned}
 \dim_{\delta}(E)+\dim_{\operatorname{euc}}(F)&\leq \dim_{\rho_{\delta}}(E\times F)\\
 &\leq \left(\overline{\dim}_{\delta,M}(E)+\dim_{\operatorname{euc}}(F)\right)\wedge \left(\dim_{\delta}(E)+\overline{\dim}_{\operatorname{euc},M}(F)\right).
\end{aligned} 
\end{equation}
Moreover, if $E$  (resp. $F$) is Ahlfors-David regular, in the metric  $\delta$ (resp. the Euclidean metric), then 
\begin{equation}\label{effet de Ahlf-Dav}
 \dim_{\delta}(E)=\overline{\dim}_{\delta,M}(E)\quad (\text{resp. }\, \dim_{\operatorname{euc}}(F)=\overline{\dim}_{\operatorname{euc},M}(F)).
\end{equation}
In that case, i.e. when one of $E$ or $F$ is Ahlfors-David regular in its associated metric, we have
\begin{equation}\label{dim prod=sum dim}
\dim_{\rho_{\delta}}(E\times F)= \dim_{\delta}(E)+\dim_{\operatorname{euc}}(F).
\end{equation}
\end{lemma}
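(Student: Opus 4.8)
The plan is to establish the three assertions in order: the chain \eqref{compar dim produit} via an energy computation for the lower bound and a mixed Minkowski-counting/Hausdorff-covering argument for the upper bound; then \eqref{effet de Ahlf-Dav} by combining the always-valid inequality $\dim_\delta(E)\le\overline{\dim}_{\delta,M}(E)$ with packing and Frostman estimates read off the Ahlfors--David measure; and finally \eqref{dim prod=sum dim} as an immediate consequence. For the \emph{lower bound} in \eqref{compar dim produit}, fix $s<\dim_\delta(E)$ and $t<\dim_{\operatorname{euc}}(F)$. By \eqref{altern dim delta} and the energy method in Euclidean space there are $\nu_1\in\mathcal{P}(E)$ and $\nu_2\in\mathcal{P}(F)$ with $\mathcal{E}_{\delta,s}(\nu_1)<\infty$ and $\mathcal{E}_{\operatorname{euc},t}(\nu_2)<\infty$. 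Since $\max\{a,b\}\ge a^{\theta}b^{1-\theta}$ for $\theta\in[0,1]$, taking $\theta=s/(s+t)$ gives $\rho_\delta((u,x),(v,y))^{s+t}\ge\delta(u,v)^{s}\,\|x-y\|^{t}$, hence $\mathcal{E}_{\rho_\delta,s+t}(\nu_1\otimes\nu_2)\le\mathcal{E}_{\delta,s}(\nu_1)\,\mathcal{E}_{\operatorname{euc},t}(\nu_2)<\infty$. Thus $\mathcal{C}_{\rho_\delta}^{s+t}(E\times F)>0$, and \eqref{altern dim rho_delta} yields $\dim_{\rho_\delta}(E\times F)\ge s+t$; letting $s\uparrow\dim_\delta(E)$ and $t\uparrow\dim_{\operatorname{euc}}(F)$ completes this part.

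For the \emph{upper bound} we may assume the right-hand dimensions are finite. Fix $\alpha>\overline{\dim}_{\delta,M}(E)$ and $\beta>\dim_{\operatorname{euc}}(F)$, so that $N_\delta(E,r)\le \mathsf{c}\,r^{-\alpha}$ for all $r>0$ for some $\mathsf{c}$, and, since $\mathcal{H}_{\operatorname{euc}}^{\beta}(F)=0$, for every $\eta>0$ there is a ball cover $\{B_{\operatorname{euc}}(x_n,r_n)\}_{n\ge1}$ of $F$ with $\sum_n(2r_n)^{\beta}\le\eta$. Covering $E$, for each $n$, by $N_\delta(E,r_n)$ balls $B_\delta(\cdot,r_n)$, the Cartesian products $B_\delta(\cdot,r_n)\times B_{\operatorname{euc}}(x_n,r_n)$ are precisely $\rho_\delta$-balls of radius $r_n$ (by the definition \eqref{parabolic metric} of $\rho_\delta$ as a maximum) and they cover $E\times F$, with $\sum_n N_\delta(E,r_n)\,(2r_n)^{\alpha+\beta}\le \mathsf{c}\,2^{\alpha}\sum_n(2r_n)^{\beta}\le \mathsf{c}\,2^{\alpha}\,\eta$. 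Letting $\eta\to0$ gives $\mathcal{H}_{\rho_\delta}^{\alpha+\beta}(E\times F)=0$, hence $\dim_{\rho_\delta}(E\times F)\le\alpha+\beta$; sending $\alpha\downarrow\overline{\dim}_{\delta,M}(E)$ and $\beta\downarrow\dim_{\operatorname{euc}}(F)$ gives the first term in the upper bound, and interchanging the roles of the two factors (counting Euclidean balls covering $F$ and Hausdorff-covering $E$) gives the second.

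For \eqref{effet de Ahlf-Dav}, one always has $\dim_\delta(E)\le\overline{\dim}_{\delta,M}(E)$: if $\alpha>\overline{\dim}_{\delta,M}(E)$, covering $E$ by $N_\delta(E,r)\le\mathsf{c}\,r^{-\alpha}$ balls of radius $r$ shows $\mathcal{H}_\delta^{\alpha+\varepsilon}(E)=0$ for every $\varepsilon>0$. Now suppose $E$ is $\alpha$-Ahlfors--David regular in $\delta$ with measure $\mu$ as in \eqref{Ahlf-Dav regular cond}. The lower inequality there gives, exactly as in \eqref{estim packing number}--\eqref{well knwn fact}, $N_\delta(E,2r)\le P_\delta(E,r)\le \mathsf{c}_0\,r^{-\alpha}$, so $\overline{\dim}_{\delta,M}(E)\le\alpha$; the upper inequality $\mu(B_\delta(a,r))\le\mathsf{c}_0\,r^{\alpha}$, together with the Frostman-type characterization \eqref{dim Frost} (or directly the energy decomposition \eqref{decomp energy}), gives $\dim_\delta(E)\ge\alpha$. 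Chaining the three inequalities forces $\dim_\delta(E)=\overline{\dim}_{\delta,M}(E)=\alpha$; the statement for $F$ Ahlfors--David regular in the Euclidean metric is identical. Finally, \eqref{dim prod=sum dim} follows: if $E$ is AD-regular, the first term of the upper bound in \eqref{compar dim produit} equals $\dim_\delta(E)+\dim_{\operatorname{euc}}(F)$, i.e.\ the lower bound; if instead $F$ is AD-regular, the second term does; either way the two sides of \eqref{compar dim produit} coincide.

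No step here is deep; the only point demanding care is the upper bound in \eqref{compar dim produit}, where one must note that the product of a $\delta$-ball and a Euclidean ball of equal radius is exactly a $\rho_\delta$-ball of that radius, and recognize that the factor covered by counting must be measured by its \emph{upper Minkowski} dimension rather than its Hausdorff dimension. This is why the general bound is asymmetric and why a clean product formula only emerges once Ahlfors--David regularity collapses the two notions; one should also keep in mind that it is the Frostman reformulation \eqref{dim Frost} of $\dim_\delta$ that lets the Ahlfors--David measure deliver the matching lower bound $\dim_\delta(E)\ge\alpha$.
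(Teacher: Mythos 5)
Your proof is correct. The upper bound in \eqref{compar dim produit} is essentially the paper's argument: cover the factor with finite upper Minkowski dimension by $N(\cdot,r_n)$ balls of the same radii $r_n$ used in an efficient Hausdorff cover of the other factor, and observe that $B_\delta(t,r)\times B_{\operatorname{euc}}(x,r)=B_{\rho_\delta}((t,x),r)$ because $\rho_\delta$ is a max metric; the paper merely outsources this computation to Proposition 2.1-ii) of the Erraoui--Hakiki reference, whereas you write it out (and your bookkeeping $N_\delta(E,r_n)(2r_n)^{\alpha+\beta}\le \mathsf{c}\,2^{\alpha}(2r_n)^{\beta}$ is right). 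Where you genuinely diverge is the lower bound: the paper takes a Frostman measure $\nu$ on $E$ satisfying $\nu(B_\delta(t,r))\le \mathsf{c}\,r^{\alpha}$, pairs it with an energy-finite measure on $F$, and cites an external kernel estimate to bound $\mathcal{E}_{\rho_\delta,\alpha+\beta}(\nu\otimes\mu)$ by $\mathcal{E}_{\operatorname{euc},\beta}(\mu)$; you instead take energy-finite measures on \emph{both} factors and use the elementary pointwise inequality $\max\{a,b\}^{s+t}\ge a^{s}b^{t}$ to factor the product energy as $\mathcal{E}_{\rho_\delta,s+t}(\nu_1\otimes\nu_2)\le\mathcal{E}_{\delta,s}(\nu_1)\,\mathcal{E}_{\operatorname{euc},t}(\nu_2)$ via Fubini. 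Your route is more symmetric and self-contained (no appeal to an external proposition or to a ball-growth condition on either factor), at the cost of being specific to the max-metric structure; the paper's version, by contrast, is the one that generalizes to the non-product kernel estimates it needs elsewhere (e.g.\ in Proposition \ref{bnds in trm of E resp. F}). Your direct derivation of \eqref{effet de Ahlf-Dav} from the two halves of the Ahlfors--David condition (packing count from the lower bound, Frostman condition from the upper bound) is exactly the Euclidean argument in Mattila that the paper cites, transplanted to the metric $\delta$, and is fine. The only cosmetic caveat is that for the lower bound you should note the degenerate case $\dim_{\operatorname{euc}}(F)=0$ (or $\dim_\delta(E)=0$), which your inequality handles with $t=0$ (or $s=0$) or, as the paper does, by projection monotonicity.
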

\begin{proof}
We start by proving the upper bound in \eqref{compar dim produit}. Let us assume that $\dim_{\delta}(E)>0$ and $\dim_{\operatorname{euc}}(F)>0$ otherwise when one of these dimensions is equal to zero, the result can be readily deduced from the property that the Hausdorff dimension does not increase under projection. Let $\alpha\in (0,\dim_{\delta}(E))$  and $\beta \in (0,\dim_{\operatorname{euc}}(F))$; then $\mathcal{C}_{\operatorname{euc}}^{\beta}(E)>0$ and by Frostman's theorem there is a probability measures $\nu$ supported on $E$ such that 
$$ 
\nu(B_{\delta}(t,r))\leq \mathsf{c}_5\,r^{\alpha}\, \text{ for all $t\in [a,b]$ and $r\in (0,1)$}.
$$
Now, using \cite[Proposition 2.1-i)]{Erraoui Hakiki 2} we have $\mathcal{C}_{\rho_{\delta}}^{\alpha+\beta}(E\times F)\geq \mathsf{c}_6\, \mathcal{C}_{\operatorname{euc}}^{\beta}(F)>0$. 
Hence $\dim_{\rho_{\delta}}(E\times F)\geq \alpha+\beta$. Letting $\alpha \uparrow \dim_{\delta}(E)$ and $\beta \uparrow \dim_{\operatorname{euc}}(F)$, the lower inequality in \eqref{compar dim produit} follows. 

For the upper bound, let $\alpha>\overline{\dim}_{\delta,M}(E)$ and $\beta>\dim_{\operatorname{euc}}(F)$, then $\mathcal{H}_{\operatorname{euc}}^{\beta}(F)=0$ and 
\begin{equation}
    \operatorname{N}_{\delta}(E,r)\leq \mathsf{c}_7\,r^{-\alpha} \quad \text{ for all $r>0$}.
\end{equation}
By using \cite[Proposition 2.1-ii)]{Erraoui Hakiki 2} we obtain
$\mathcal{H}_{\rho_{\delta}}^{\alpha+\beta}(E\times F)\leq \mathsf{c}_8\, \mathcal{H}_{\operatorname{euc}}^{\beta}(F)=0$. Hence $\dim_{\rho_{\delta}}(E\times F)\leq \alpha+\beta$. Letting $\alpha \downarrow \overline{\dim}_{\delta,M}(E)$ and $\beta \downarrow \dim_{\operatorname{euc}}(F)$, the first term of the upper inequality in \eqref{compar dim produit} follows. The second term follows in the same way. For the statement \eqref{effet de Ahlf-Dav} it suffices to go through the same lines of the proof of the Euclidean case, which is shown in \cite[Theorem 5.7 p. $80$]{Mattila}.
The last statement of the lemma follows immediately from its first two statements. 
\end{proof}

We are ready to state and easily prove a formula for the stochatic codimension of our processes' image sets.

\begin{corollary}\label{codimcor} Let $X$ be a $d$-dimensional Gaussian process verifying the commensurability condition $\mathbf{(\Gamma)}$ such that its standard deviation function $\gamma$ satisfies the concavity Hypothesis \eqref{Hyp2} and the mild regularity condition \textbf{$\mathbf{(C_{0+})}$}. Let $E \subset [0,1]$ be a Borel set such that $\dim_{\delta}(E)=\overline{\dim}_{\delta,M}(E)$. Then we have 
\begin{align}\label{cor on codim img}
\operatorname{codim}\left(X(E)\right)=\left(d-\dim_{\delta}(E)\right)\vee 0.    
\end{align}    
\end{corollary}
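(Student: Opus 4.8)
The plan is to apply the general hitting-probability machinery of Section 4.1 together with the product-dimension formula of Lemma \ref{lem dim product}. The two stochastic codimensions are sandwiched using the two implications in \eqref{summary codim}, so it suffices to show that if $\dim_{\operatorname{euc}}(F) > (d-\dim_{\delta}(E))\vee 0$ then $\mathbb{P}\{X(E)\cap F\neq\varnothing\}>0$, and that if $\dim_{\operatorname{euc}}(F) < (d-\dim_{\delta}(E))\vee 0$ then $\mathbb{P}\{X(E)\cap F\neq\varnothing\}=0$. Both directions will come from Theorem \ref{Hitting proba}, after a reduction of $E$ and $F$ to bounded sets: since $X$ has a.s. continuous paths on $[0,1]$ and $E\subset[0,1]$, the image $X(E)$ is a.s. bounded, so only a bounded piece of $F$ matters; and we may replace $E$ by $E\cap[a,b]$ for $0<a<b<1$ up to a countable union and a null set, which affects neither $\dim_\delta(E)$ nor the codimension. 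This places us in the setting of Theorem \ref{Hitting proba} with $\mathbf{(C_{0+})}$ and Hypothesis \ref{Hyp2} in force, and in particular both \eqref{lower bnd hitting} and \eqref{upper bnd hitting}–\eqref{weak uppr bnd hitting} are available (noting that $\mathbf{(C_{0+})}$ gives $\mathbf{(C_\varepsilon)}$ for every $\varepsilon\in(0,1)$, and we will use it with $\varepsilon$ small).

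For the ``$=0$'' direction, suppose $\dim_{\operatorname{euc}}(F)<(d-\dim_\delta(E))\vee 0$; we may assume $\dim_\delta(E)<d$, otherwise the bound is vacuous. Fix $\varepsilon>0$ small enough that $\dim_{\operatorname{euc}}(F) + \varepsilon d < d - \dim_\delta(E)$, equivalently $\dim_\delta(E)+\dim_{\operatorname{euc}}(F) < d(1-\varepsilon)$. By Lemma \ref{lem dim product}, the hypothesis $\dim_\delta(E)=\overline{\dim}_{\delta,M}(E)$ forces $\dim_{\rho_\delta}(E\times F)=\dim_\delta(E)+\dim_{\operatorname{euc}}(F) < d(1-\varepsilon)$, so $\mathcal{H}_{\rho_\delta}^{d(1-\varepsilon)}(E\times F)=0$, and the weak upper bound \eqref{weak uppr bnd hitting} of Theorem \ref{Hitting proba}(iii) gives $\mathbb{P}\{X(E)\cap F\neq\varnothing\}=0$. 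This shows $\underline{\operatorname{codim}}(X(E))\geq (d-\dim_\delta(E))\vee 0$. For the ``$>0$'' direction, suppose $\dim_{\operatorname{euc}}(F)>(d-\dim_\delta(E))\vee 0$, so $\dim_\delta(E)+\dim_{\operatorname{euc}}(F)>d$; again by Lemma \ref{lem dim product} (using the Minkowski equality hypothesis, which yields $\dim_{\rho_\delta}(E\times F)=\dim_\delta(E)+\dim_{\operatorname{euc}}(F)$), we get $\dim_{\rho_\delta}(E\times F)>d$, hence $\mathcal{C}_{\rho_\delta}^d(E\times F)>0$ by \eqref{altern dim rho_delta}, and the lower bound \eqref{lower bnd hitting} of Theorem \ref{Hitting proba}(i) gives $\mathbb{P}\{X(E)\cap F\neq\varnothing\}>0$. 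This shows $\overline{\operatorname{codim}}(X(E))\leq (d-\dim_\delta(E))\vee 0$. Combining with the trivial inequality $\underline{\operatorname{codim}}\leq\overline{\operatorname{codim}}$ gives the common value $(d-\dim_\delta(E))\vee 0$.

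The only genuinely delicate point is the reduction to bounded $E$ and $F$ so that the quantitative estimates of Theorem \ref{Hitting proba}, which are stated for $E\subset[a,b]$ with $0<a<b<\infty$ and $F\subset[-M,M]^d$, may be invoked: one must check that writing $E=\bigcup_k (E\cap[1/k,1-1/k])$ and using countable subadditivity of the events $\{X(E\cap[1/k,1-1/k])\cap F\neq\varnothing\}$ together with a.s. boundedness of $X$ on $[0,1]$ does not disturb the dimension bookkeeping, and in particular that $\dim_\delta$ and $\overline{\dim}_{\delta,M}$ of $E$ are unchanged after removing the endpoints (the Minkowski-regularity hypothesis $\dim_\delta(E)=\overline{\dim}_{\delta,M}(E)$ must be seen to descend to a relevant bounded subset, which is where a small amount of care is needed since upper Minkowski dimension is only finitely stable). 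Everything else is a direct application of results already established in the excerpt.
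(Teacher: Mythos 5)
Your proof is correct and follows essentially the same route as the paper: the paper packages your two implications (the capacity lower bound from Theorem \ref{Hitting proba}(i) and the Hausdorff-measure upper bound from Theorem \ref{Hitting proba}(iii)) into Corollary \ref{polarity in terms of dim of ExF}, combines it with Lemma \ref{lem dim product} exactly as you do, and then reads off the codimension from \eqref{summary codim} via an explicit case split on whether $\dim_{\delta}(E)$ lies in $(0,d)$, is $\geq d$, or equals $0$, rather than your unified $\vee\,0$ bookkeeping. Your extra care with the reduction to $E\subset[a,b]$ and $F\subset[-M,M]^d$ addresses a point the paper passes over silently, and it does go through by monotonicity of $\overline{\dim}_{\delta,M}$ under restriction together with countable stability of $\dim_{\delta}$ and of the hitting events.
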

\begin{proof}
First, using Corollary \ref{polarity in terms of dim of ExF} and Lemma \ref{lem dim product} we obtain that
\begin{align}\label{codim under C_0+ and regular E}
    \mathbb{P}\left\{X(E)\cap F\neq \varnothing \right\}\left\{\begin{array}{ll}
>0 & \text { if } \dim_{\operatorname{euc}}(F)>d-\dim_{\delta}(E) \\
=0 & \text { if } \dim_{\operatorname{euc}}(F)<d-\dim_{\delta}(E)
\end{array}\right..
\end{align}
If $0<\dim_{\delta}(E)<d\,$ then \eqref{codim under C_0+ and regular E} ensures immediately that $\operatorname{codim}(X(E))=d-\dim_{\delta}(E)$. On the other hand, if $\dim_{\delta}(E)\geq d$ then \eqref{codim under C_0+ and regular E} implies that $\mathbb{P}\left\{X(E)\cap F\neq \varnothing\right\}>0$  for all $F\subset [-M,M]^d$ with $\dim_{\operatorname{euc}}(F)>0$, which means that $\overline{\operatorname{codim}}(X(E))=0$. Remains the case when $\dim_{\delta}(E)=\overline{\dim}_{\delta,M}(E)=0$, for which \eqref{codim under C_0+ and regular E} provides that $\mathbb{P}\left\{X(E)\cap F\neq \varnothing\right\}=0$  for all $F\subset [-M,M]^d$ with $\dim_{\operatorname{euc}}(F)<d$. This implies that $\underline{\operatorname{codim}}(X(E))=d$. Hence the proof of \eqref{cor on codim img} is then complete.      
\end{proof}

We finish our paper with a discussion and a conjecture of what may happen when the mild regularity condition \textbf{$\mathbf{(C_{0+})}$} fails to hold. The method of Theorem \ref{Hitting proba} leads to a lack of information on hitting probabilities estimates when that condition fails. For instance, in the logBm scale, i.e. when $\delta(t,s)\asymp \log^{-\beta}(1/|t-s|)$ for some $\beta>1/2$, the method Subsection \ref{1st subsec of 4 sec} leads to a lower bound of $\mathbb{P}\left\{X(E)\cap F\neq \varnothing \right\}$ in terms of the $\rho_{\delta}$-capacity of $E\times F$ with order $d$, and to an upper bound in terms of the $\rho_{\delta}$-Hausdorff measure of $E\times F$ with order $d(1-1/2\beta)$. Namely we have 
\begin{align}
c_1^{-1}\mathcal{C}_{\rho_{\delta}}^d(E\times F)\,  \leq \mathbb{P}\left\{X(E)\cap F\neq \varnothing \right\}\leq c_1\, \mathcal{H}^{d(1-1/2\beta)}_{\rho_{\delta}}(E\times F),
\end{align}
which implies that   
\begin{align}\label{critical log case}
    \mathbb{P}\left\{X(E)\cap F\neq \varnothing \right\}\left\{\begin{array}{ll}
>0 & \text { if } \dim_{\rho_{\delta}}(E\times F)>d \\
=0 & \text { if } \dim_{\rho_{\delta}}(E\times F)<d(1-1/2\beta)
\end{array}\right..
\end{align}
If $E$ is Ahlfors-David regular, by Lemma \ref{lem dim product} we have $\dim_{\rho_{\delta}}(E\times F)=\dim_{\delta}(E)+\dim_{\operatorname{euc}}(F)$. Therefore \eqref{critical log case} takes the following form
\begin{align}\label{critical log case 2}
    \mathbb{P}\left\{X(E)\cap F\neq \varnothing \right\}\left\{\begin{array}{ll}
>0 & \text { if } \dim_{\operatorname{euc}}(F)>d-\dim_{\delta}(E) \\
=0 & \text { if } \dim_{\operatorname{euc}}(F)<d-\dim_{\delta}(E)-d/2\beta
\end{array}\right..
\end{align}
When combining \eqref{def lwr codim}, \eqref{def uppr codim} and \eqref{critical log case 2} we get that 
$$
\overline{\operatorname{codim}}\left(X(E)\right)\leq d-\dim_{\delta}(E) \quad \text{ and } \quad \underline{\operatorname{codim}}\left(X(E)\right)\geq d-\dim_{\delta}(E)-d/2\beta.
$$
On the other hand, it follows from \eqref{dim lower bnd} and \eqref{dim upr bnd logBM}, when $\dim_{\delta}(E)\leq d$, that  $\dim_{\delta}(E)$ and $\dim_{\delta}(E)+d/2\beta$ are lower and upper bounds for $\dim_{\operatorname{euc}} X(E)$, respectively.
Moreover Theorem \ref{0-1 law for image}, {which holds without any regularity assumptions on the standard deviation function $\gamma$},
ensures that $\dim_{\operatorname{euc}}X(E)=\mathbf{c}(E)$  a.s., where $\mathbf{c}(E)$ is a non-random constant depending only on $E$ and on the law of $X$.
Thus in the case of logBm, the constant $\mathbf{c}(E)$ lives in the interval $[\dim_{\delta}(E)\,,\,\dim_{\delta}(E)+d/2\beta]$, which becomes an increasingly precise estimate as one approaches the regularity realm of Condition \textbf{$\mathbf{(C_{0+})}$}. However, we conjecture that the constant $\mathbf{c}(E)$, whose value is unknown for highly irregular processes beyond that realm, is nonetheless directly connected to the image's stochastic codimension. In other words we conjecture the following. 
\begin{conjecture}
Let $X$ be as in Theorem \ref{0-1 law for image}. Let $E\subset [0,1]$ be a Borel set such that $\dim_{\delta}(E)=\overline{\dim}_{\delta,M}(E)\leq \mathbf{c}(E)\leq d$, where $\mathbf{c}(E)$ was defined in that theorem as the almost sure value of $\dim_{\operatorname{euc}}X(E)$. Then 
\begin{align}\label{critical log case 1}
    \mathbb{P}\left\{X(E)\cap F\neq \varnothing \right\}\left\{\begin{array}{ll}
>0 & \text { if } \dim_{\operatorname{euc}}(F)> d- \mathbf{c}(E)\,,\\
=0 & \text { if } \dim_{\operatorname{euc}}(F)< d- \mathbf{c}(E)\,.
\end{array}\right.
\end{align}
In other words, we have the following formula for the stochastic codimension of $X(E)$:
$$
\operatorname{codim}\left( X(E)\right)=d-\mathbf{c}(E).
$$
\end{conjecture}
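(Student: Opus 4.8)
\medskip
\noindent\textbf{A possible route toward the conjecture, and the main obstruction.}
This is a conjecture; what follows is a plan of attack together with an indication of where the difficulty lies. The natural idea is to reprove both halves of the displayed dichotomy by the capacity-and-chaining method of Theorem~\ref{Hitting proba}, but with the deterministic metric $\delta$ (equivalently, the function $\gamma$) replaced throughout by the \emph{random} kernel
\[
K_n(s,t,\omega)=\bigl(\delta_{1,n}(s,t)\vee\|X_{n,\infty}(s,\omega)-X_{n,\infty}(t,\omega)\|\bigr)^{-1}
\]
introduced in the proof of Theorem~\ref{0-1 law for image}. The reason is that $\mathbf{c}(E)$ is governed by $K_n$, not by $\gamma$: by \eqref{value dim image} one has $\mathbf{c}(E)=\zeta_n(E)\wedge d$ almost surely, where $\zeta_n(E)$ is the critical order for finiteness of $K_n$-energies of probability measures on $E$, see \eqref{random dimension}. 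Any argument based on $\gamma$ alone reaches the codimension level $d-\dim_{\delta}(E)$ at best, and since beyond Condition~$\mathbf{(C_{0+})}$ one generally expects $\dim_{\delta}(E)<\mathbf{c}(E)\le d$ (for logBm, $\dim_{\delta}(E)\le\mathbf{c}(E)\le\dim_{\delta}(E)+d/2\beta$), both of the inequalities $\overline{\operatorname{codim}}(X(E))\le d-\mathbf{c}(E)$ and $\underline{\operatorname{codim}}(X(E))\ge d-\mathbf{c}(E)$ would strictly improve on the deterministic estimates and hence genuinely require the finer random geometry.

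For the inequality $\overline{\operatorname{codim}}(X(E))\le d-\mathbf{c}(E)$, that is, positive hitting probability when $\dim_{\operatorname{euc}}(F)>d-\mathbf{c}(E)$, one would condition on $X_{n,\infty}=\omega_2$, exactly as in the proof of Theorem~\ref{0-1 law for image}. Choosing $\zeta<\mathbf{c}(E)$ with $\dim_{\operatorname{euc}}(F)>d-\zeta$, the definition of $\zeta_n(E)$ supplies a measure $\nu_{\omega_2}\in\mathcal{P}(E)$ of finite $K_n(\cdot,\cdot,\omega_2)$-energy of order $\zeta$, while Frostman's theorem supplies a measure on $F$ of finite Riesz energy of order $d-\zeta$. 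The crucial missing step is a \emph{product energy estimate}: the product of these two measures should have finite energy on $E\times F$ against the kernel $((s,x),(t,y))\mapsto\bigl(\delta_{1,n}(s,t)\vee\|X_{n,\infty}(s,\omega_2)-X_{n,\infty}(t,\omega_2)\|\vee\|x-y\|\bigr)^{-d}$. This is where the hypothesis $\dim_{\delta}(E)=\overline{\dim}_{\delta,M}(E)$ — and, plausibly, an Ahlfors-David-type regularity of $E$ relative to the random metric induced by $X_{n,\infty}$ — should enter, along the lines of Lemma~\ref{lem dim product} and \cite[Proposition~2.5]{Erraoui Hakiki 2}. Granting such a product estimate, the Paley-Zygmund second-moment argument of Theorem~\ref{Hitting proba}, run under the conditional law $\mathbb{Q}_{1,n}$ with the kernel comparison of \eqref{expect energy image}--\eqref{estim krnl 2nd case}, would give $\mathbb{Q}_{1,n}\{X(E)\cap F\neq\varnothing\}>0$ for $\mathbb{Q}_{n,\infty}$-almost every $\omega_2$, hence $\mathbb{P}\{X(E)\cap F\neq\varnothing\}>0$.

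For the polarity half, $\underline{\operatorname{codim}}(X(E))\ge d-\mathbf{c}(E)$, that is, no hitting when $\dim_{\operatorname{euc}}(F)<d-\mathbf{c}(E)$, the chaining scheme of \eqref{event covering} would cover $F$ by Euclidean balls $B(z_i,r_i)$ with $\sum_i(2r_i)^{\beta}$ small for some $\beta<d-\mathbf{c}(E)$, cover $E$ by $\delta$-balls of radii tuned to the $r_i$ (their number controlled by $\overline{\dim}_{\delta,M}(E)$), and bound $\mathbb{P}\{X(E)\cap F\neq\varnothing\}$ by a sum of small-ball probabilities $\mathbb{P}\{\inf_{t\in B_{\delta}(t_i,\rho_i)}\|X(t)-z_i\|\le r_i\}$. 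Here lies the obstruction, and the reason this is only a conjecture: the available small-ball estimate, Lemma~\ref{lem upper bound}, sees only $\gamma$ and produces $(r+f_{\gamma}(r))^{d}$, which when $\mathbf{(C_{0+})}$ fails is far larger than $r^d$ and yields no better than $\underline{\operatorname{codim}}(X(E))\ge d-\dim_{\delta}(E)-(\text{loss})$. To reach the sharp exponent $d-\mathbf{c}(E)$ one would need a \emph{conditional} small-ball estimate for $X$ given $X_{n,\infty}$ that exploits the same random geometry as $K_n$ — an upper bound matching, in the appropriate metric, the conditional capacity lower bound produced above — and one robust enough to survive the chaining over $F$. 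Proving such an estimate is the main difficulty, and it is precisely what is missing. A reasonable first target would be the logBm case with $E$ an interval; even the weaker claim that $\operatorname{codim}(X(E))$ merely \emph{exists} looks delicate, since, unlike $\dim_{\operatorname{euc}}X(E)$, the event $\{X(E)\cap F\neq\varnothing\}$ is not visibly measurable with respect to the Karhunen-Lo\`eve tail $\sigma$-field — adding a $\delta$-Lipschitz drift does alter whether the image meets $F$ — so Lemma~\ref{effect of Lipshitz drift} and the Kolmogorov zero-one law do not apply directly.
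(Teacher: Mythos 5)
The statement you were asked about is labeled a conjecture, and the paper offers no proof of it; the authors explicitly say they lack a strategy to resolve it. Your submission correctly declines to claim a proof, so there is nothing to check against the paper's (nonexistent) argument. What can be assessed is whether your roadmap is consistent with, and adds to, the heuristics the paper itself provides, and on that score your analysis is sound and in places sharper than the paper's own discussion.

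Your diagnosis of the two halves matches the paper's framing. The discussion preceding the conjecture derives, for logBm, $\overline{\operatorname{codim}}(X(E))\le d-\dim_{\delta}(E)$ and $\underline{\operatorname{codim}}(X(E))\ge d-\dim_{\delta}(E)-d/2\beta$, while $\mathbf{c}(E)$ is only located in $[\dim_{\delta}(E),\dim_{\delta}(E)+d/2\beta]$; so, as you say, any argument seeing only $\gamma$ cannot close the gap, and the conjecture genuinely requires the random geometry encoded in the kernels $K_n$ of Theorem \ref{0-1 law for image}. Your two concrete obstructions are well chosen: (i) the missing product-energy estimate needed to run the Paley--Zygmund argument conditionally on $X_{n,\infty}$, where the hypothesis $\dim_{\delta}(E)=\overline{\dim}_{\delta,M}(E)$ should plausibly enter as an Ahlfors--David-type regularity in a random metric; and (ii) the absence of a conditional small-ball estimate replacing Lemma \ref{lem upper bound}, whose bound $(r+f_{\gamma}(r))^d$ is exactly what causes the $d/2\beta$ loss when $\mathbf{(C_{0+})}$ fails. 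Your final observation — that $\{X(E)\cap F\neq\varnothing\}$ is not visibly measurable with respect to the Karhunen--Lo\`eve tail $\sigma$-field because a $\delta$-Lipschitz drift can change whether the image meets $F$, so even the existence of $\operatorname{codim}(X(E))$ does not follow from the zero-one law machinery of Section 3.2 — is a correct and worthwhile point that the paper does not make explicit. None of this constitutes a proof, but as a statement of what is missing it is accurate.
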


\bibliographystyle{abbrv}

\begin{thebibliography}{9}

\bibitem{Adler 2} R. J. Adler. \textit{An introduction to continuity, extrema, and related topics for general Gaussian processes}. IMS Lecture Notes-Monograph Series, Volume 12, (1990).
 
 
\bibitem{Bierme&Xiao} H. Biermé  and C. Lacaux  and Y. Xiao. Hitting probabilities and the Hausdorff dimension of the inverse images of anisotropic Gaussian random fields. \textit{Bull London Math. Soc.} 41, 253-273 (2009).
 
 
\bibitem{Bingham et al} N. H. Bingham and  C. M. Goldie and  J. L. Teugels. \textit{Regular Variation}. Cambridge University Press, Cambridge, (1987).




\bibitem{Dal-Khosh-Eulal 2007} R. Dalang and D. Khoshnevisan and E. Nualart.  
 Hitting probabilities for systems of non-linear stochastic heat equations with additive noise. \textit{Alea, Lat. Am. J. Probab. Math. Stat.} 3, 231–271 (2007). 

 \bibitem{Dal-Muel-Xia 2017} R. C. Dalang and C. Mueller and Y. Xiao. Polarity of points for Gaussian random fields. \textit{Ann. Probab.} 45(6B), 4700-4751 (2017).

\bibitem{Erraoui Hakiki 2} M. Erraoui and Y. Hakiki. Fractional Brownian motion with deterministic drift: How critical is drift regularity in hitting probabilities. available at https://arxiv.org/abs/2306.10922.



\bibitem{Erraoui&Hakiki2020} M. Erraoui and Y. Hakiki. Images of fractional Brownian motion with deterministic drift: Positive Lebesgue measure and non-empty interior. \textit{Math. Proc. Cambridge Phil. Soc.} 173 (3), 693-713 (2022). 






\bibitem{GULISASHVILI} A. Gulisashvili. Time-inhomogeneous Gaussian stochastic volatility models: Large deviations and super roughness, \textit{Stoch. Process. Appl.} 139, 37-79 (2021).

\bibitem{Har-Perk} F. A. Harang and N. Perkowski. $\mathcal{C}^{\infty}$-regularization of ODEs perturbed by noise. \textit{Stoch. Dyn.} 21, no 8, 2140010 (29 pages), (2021)

\bibitem{Hawkes}J. Hawkes. Local Properties of Some Gaussian Processes. \textit{Z. Wahrscheinlichkeitstheorie und Verw. Gebiete.} 40, 309–315 (1977).


\bibitem{Ho95} J. D. Howroyd. On dimension and on the existence of sets of finite positive Hausdorff measure. \textit{Proc. Lond. Math. Soc.} III. 70, 581–604 (1995).



\bibitem{Kahane} J. P. Kahane. \textit{Some random series of functions}. Second edition. Cambridge Studies in Advanced Mathematics, 5. Cambridge University Press, Cambridge, (1985).



\bibitem{Khosh} D. Khoshnevisan. \textit{Multiparameter processes. An introduction to random fields}.  Springer-Verlag, New York, (2002). 

\bibitem{Kohlbecker 58} E. E. Kohlbecker {Weak asymptotic properties of partitions}. \textit{Trans. Amer. Math. Soc.} 88, 346-365, (1958).


\bibitem{Mattila} P. Mattila. Geometry of Sets and Measures in Euclidean Spaces: Fractals and Rectifiability. Cambridge Studies in Advanced Mathematics. Cambridge University Press, Cambridge (1995).

\bibitem{Marcus-Rosen} M. Marcus and J. Rosen. \textit{Markov Processes, Gaussian Processes, and Local Times}. Cambridge Studies in Advanced Mathematics. Cambridge University Press, Cambridge (2006).

\bibitem{Viens&Mocioalca2005} O. Mocioalca and F. Viens. Skorohod integration and stochastic calculus beyond the fractional Brownian scale. \textit{J. Functional Analysis} 222 385-434 (2005).

\bibitem{Peres&Morters} P. Mörters and Y. Peres. \textit{Brownian motion.} Cambridge Series in Statistical and Probabilistic Mathematics, 30. Cambridge University Press, Cambridge, (2010).

\bibitem{Mueller-Tribe2002} C. Mueller and  R. Tribe. Hitting Properties of a Random String. \textit{Electron. J. Probab. 7} (none), 1-29 (2002).

\bibitem{NeufcourtViens} L. Neufcourt and F. Viens. A third-moment theorem and precise asymptotics for variations of stationary Gaussian sequences. \textit{ALEA, Lat. Am. J. Probab. Math. Stat.} 13, 239–264 (2016).

\bibitem{Eulalia&Viens2013} E. Nualart and F. Viens. Hitting probabilities for general Gaussian processes. available at https://arxiv.org/abs/1305.1758.


\bibitem{Ouahhabi&Tudor2013} H. Ouahhabi and C.A. Tudor. Additive functionals of the solution to fractional stochastic  heat equation. \textit{J. of Fourier Analysis and Application} 19, 777-791 (2013).

\bibitem{TorresTudorViens} S. Torres and C. Tudor and F. Viens. Quadratic variations for the fractional-colored stochastic heat equation. \textit{Electron. J. Probab.} 19, 1-51 (2014).

\bibitem{Rosenbaum et al} J. Gatheral and T. Jaisson and M. Rosenbaum.  Volatility is rough. \textit{Quantitative Finance} 18:6, 933-949 (2018).



\bibitem{Seneta} E. Seneta. \textit{Regularly varying functions}. Lecture Notes in Math.508, Springer-Verlag, (1976).



\bibitem{Tay&Wats}S. J. Taylor and N. A. Watson. A Hausdorff measure classification of polar sets for the heat equation. \textit{Math. Proc. Cambridge Philos. Soc.} 97, no. 2, 325--344 (1985).






\end{thebibliography}

\end{document}